\def\R{\mathbb R}
\def\Z{\mathbb Z}
\def\T{\mathbb T}
\def\N{\mathbb N}
\def\E{\mathbb E}
\def\K{\mathbb K}
\def\p{\mathbb P}
\def\P{\mathcal P}
\def\W{\mathcal W}
\def\F{\mathcal F}
\def\S{\mathcal S}
\def\SS{\mathbf S}
\def\D{\mathcal D}
\def\LL{\mathcal L}
\def\U{\mathbb U}
\def\Q{\mathcal Q}
\def\tt{\widetilde}
\def\hh{\widehat}
\def\d{{\,\rm{d}}}
\def\nn{\nabla}
\def\I{{\mathbf I}}
\def\Wlip{W^{1,\infty}}
\def\OP{{\rm OP}}
\def\B{\mathcal B}
\def\A{\mathcal A}
\def\H{\mathcal H}
\def\M{\mathcal M}
\def\TT{\mathcal T}
\def\ol{\overline}
\def\pas{{\mathbb P}\text{-}{\rm a.s.}}
\theoremstyle{plain}
\numberwithin{equation}{section}
\newtheorem{Theorem}{Theorem}[section]
\newtheorem{Proposition}{Proposition}[section]
\newtheorem{Lemma}{Lemma}[section]
\newtheorem{Definition}{Definition}[section]
\newtheorem{MResult}{Main Result}
\newtheorem{Step}{Step}
\newtheorem{Hypothesis}{Hypothesis}
\newenvironment{ManualHypo}[1]{%
\Assumption
}{\endAssumption}
\theoremstyle{definition}
\newtheorem{Remark}{Remark}[section]
\newcommand{\IP}[1]{\left\langle#1\right\rangle }
\newcommand{\bIP}[1]{\big\langle#1\big\rangle }
\newcommand{\norm}[1]{\left\|#1\right\|}
\title{{\bf On stochastic Euler-Poincar\'{e} equations driven by pseudo-differential/multiplicative noise}}
\author{{ \textbf{Hao Tang}}\\
\footnotesize{\it Department of Mathematics,
University of Oslo, P.O. Box 1053, Blindern, N-0316 Oslo, Norway}\\
\footnotesize{\it haot@math.uio.no}}
\begin{document}

\maketitle

\begin{abstract} 
In this paper we focus on the stochastic Euler-Poincar\'{e} equations with pseudo-differential/multiplicative noise. We first establish two new cancellation properties on pseudo-differential operators, which play a key role in energy estimate. Then, we obtain results on local solution, blow-up criterion and global existence. The interplay between stability on exiting times and continuous dependence of solution on initial data is also studied for the multiplicative noise case. 
\end{abstract} 
\medskip
\textbf{2020 AMS subject Classification:} Primary: 60H15, 35Q35; Secondary: 35A01, 35S10. 
\newline
\textbf{Keywords:} Stochastic Euler-Poincar\'{e} equations, Pseudo-differential noise, 
Blow-up/Non-explosion criterion, Stability, Non-uniform dependence

\vskip 1cm

\tableofcontents 

\section{Introduction and main results}

Let $\I$ be the identity operator. 
Consider the following Euler-Poincar\'{e} (\textbf{EP}) equations: 
\begin{equation}\label{EP}
\partial_t m+\left(u\cdot\nabla\right)m+(\nabla u)^T m+({\rm div}u)m=0,\ \ m=(\I-\alpha\Delta)u,
\end{equation}
where $u=(u_j)_{1\leq j\leq d}$ is the velocity, $m=(m_j)_{1\leq j\leq d}$ with $m_j = (\I-\alpha\Delta)u_j(t,x)$ represents the momentum, $\nabla u^T$ denotes the transpose of $\nabla u$ and $\alpha$ corresponds to the square of the length scale. 
The {\bf EP} equations \eqref{EP} were first studied by Holm et al. \cite{Holm-Marsden-Ratiu-1998-AM,Holm-Marsden-Ratiu-1998-PRL} as a higher dimensional Camassa-Holm (CH) system for modeling and analyzing the nonlinear shallow water waves, see also \cite{Holm-etal-2002-Chapter}.
When  $d\geq2$ and  $m>d/2+3$, we refer to  \cite{Chae-Liu-2012-CMP} for the local existence and uniqueness of a strong solution belonging to $H^m$. Blow-up phenomenon for the case $\alpha=0$ was also obtained in \cite{Chae-Liu-2012-CMP}. For the case $\alpha>0$, the blow-up and global existence of the solutions to \eqref{EP} were studied in \cite{Li-Yu-Zhai-2013-ARMA}. 
For convenience, in this paper we assume $\alpha=1$ in \eqref{EP}.

Now we rewrite \eqref{EP} into the following form (see \cite{Chae-Liu-2012-CMP,Yan-Yin-2015-DCDS,Zhao-Yang-Li-2018-JMAA} for the computation):
\begin{align}\label{EP-u}
u_t+(u\cdot \nabla) u+F(u) =0,
\end{align}
where 
\begin{equation}\label{F-EP}
\left\{\begin{aligned}
F(u)&=(\I-\Delta)^{-1} {\rm div}F_{1}(u)+(\I-\Delta)^{-1}F_{2}(u),\\
F_{1}(u)&= \nabla u (\nabla u +\nabla u^{T})-\nabla u^{T}\nabla u
-\nabla u ({\rm div }u)+\frac{1}{2}I_{d\times d}|\nabla u|^{2},\\
F_{2}(u)&=u({\rm div }u)+u\cdot \nabla u^{T}.
\end{aligned} \right.
\end{equation}
In \eqref{F-EP}, $I_{d\times d}$ is the $d\times d$ identity matrix and $f=(\I-\Delta)^{-1}g$ means $f=G*g$ with the Green function $G$ for the Helmholtz operator $\I-\Delta$. 

\subsection{Pseudo-differential/multiplicative noise}


The introduction of stochasticity into fluid
PDEs has received special attention over the past two decades. The
additional stochastic noise can be a way of representing model uncertainty
and turbulence. For instance, phenomena in weather forecast  including cloud
formation are to this day poorly understood and the inclusion of stochastic
noise has become an essential tool for gaining better understanding about
it. In this paper, we are interested in the following stochastic version of \eqref{EP-u}:
\begin{equation}\label{SEP-u}
\d u+[(u\cdot \nabla) u+F(u)]\d t
= \sum_{k=1}^{\infty}\left(\Q_ku\circ\d\tt W_k + h_k(t,u)\d W_k\right),
\end{equation}
where $\{(\tt W_k, W_k)\}_{k\ge1}$ is a family of independent 1-D Brownian motions, $\{\Q_k\}_{k\ge1}$ is a sequence of pseudo-differential operators (see Section \ref{Section-notations} for details), 
$\{h_k(\cdot,\cdot)\}_{k\ge1}$ is a sequence of nonlinear functions, $\circ\d\tt W_k$ is the Stratonovich stochastic differential and ${\rm d} W_k$ is the It\^o stochastic differential. 

We note the form \eqref{SEP-u} extends many recent results.  If $$\Q_k\equiv 0,\ k\ge1,\ \ \sum_{k=1}^{\infty}\| h_k\|^2_{H^s}<\infty \ \text{for all}\ s>0,$$
where $H^s$ is the Sobolev space with regularity index $s$ (see Section \ref{Section-notations} for precise definition), 
then \eqref{SEP-u} in 1-D becomes the stochastic CH equation investigated in \cite{Tang-2018-SIMA,Rohde-Tang-2021-NoDEA,Rohde-Tang-2020-JDDE}:
\begin{align*}
\d u+\left[uu_{x}+(\I-\partial^2_x)^{-1}\partial_x\left(u^2+\frac12 (\partial_xu)^2\right)\right]\d t=\,&\sum_{k=1}^{\infty} h_k (t,u)\d W_k.
\end{align*}
For the closely related stochastic models, we refer to \cite{Chen-Duan-Gao-2021-PhyD,Zhang-2020-SPA,Rohde-Tang-2020-JDDE,Holden-Karlsen-Pang-2021-JDE,Chen-Duan-Gao-2021-CMS,Tang-Yang-2022-AIHP,Zhang-2020-SPA}. 
If 
$$ h_k\equiv 0,\ \ 
\Q_k=(\I-\partial^2_{x})^{-1}\left\{c_k(\I-\partial^2_{x})
+d_k\partial_x(\I-\partial^2_{x})\right\},\ \ k\ge1$$ for some nice functions ${c_k(x),d_k(x)}$, then \eqref{SEP-u} in 1-D reduces to the stochastic Camassa-Holm (CH) equation with transport noise:
\begin{equation*}
\d m+(um_{x}+2u_{x}m)\d t=\sum_{k=1}^{\infty}(c_km+d_k\partial_xm)\circ\d \tt W_k,\ \ \ \ m=u-u_{xx},
\end{equation*}
which has been studied recently in \cite{Albeverio-etal-2021-JDE}. We also refer to \cite{Alonso-Rohde-Tang-2021-JLNS,Holden-Karlsen-Pang-2021-JDE,Holden-Karlsen-Pang-2022-arXiv} for similar models. 

Here we remark that, as a sequence of pseudo-differential operators, $\Q_k$ considerably extends the well-known transport noise. 
For almost all the known results, the transport noise coefficient can be formulated as:
\begin{equation}\label{known transport noise}
\Q_k=c_k\I+\sum_{j=1}^d d_{k,j}\partial_{x_j}
\end{equation}
for some smooth functions $c_k=c_k(x),d_{k,j}=d_{k,j}(x):\R^d\to \R$ ($1\leq j\leq d$, $k\ge1$). We refer to \cite{Flandoli-Gubinelli-Priola-2010-Invention,Crisan-Holm-2018-PhyD,Crisan-Flandoli-Holm-2018-JNS,Alonso-etal-2019-NODEA,Holden-Karlsen-Pang-2021-JDE,Flandoli-Luo-2020-AoP,Flandoli-Luo-2021-PTRF,Lang-Crisan-2022-SPDE,Albeverio-etal-2021-JDE,Alonso-Bethencourt-2020-JLNS,Alonso-Rohde-Tang-2021-JLNS} and the references therein for different examples with transport noise. 

In the following, we consider the case that $\{\Q_k\}_{k\ge1}$ is a sequence of pseudo-differential operators (see Section \ref{Section-notations} for precise definition) generalizing \eqref{known transport noise}. In this case,  even the problem how to close the {\it a prior} estimate in $H^s$ is 
non-trivial. To see this, rewriting \eqref{SEP-u} into It\^o's form (see \eqref{Cauchy problem-Ito}), and then applying the It\^o formula to $\norm{u}^2_{H^s}$, we will be confronted with the following two terms:
$$\sum_{k=1}^{\infty}\IP{\Q_ku,u}_{H^s} \d \tt W_k\ \text{and}\ \sum_{k=1}^{\infty}\left[\IP{\Q_k^2 u,u}_{H^s}+\IP{\Q_ku,\Q_ku}_{H^s}\right]\d t,$$
which are {\it a prior} singular in terms of $H^s$ since derivatives of order more than $s$ is involved. However, to close the {\it a prior} estimate for $\norm{u}^2_{H^s}$, one has to control the above two terms by ${H^s}$-norm of $u$. 
Such estimates are called cancellation of singularities. And the first main result in this work is
\begin{MResult}[see Theorems \ref{Cancel-Bk} and \ref{Cancel-Ak} for the precise statement]
Cancellation of singularities for two classes of pseudo-differential operators $\{\Q_k\}_{k\ge 1}$, i.e.,
\begin{equation}
\sum_{k=1}^\infty\bIP{\mathcal{P}_n\Q_k f, \mathcal{P}_nf}^2_{L^2}\lesssim\norm{f}^4_{H^s},\ \ 
\sum_{k=1}^\infty\Big|\bIP{ \P_n \Q_k^{2}f, \P_n f }_{L^2}
+ \bIP{\P_n \Q_k f, \P_n \Q_k f }_{L^2} \Big|\lesssim \norm{f}_{H^s}^2,\label{cancel tatget}
\end{equation}
where $\{\P_n\}_{n\ge1}\subset\OP\S^s$ is bounded and $f$ is sufficiently regular.
\end{MResult}

Even though there are already many papers on the existence and uniqueness of solutions to different kinds of SPDEs, as far as we know, there is almost no result on SPDEs with coefficients involving pseudo-differential operators 
except  the recent one \cite{Tang-Wang-2022-arXiv}.  
The above result in this work further extends the cancellation properties in \cite{Tang-Wang-2022-arXiv}. 
With the above cancellation properties, we can obtain the second result in the paper. Roughly speaking, we obtain
\begin{MResult} [See Theorem \ref{Thm-EP} for the detailed statement]
Existence, uniqueness, blow-up criterion  and global existence of solutions to the following Cauchy problem of the stochastic {\bf EP} equations \eqref{SEP-u}, 
\begin{equation}\label{Cauchy problem-S}
\d u+[(u\cdot \nabla) u+F(u)]\d t=\,\sum_{k=1}^{\infty}\left({\mathcal Q}_ku\circ \d \tt W_k+h_k(t,u)\d W_k\right),
\ \ u\big|_{t=0}=\,u_{0}, \ \ x\in \R^d \ \text{or}\ \T^d:=(\R/\Z)^d,
\end{equation}
when $\Q_k$ and $h_k$ satisfy certain conditions.  
\end{MResult}

\subsection{Dependence on the initial data}

For SPDEs, noise effect is one of the probabilistically important questions worthwhile to study and many regularization effects have been observed. We refer to \cite{Flandoli-Luo-2020-AoP,Flandoli-Luo-2021-PTRF,Flandoli-Gubinelli-Priola-2010-Invention,Kroker-Rohde-2012-ANM,Rohde-Tang-2021-NoDEA,Tang-Yang-2022-AIHP,Kim-2010-JFA} and the references therein. 
In this paper, we will consider this noise effect on \eqref{Cauchy problem-S} associated with the dependence on the initial data. 

According to Hadamard, the classical notion of well-posedness of an abstract Cauchy problem requires the existence, uniqueness and the continuous dependence of such solution on initial data. For nonlinear stochastic evolution equations, the property of dependence on initial conditions turns out to be a problem much more complicated than linear growth case or deterministic case. This is because the solution may only exist up to some random time interval $[0,\tau)$ and in general we do \textit{not} have estimates on such $\tau$, cf. \cite{GlattHoltz-Vicol-2014-AP,Rohde-Tang-2020-JDDE,Tang-Yang-2022-AIHP}. 
Indeed, as we will see in Definition \ref{pathwise solution definition}, in stochastic case the solution is a pair $(u,\tau)$. Therefore the dependence on initial conditions is more complicated since we will be confronted with $u_0\mapsto(u,\tau)$ rather than 
$u_0\mapsto u$ on some $[0,\tau]$.

In contrast to most of the previous works where the effects of noise are considered in terms of the regularity or uniqueness of solutions, in this work we consider the dependence on initial conditions.
The question whether (and how) noise can affect initial-data dependence becomes interesting by comparing noise and Laplacian: On one hand,
``regularization by noise'' may formally be related to the regularization produced by an additional Laplacian; On the other hand, if one can indeed add a Laplacian to the governing equations in some cases, then the dependence on initial data may be improved. For example, for the deterministic Euler equations, the dependence on initial data cannot be better than continuous \cite{Himonas-Misiolek-2010-CMP}, but for the deterministic Navier-Stokes equations, it is at least Lipschitz, see pp. 79--81 in \cite{Henry-1981-book}. So far we have not been able to identify the effect from ${\mathcal Q}_ku\circ \d \tt W_k$, hence we consider the following case of \eqref{Cauchy problem-S}:
\begin{equation}\label{EP non stable Eq}
{\rm d}u+\left[\left(u\cdot\nabla\right) u+F(u)\right]{\rm d}t
=\sum_{k=1}^\infty h_k(t,u){\rm d}W_k,\quad u\big|_{t=0}=u_{0}.
\end{equation}

The second result in this paper can be roughly stated as follows:

\begin{MResult}[The detailed statement is  in Theorem \ref{SEP non uniform}]
The solution map $u_0\mapsto (u,\tau)$ defined by \eqref{EP non stable Eq} on $\T^d$ is weakly unstable in the sense that:
\begin{itemize}
\item Either the exiting time of solution $u\equiv0$ is not strongly stable $($see Definition {\rm\ref{Stability on exiting time}}$)$;
\item Or the dependence on initial data is not uniformly continuous.
\end{itemize}
\end{MResult}

\subsection{Plan and remark on the paper} 
\begin{itemize}
\item  In Section \ref{Preliminaries}, we introduce notations and precise the definition of solutions.

\item In Section \ref{Sect:cancel}, we establish the cancellation properties \eqref{cancel tatget} for two board classes of $\Q_k$ not far away from
anti-symmetric. Even from the aspect of pure analysis in pseudo-differential  operators, the results are new, as far as we know.
In Theorem \ref{Cancel-Bk}, the operators depend on $x$ with order $\beta\in[0,1]$. In Theorem \ref{Cancel-Ak}, the operators are independent of $x$ with order $\alpha\ge0$.  Section \ref{Sect:cancel} is ended by Remark \ref{Remark-Bk cancel} and Lemma \ref{Bk-Example-Lemma}, where  we provide some explanations and examples regarding the hypotheses on the operators.

\item We prove existence, uniqueness, blow-up criterion  and global existence of solutions to  \eqref{Cauchy problem-S} (cf. Theorem \ref{Thm-EP}) in Section \ref{Exsitence of regular solutions}. It is worthwhile mentioning that the
conditional expectation $\E[\cdot|\F_0]$ will be used to replace the expectation $\E$ in the construction of solutions and hence we do not assume any moment condition on initial data.
It seems that conditional expectation has been rarely used in the literature of SPDEs. Besides, we remark that the proof for Theorem \ref{Thm-EP} does \textit{not} require any compactness on Sobolev embeddings, which is needed in well-known martingale approach (cf. Prokhorov's Theorem and Skorokhod's Theorem). Hence Theorem \ref{Thm-EP}  holds true not only on torus $\T^d$ but also on the whole space $\R^d$.

\item We study the noise effect on the solution map in Section \ref{Non uniform section}. Our main result is stated in Theorem \ref{SEP non uniform}, which tells us that the multiplicative noise (in It\^o's sense) cannot improve both the stability of the exiting time and  the continuity of the dependence on initial data simultaneously. 
Results of this type seem
to experience less attention in the literature of SPDEs.  

\item Some necessary estimates employed in the proofs are formulated and proved in Appendix \ref{Section:Appendix}.
\end{itemize}

\section{Notations and definitions}\label{Preliminaries}

\subsection{Notations}\label{Section-notations} 

To begin with, we list some notations used subsequently. 
Let $\K=\R$ or $\T:=\R/\Z$ and $d,m\in \mathbb N$. For $1\leq p<\infty$, we denote by
$L^p(\K^d;\R^m)$ the standard Lebesgue space of measurable $p$-integrable $\R^m$-valued functions with domain $\K^d$, and we let $L^\infty(\K^d;\R^d)$ be the space of essentially bounded functions. Particularly, $L^2(\K^d;\R^m)$ has an inner product 
$$
\IP{f,g}_{L^2}:=\sum_{i=1}^m\int_{\K^d}f_i\cdot\overline{g}_i\d x,
$$
where $\overline{g}$ denotes the complex conjugate of $g$. If there is no ambiguity, in the following we denote by $\IP{f,g}_{L^2}$ the inner product for both $f,g\in L^2(\K^d;\R^m)$ and $f,g\in L^2(\K^d;\R)$ with
the customary abuse of notation.

Let   ${\rm i}=\sqrt{-1}$ be the imaginary unit. 
The Fourier transform and inverse Fourier transform are defined by
$$(\mathscr Ff)(\xi):=\int_{\mathbb R^d}f(x){\rm e}^{-{\rm i}(x\cdot \xi)}{\rm d}x,\ \ 
(\mathscr F^{-1}f)(x)= \frac{1}{(2\pi)^{d} }\int_{\mathbb R^d}f(\xi){\rm e}^{{\rm i} (x\cdot \xi)}{\rm d}\xi,\ \ x,\xi\in\mathbb R^d.$$ 
On torus, i.e., $x\in\T^d$, the Fourier and inverse Fourier transforms are defined as
$$(\mathscr Ff) (k):=(2\pi)^d\int_{\mathbb T^d}f(x){\rm e}^{-2\pi {\rm i}(x\cdot k)}{\rm d}x,\ \ 
(\mathscr F^{-1}f)(x)=  \sum_{k\in\Z^d}f(k){\rm e}^{2\pi {\rm i} (x\cdot k)},\ \ x\in\mathbb T^d,\ k\in\Z^d.$$
We remark that the factor  $2\pi$ appears here to guarantee the periodicity of $f(x)$, which can be dropped if we take $\T=\R/(2\pi\mathbb Z)$ instead of $\R/\mathbb Z$. 

Recall that $\I$ stands for the identity mapping.
For any $s\in\R$, the operator $\D^s=(\I-\Delta)^{s/2}$
is defined by
$$\D^s:=\mathscr F^{-1}\left((1+|\cdot|^2)^{\frac s 2}\I\right)\mathscr F.$$
For $s\ge0$, $d,m\ge1$, the Sobolev spaces $H^s$ on $\K^d$ with values in $\R^m$ are defined  as 
$$H^s(\mathbb R^d;\R^m):=\ol{C_0^\infty(\mathbb R^d;\R^m)}^{\|\cdot\|_{H^s}},\ \
H^s(\mathbb T^d;\R^m):=\ol{C^\infty(\mathbb T^d;\R^m)}^{\|\cdot\|_{H^s}},\ \
\|f\|_{H^s}:=\sqrt{\IP{f,f}_{H^s}},$$
where
$$
\IP{f,g}_{H^s}:=\sum_{i=1}^m\IP{\D^s f_i,\D^s g_i}_{L^2}.$$
If  $d,m\in\mathbb N$ are fixed in the context, for $s\in\R$, $p\in [1,\infty]$   we will simply write
$$H^s=H^s(\mathbb K^d;\R^m),\ \ W^{1,\infty}=W^{1,\infty}(\mathbb K^d;\R^m), \ \ L^p=L^p(\mathbb K^d;\R^m),\ \ s\in\R,\ \ p\in [1,\infty],$$ 
where $W^{1,\infty}(\mathbb K^d;\R^m)$ is the set of weakly differential functions $f:\mathbb K^d\to\R^m$ such that 
$$\|f\|_{W^{1,\infty}}:= \sum_{j=1}^m\sum_{|\alpha|_1=0,1}\|\partial_x^{\alpha} f_j\|_{L^\infty} <\infty.$$

Let $\N_0^d:=(\N\cup\{0\})^d$. For two multi-indexes $\alpha=(\alpha_1,\cdots,\alpha_d),\, \beta=(\beta_1,\cdots,\beta_d)\in \N_0^d$ 
with $\beta\le\alpha$ (which means $\beta_i\leq \alpha_i$ with $1\le i\le d$),  we define
$$|\alpha|_1:=\sum_{k=1}^d \alpha_k,\ \ \partial_x^\alpha:= \prod_{k=1}^d \partial_{x_k}^{\alpha_k},\ \ \partial_{\xi}^\alpha:= \prod_{k=1}^d \partial_{\xi_k}^{\alpha_k},\ \ 
\left(\begin{matrix}\alpha \\ \beta\end{matrix}\right):=\prod_{i=1}^d\left(\begin{matrix}\alpha_{i} \\ \beta_{i}\end{matrix}\right)
=  \prod_{i=1}^d\frac{\alpha_i!}{\beta_i!\cdot(\alpha_i-\beta_i)!} .$$
Then for any $s\in \R$, we define the symbol class $\SS^s(\R^d\times \R^d)\subset C^\infty (\R^d\times \R^d;\mathbb{C}^{m\times m})$ as 
\begin{equation}\label{Ss define}
\SS^s(\R^d\times \R^d;\mathbb{C}^{m\times m}):=\left\{\mathfrak{p} \ : \
\forall\, \alpha,\beta\in \N_0^d,\ \exists\, C(\alpha,\beta)>0\ \text{such that}\ \frac{\left|\partial_{x}^{\beta} \partial_{\xi}^{\alpha} 
\mathfrak{p} (x, \xi)\right|_{m\times m}}{(1+|\xi|)^{ s-|\alpha|_1}}<\infty\right\}.
\end{equation} 
Here and in the sequel, $|\cdot|_{m\times m}$ and $|\cdot|$ are usual norms in $\mathbb C^{m\times m}$ and $\R^d$, respectively.  It is well-known that $\SS^s(\R^d\times \R^d;\mathbb{C}^{m\times m})$ is a Fr\'{e}chet space equipped with the topology generated by seminorms 
$\{|\cdot|^{\alpha,  \beta;s}_{\R^d}\}_{\alpha,  \beta\in \N_0^d}$, where
$$|\mathfrak{p}|^{\alpha,  \beta;s}_{\R^d}:=\sup_{(x,\xi)\in\R^d\times \R^d}\frac{|\partial_{x}^{\beta} \partial_{\xi}^{\alpha} {\mathfrak{p}}(x, \xi)|_{m\times m}}{(1+|\xi|)^{ s-|\alpha|_1}}.$$

For any $\alpha\in \mathbb N_0^d$, the   partial difference operator $\triangle^\alpha$ for   a function  $g:\mathbb{Z}^{d}\to \mathbb{C}$ is given by 
$$  (\triangle^\alpha g)(k) = \triangle^\alpha_{k}g(k):=\displaystyle\sum_{\gamma\in\N_0^{d}, \gamma\leq \alpha} (-1)^{|\alpha-\gamma|_1}\binom{\alpha}{\gamma}g(k+\gamma),\ \ k\in \mathbb Z^d.$$
Then   the (toroidal) symbol class of order $s$ for $s\in\R$ is  defined as (cf. \cite{Ruzhansky-Turunen-2010-Book}):
\begin{equation} 
\SS^s(\T^d\times \Z^d;\mathbb{C}^{m\times m}):=\left\{\mathfrak{p} \ : \
\begin{aligned}
&\mathfrak{p}(\cdot,k)\in  C^\infty (\T^d;\mathbb{C}^{m\times m}) \ \text{for all}\ k\in\Z^d;\\
\forall\, \alpha,\beta\in \N_0^d,\ &\exists\, C(\alpha,\beta)>0\ \text{such that}\ \frac{\left|\partial_{x}^{\beta}\triangle_{k}^{\alpha} 
\mathfrak{p} (x, k)\right|_{m\times m}}{(1+|k|)^{ s-|\alpha|_1}}<\infty
\end{aligned}
\right\}
\end{equation} 
Again,
this is a Fr\'{e}chet space under the topology generated by seminorms $\{|\cdot|^{\alpha,  \beta;s}_{\T^d}\}_{\alpha,  \beta\in \N_0^d}$ with 
$$|\mathfrak{p}|^{\alpha,\beta;s}_{\T^d}:=\sup_{(x,k)\in \T^d\times \Z^d}  \frac{|\partial_{x}^{\beta}\triangle_{k}^{\alpha}  \mathfrak{p}(x, k)|}{(1+|k|)^{ s-|\alpha|_1}}.$$ 
Then the pseudo-differential operator  with symbol $\mathfrak{p}$ is defined by
\begin{equation}\label{OP define}
\OP(\mathfrak{p}):=\P,\ \ [\P f](x):= 
\left\{
\begin{aligned}
&\frac{1}{(2\pi)^d} \int_{\mathbb R^d} \mathfrak{p}(x,\xi) (\mathscr Ff)(\xi) {\rm e}^{ {\rm i} (x \cdot \xi)} \d \xi,\ \ \text{if}\ \mathfrak{p}(x,\xi)\in \SS^s(\R^d\times \R^d;\mathbb{C}^{m\times m}),\\
&\sum_{k\in \Z^d} \mathfrak{p}(x,k) 
(\mathscr Ff)(k){\rm e}^{2\pi {\rm i}(x\cdot k)},\qquad \quad  \text{if}\ \mathfrak{p}(x,k)\in\SS^s(\T^d\times \Z^d;\mathbb{C}^{m\times m}).
\end{aligned}\right.
\end{equation}
Throughout this paper, all pseudo-differential operators are assumed to be real-valued,  i.e., when $f$ is real,  $[\OP({\mathfrak{p}})f]$ is also real. Equivalently, it is  required that
\begin{equation*}
\mathfrak{p}(x,-\xi)=\overline{ \mathfrak{p}(x,\xi)}\ \ \text{if}\ \
(x,\xi)\in\R^d \times \R^d\ \ \text{and}\ \ \mathfrak{p}(x,-k)=\overline{ \mathfrak{p}(x,k)}\ \  \text{if}\ \ (x,k)\in\T^d  \times \Z^d.
\end{equation*}

When $m=1$, we remark that, $\mathfrak{p}(x,k)\in\SS^s(\T^d\times\Z^d;\mathbb C)$ if and only if there exists $\tt{\mathfrak{p}} \in\SS(\R^d\times \R^d;\mathbb C)$ such that $\tt{\mathfrak{p}}(x,\xi)$ is periodic in $x$ with period 1 for all $\xi\in\R^d$ (hence $x\in\T^d$), $\tt{\mathfrak{p}}\big|_{\T^d\times\Z^d}=\mathfrak{p}(x,k)$ and (see for example \cite[Theorem 4.5.3 and Corollary 4.5.7]{Ruzhansky-Turunen-2010-Book} and \cite[Theorem 7.2.1]{Saranen-Vainikko-2002-Book})
$$|\tt {\mathfrak{p}}|_{\R^d}^{\alpha,\beta;s}\simeq |\mathfrak{p}|_{\T^d}^{\alpha,\beta;s}.$$
Therefore the bounded subset in
$\SS^s(\T^d\times \Z^d;\mathbb{C})$ coincides with the restriction to $\T^d$ of 
bounded subset in $\SS^s(\R^d\times \R^d;\mathbb{C})$.
If $m\ge1$, this also holds true by  considering each element in the matrix.
Therefore, we simplify notations if there is no ambiguity in the context and we will simply write
\begin{equation}\label{Ss unify}
\SS^s:=\left\{\SS^s(\R^d\times \R^d;\mathbb{C}^{m\times m}) \ : \ \mathfrak{p}(x,-\xi)=\overline{ \mathfrak{p}(x,\xi)} \right\}\ \text{or}\ \left\{\SS^s(\T^d\times \Z^d;\mathbb{C}^{m\times m})\ : \ \mathfrak{p}(x,-k)=\overline{ \mathfrak{p}(x,k)}\right\}.
\end{equation}
In th following we will also consider symbols only depending on the frequency variable $\xi$ (if $x\in\R^d$) or $k$ (if $x\in\T^d$). 
To highlight the differences, we let
\begin{equation}
\SS_0:=\left\{\mathfrak{p}\in \SS^s\ : \ 
\begin{aligned}
&\mathfrak{p}(x,\xi)=\mathfrak{p}(\xi),\ \text{if}\ 
(x,\xi)\in\R^d\times \R^d\\
&\mathfrak{p}(x,k)=\mathfrak{p}(k),\ \text{if}\ 
(x,k)\in\T^d\times \Z^d
\end{aligned}
\right\}
\end{equation}
To emphasize the scalar symbols (i.e., $m=1$ in \eqref{Ss define}), as in \eqref{Ss unify}, we simply write
\begin{equation*}
\S^s:=\left\{\SS^s(\R^d\times \R^d;\mathbb{C}) \ : \ \mathfrak{p}(x,-\xi)=\overline{ \mathfrak{p}(x,\xi)} \right\}\ \text{or}\ \left\{\SS^s(\T^d\times \Z^d;\mathbb{C})\ : \ \mathfrak{p}(x,-k)=\overline{ \mathfrak{p}(x,k)}\right\}.
\end{equation*}
Then for 
$s\in\R$, we  recall \eqref{OP define} and define
\begin{align}
\OP\SS^s:=\Big\{\OP({\mathfrak{p} })\ : \ {\mathfrak{p} } \in\SS^s\Big\},\ \ \OP\SS_0^s:= \Big\{\OP({\mathfrak{p} })\ : \ {\mathfrak{p} } \in \SS_0^s\Big\}.\label{OPS Real}
\end{align}
In the same way, $\OP\S^s$ and $\OP\S_0^s$ can be defined as pseudo-differential operators with symbols in $\S^s$ and $\S_0^s$, respectively.

For linear operators $\mathcal{A}$ and $\mathcal{B}$,   $[\mathcal{A},\mathcal{B}]:=\mathcal{A}\mathcal{B}-\mathcal{B}\mathcal{A}$. $\mathcal{A}^*$ denotes the $L^2$-adjoint operator of the linear operator $\mathcal{A}$. Let $\lesssim$ and $\gtrsim$ denote estimates that hold up to some universal \textit{deterministic} constant which may change from line to line. Let $X$ and $Y$ be two Banach spaces. We denote by $\LL(X;Y)$ the class of bounded linear operators from $X$ to $Y$. To conclude this part, 
we recall, 
cf. \cite[Page 53]{Taylor-1981-Book} and \cite[Theorem 3.41]{Abels-2012-Book},
\begin{align}
{\rm OP}: \SS^{s}\to \LL(H^{r+s};H^r) \ \ \text{is continuous},\ \ r,s\in\R.\label{OP continuous}
\end{align}


%
%

\subsection{Definitions}

Although \eqref{OP continuous} means that $\OP\SS^s$ can be measured by $\LL(H^r;H^{r-s})$,  it is also convenient to consider boundedness in the following sense:
\begin{Definition}
Let $s\in\R$. $\{\P_{n}\}_{n\ge1}\subset \OP\SS^s$ is said to be  bounded if $\P_{n}=\OP(\mathfrak{p}_{n})$ and $\{\mathfrak{p}_{n}\}_{n\ge1}\subset \SS^s$ is bounded  in the sense of boundedness in Fr\'{e}chet space $($cf. {\rm \cite{Rudin-1991-book}}$)$. 
\end{Definition}
To avoid any confusion, for two separable Banach spaces $X$ and $Y$,  $\|\cdot\|_{\LL(X;Y)}$ will always be mentioned if boundedness of $\LL(X;Y)$ is considered.

Next, we give the precise definition of the solutions. To this end,  we first rewrite \eqref{Cauchy problem-S}. 
By using the following formula for a semi-martingale $\xi(t)$: 
$$ \xi\circ \d \tt W_k = \xi  \d \tt W_k+ \frac{1}{2} \d \left\langle \xi, \tt W_k\right\rangle , $$ where $\left\langle\cdot,\cdot\right\rangle$ is the quadratic variation,
\eqref{Cauchy problem-S} can be reformulated as
\begin{align}
\d u=\left[-(u\cdot \nabla) u-F(u)+\frac12\sum_{k=1}^{\infty}{\Q}^2_ku\right] \d t+\sum_{k=1}^{\infty}\left({\Q}_ku \d \tt W_k+h_k(t,u)\d W_k\right),\ \ u\big|_{t=0}=\,u_{0}, \ \ x\in \R^d.\label{Cauchy problem-Ito}
\end{align}
Then we will try to find solutions to \eqref{Cauchy problem-Ito} in the following sense:
\begin{Definition}\label{pathwise solution definition}
Let $d\geq 2$ and let $\K=\R$ or $\T$. Let $u_0$ be an $H^s(\K^d;\R^d)$-valued $\mathcal{F}_0$-measurable random variable with $s>\frac{d}{2}+1$. A local pathwise solution to \eqref{Cauchy problem-Ito} is a pair $(u,\tau)$, where \\
{\bf (1)} $\tau$ is a stopping time satisfying $\p(\tau>0)=1$ and
$(u(t))_{t\in[0,\tau)}$ is an $\mathcal{F}_t$-progressively measurable such that 
$$\sup_{t'\in[0,t]}\|u(t')\|_{H^{s}}<\infty,\ t\in[0,\tau)\ \ \pas,$$ and the following equation holds for all $(t,x)\in[0,\infty)\times\K^d$:
\begin{align*} 
u(t)-u_0
+\int_0^{t}
&\left[\left(u\cdot\nabla\right) u+F(u)-\frac12\sum_{k=1}^{\infty}\Q^2_ku \right](t')\d t'\\
&=\int_0^{t\wedge \tau}\sum_{k=1}^{\infty}{\Q}_ku(t') \d \tt W_k(t')+\int_0^{t}\sum_{k=1}^{\infty}h_k(t',u)\d W_k(t'),\ \ t\in[0,\tau)\ \ \pas
\end{align*}
{\bf (2)}  Additionally,  a local solution $(u,\tau^*)$ is called maximal,  if $\tau^*>0$ almost surely and \begin{equation*}
\limsup_{t\to\tau^*}\|u(t)\|_{H^s}=\infty\ a.s.\ on\ \{\tau^*<\infty\}.
\end{equation*}
If $\tau^*=\infty$ almost surely, then such a solution is called global.
\end{Definition}
We also introduce the following notions on the stability of exiting time.
\begin{Definition}[Stability of exiting time]
\label{Stability on exiting time}
Let $d\geq 2$ and let $\K=\R$ or $\T$. Let $u_0$ be an $H^s(\K^d;\R^d)$-valued $\mathcal{F}_0$-measurable random variable with $s>\frac{d}{2}+1$. Assume that $\{u_{0,n}\}$ is an arbitrary sequence of $H^s(\K^d;\R^d)$-valued $\mathcal{F}_0$-measurable random variables. For each $n$, let $u$ and $u_n$ be the unique solutions to \eqref{EP non stable Eq} with initial value $u_0$ and $u_{0,n}$, respectively. For any $R>0$ and $n\in\N$, define the $R$-exiting time as
\begin{align*}
\tau^R_{n}:=\inf\left\{t\geq0: \|u_n(t)\|_{H^s}>R\right\},\ \
\tau^R:=\inf\left\{t\geq0: \|u(t)\|_{H^s}>R\right\},
\end{align*}
where $\inf \varnothing= \infty$.\\
{\bf (1)} Let $R>0$. If $u_{0,n}\rightarrow u_0$ in $H^{s}$ almost surely implies
\begin{align}
\lim_{n\rightarrow\infty}\tau^R_{n}=\tau^R\ \ \pas,\label{hitting time continuity}
\end{align}
then the $R$-exiting time is said to be stable at $u$.\\
{\bf (2)} Let $R>0$. If $u_{0,n}\rightarrow u_0$ in $H^{s'}$ for all $s'<s$ almost surely also implies \eqref{hitting time continuity}, then the $R$-exiting time is said to be strongly stable at $u$.

\end{Definition}

\section{Cancellation of singularities}\label{Sect:cancel}

In this section, we will develop two abstract cancellation properties to achieve \eqref{cancel tatget} (see Theorems \ref{Cancel-Bk} and \ref{Cancel-Ak}). For the well-known transport noise case, we refer to \cite{Albeverio-etal-2021-JDE,Crisan-Flandoli-Holm-2018-JNS,Alonso-etal-2019-NODEA,Alonso-Bethencourt-2020-JLNS,Alonso-Rohde-Tang-2021-JLNS} and the references therein.

Recall that 
$\{\P_{n}\}_{n\ge1}\subset \OP\SS^s$ is said to be bounded if $\P_{n}=\OP(\mathfrak{p}_{n})$ and $\mathfrak{p}_{n}\subset \SS^s$ is bounded. Remember that $\mathcal{P}^*$ is the $L^2$-adjoint operator of the linear operator $\mathcal{P}$, and $\K=\R$ or $\T$. Then we make the following assumptions:
\begin{Hypothesis}\label{H-cancel-Bk}
Let $d,m\ge1$ and $\beta\in [0,1]$. 
Let $b_k={\rm diag}(b_{k,1},\,\cdots,\,b_{k,m})$ with $b_{k,i}\in H^{\infty}(\K^d,\R):=\cap_{s\ge0}H^{s}(\K^d,\R)$ $(1\le i\le m)$ satisfying $\sum_{k=1}^{\infty}\sum_{i=1}^m\|b_{k,i}\|_{H^s}<\infty$  for all $s\ge0$. 
Assume that
\begin{equation*}
\B_k:={\rm diag} (\B_{k,1},\cdots,\B_{k,m}),\ \ k\ge 1, 
\end{equation*}
such that  $\{\B_{k,i}\}_{k\ge1}\subset\OP\S^{\beta}$ $(1\le i\le m)$ are bounded. Besides, we suppose that
the following conditions hold true:
\begin{enumerate}[label={ $\bf H_1(\arabic*)$}]

\item\label{Bk-Assum-H} 
There is a sequence of operators $\{\H_k\}_{k\ge1}$ such that  $\H_k={\rm diag} (\H_{k,1},\cdots,\H_{k,m})$, $\{\H_{k,i}\}_{k\ge1}\subset\OP\S^{0}$ $(1\le i\le m)$ is  bounded, and
\begin{equation*}
\B_{k}^*=\H_{k}-\B_{k},\ \ k\ge1.
\end{equation*}

\item\label{Bk-Assum-[hB]} There are constants $\sigma_0,\,c\ge0$
such that for any $h\in C_0(\R^d,\R)$ if $\K=\R$ or $h\in C(\T^d,\R)$ if $\K=\T$,
\begin{equation*}
\Big\|\big[(h\mathbf{I})\B_{k,i},\B_{k,i}^*(h\mathbf{I})\big]\Big\|_{\LL(L^2;L^2)}\leq c \|h\|_{H^{\sigma}}^2,\ \ 1\le i\le m,\ \  k\ge1,\ \ \sigma>\sigma_0.
\end{equation*}
\end{enumerate}

\end{Hypothesis}

As extensions of \eqref{known transport noise}, where $\{\partial_{x_j}\}_{1\le j\le d}$ are skew-adjoint operators, we assume that $\{\B_k\}_{k\ge 1}$ are not far away from
skew-adjoint operators.  In \cite{Tang-Wang-2022-arXiv}, $b_k$ is assumed to be constant. 
Since $\B_k$ already depends on $x$,
at first glance it might not be necessary to assume that $b_k$ also depends on $x$. 
However,
we prefer to do so not only because 
it can be easily compared to the well-known case \eqref{known transport noise} but also because the extension is non-trivial and there are subtle adjustments necessary to be clarified in the new situation. We refer to Remark \ref{Remark-Bk cancel} for more details explaining the conditions in \ref{H-cancel-Bk}.

For clarity, in the following we denote by $b_k\B_k:=(b_k\I)\B_k$ and  $\|b_k\|_{H^s}:=\sum_{i=1}^m\|b_{k,i}\|^2_{H^s}$, $s\ge0$. The cancellation properties \eqref{cancel tatget} for $b_k\B_k$ is stated in the following

\begin{Theorem} \label{Cancel-Bk}
Let $s\ge 0$ and $\{\P_n\}_{n\ge1}\subset\OP\S^s$ be bounded. 
We let 
\begin{equation}\label{s0 bk}
s_0:=
1+\frac{d}{2}+\varepsilon_0\ (\forall \, \varepsilon_0>0) \ \ \text{if}\ s\leq\frac{d}{2}+1, \ \text{and}\ 
s_0:=s,\ \ \text{if}\ s>\frac{d}{2}+1.
\end{equation}
Then we have the following assertions:\\
{\bf (1)} If Hypothesis  \ref{H-cancel-Bk} \textbf{without} \ref{Bk-Assum-[hB]} holds true, then there is a constant $C>0$ independent of $n$ such that 
\begin{equation}
\sum_{k=1}^\infty\bIP{\mathcal{P}_n(b_k\B_k) f, \mathcal{P}_nf}^2_{L^2}\leq C\sum_{k=1}^{\infty}\|b_{k}\|^2_{H^{s_0}}\norm{f}^4_{H^s},\ \ f\in H^{s+\beta}(\K^d;\R^m).\label{cancellation-PB}
\end{equation}
{\bf(2)}  Let Hypothesis \ref{H-cancel-Bk} hold and $s\ge1-\beta$. 
There is a constant $C>0$, independent of $k$ and $n$, such that
\begin{align}
\sum_{k=1}^\infty\left|\bIP{ \P_n (b_k\B_{k})^{2}f, \P_n f }_{L^2}
+ \bIP{ \P_n (b_k\B_{k}) f, \P_n (b_k\B_{k})f }_{L^2} \right|\leq C\sum_{k=1}^{\infty}B_k\norm{f}_{H^s}^2,\ \ f\in H^{s+2\beta}(\R^d;\R^m),\label{cancellation-PB2}
\end{align} 
where $\sigma_0$ is given in \ref{Bk-Assum-[hB]} and $B_k:=\norm{b_{k}}_{H^{s_0}}+\norm{b_{k}}^2_{H^{s_0\vee (\sigma_0+1)}}.$ 
\end{Theorem}
\begin{proof}
{\bf (1)}
One first 
infers from \eqref{OP continuous} and \ref{Bk-Assum-H} that
\begin{equation}\label{Pn Qj Tj operator norm}
\sup_{n\ge1}\norm{\P_n }_{\LL(H^{r+s};H^r)},\ \
\sup_{k\ge1}\norm{\mathcal{B}_k}_{\LL(H^{r+\beta};H^r)},\ \ 
\sup_{k\ge1}\norm{\mathcal{H}_k}_{\LL(H^r;H^r)}<\infty,\ \ r\in\R.
\end{equation}
Since $b_k\B_k f=(b_{k,i}\B_{k,i}f_i)_{1\le i\le m}$, we find
\begin{align*}
\bIP{ \P_n(b_k\B_k) f, \P_nf }_{L^2}
=\,& I_{1,k}+ I_{2,k},\\ 
I_{1,k}:=\,&\sum_{i=1}^m\bIP{ [\P_n,b_{k,i}\I]\B_{k,i} f_i, \P_nf_i }_{L^2},\\ 
I_{2,k}:=\,&\sum_{i=1}^m\bIP{ b_{k,i}\P_n\B_{k,i} f_i, \P_nf_i }_{L^2}.
\end{align*}
Since $s_0-s\ge0$ and $\beta\leq 1$, we can infer from Lemma \ref{commutator:Taylor 2-n} (with $q=0$, $\sigma=s_0$ and $r=s$) and \eqref{Pn Qj Tj operator norm} that
$$I_{1,k}\lesssim\norm{b_{k}}_{H^{s_0}}\norm{\B_k f}_{H^{s-1}}\norm{f}_{H^s}\lesssim \|b_{k}\|_{H^{s_0}}\norm{f}^2_{H^s}.$$
Now we estimate $I_{2,k}$. Via  \ref{Bk-Assum-H},
one can observe that
\begin{align*}
I_{2,k}
=\,\sum_{i=1}^m\Big(&\bIP{ [\P_n,\B_{k,i}] f_i,b_{k,i}\P_nf_i }_{L^2}+
\bIP{ \B_{k,i} \P_n f_i,b_{k,i}\P_nf_i }_{L^2}\Big)\\
=\,\sum_{i=1}^m\Big(&\bIP{ [\P_n,\B_{k,i}] f_i,b_{k}\P_nf_i }_{L^2}+
\bIP{ \P_n f_i, -\B_{k,i} (b_{k,i}\P_nf_i) }_{L^2}
+\bIP{ \P_n f_i, \mathcal{H}_{k,i} (b_{k,i}\P_nf_i) }_{L^2}\Big)\\
=\,\sum_{i=1}^m\Big(&\bIP{ [\P_n,\B_{k,i}] f_i,b_{k,i}\P_nf_i }_{L^2}
-\bIP{ \P_n f_i, [\B_{k,i},b_{k,i}\I]\P_nf_i }_{L^2}\\
&-\bIP{ \P_n f_i, b_{k,i}\B_{k,i}\P_nf_i }_{L^2}
+\bIP{ \P_n f_i, \mathcal{H}_{k,i} (b_{k,i}\P_nf_i) }_{L^2}\Big).
\end{align*}
Since all the functions and operators are real, $\bIP{ \P_nf_i,b_{k,i}\B_{k,i}\P_nf_i }_{L^2}=\bIP{ \B_{k,i}\P_nf_i,b_{k,i}\P_nf_i }_{L^2}$, which brings us
\begin{align*}
I_{2,k}
=\,\sum_{i=1}^m\Big(&\bIP{ [\P_n,\B_{k,i}] f_i,b_{k,i}\P_nf_i }_{L^2}
-\bIP{ \P_n f_i, [\B_{k,i},b_{k,i}\I]\P_nf_i }_{L^2}\\
&-\bIP{ \B_{k,i}\P_nf_i,b_{k,i}\P_nf_i }_{L^2}
+\bIP{ \P_n f_i, \mathcal{H}_{k,i} (b_{k,i}\P_nf_i) }_{L^2}\Big)\\
=\,\sum_{i=1}^m\Big(&2\bIP{ [\P_n,\B_{k,i}] f_i,b_{k,i}\P_nf_i }_{L^2}
-\bIP{ \P_n f_i, [\B_{k,i},b_{k,i}\I]\P_nf_i }_{L^2}
+\bIP{ \P_n f_i, \mathcal{H}_{k,i} (b_{k,i}\P_nf_i) }_{L^2}\Big)-I_{2,k}.
\end{align*}
Hence
\begin{align*}
I_{2,k}= \sum_{i=1}^m\Big(&\bIP{ [\P_n,\B_{k,i}] f_i,b_{k,i}\P_nf_i }_{L^2}
-\frac12\bIP{ \P_n f_i, [\B_{k,i},b_{k,i}\I]\P_nf_i }_{L^2}
+\frac12\bIP{ \P_n f_i, \mathcal{H}_{k,i} (b_{k,i}\P_nf_i) }_{L^2}\Big).
\end{align*}
On account of \eqref{Pn Qj Tj operator norm}, $H^{s_0}\hookrightarrow W^{1,\infty}$, Lemmas \ref{Lemma:(pn qm) [P Q]} and \ref{commutator:Taylor 2-n} (with $q=0$, $\sigma=s_0$ and $r=\beta$), we have
$$\left|\sum_{i=1}^m\bIP{ [\P_n,\B_{k,i}] f_i,b_{k,i}\P_nf_i }_{L^2}\right|\lesssim \norm{f}_{H^{s+\beta-1}}\norm{b_{k}}_{L^\infty}\norm{f}_{H^s}
\lesssim\norm{b_{k}}_{H^{s_0}}\norm{f}^2_{H^s},$$
$$\left|\sum_{i=1}^m\bIP{ \P_n f_i, [\B_{k,i},b_{k,i}\I]\P_nf_i }_{L^2}\right|
\lesssim \norm{f}_{H^s} \norm{b_{k}}_{H^{s_0}} \norm{\P_nf}_{H^{\beta-1}}\lesssim\norm{b_{k}}_{H^{s_0}}\norm{f}^2_{H^s},$$
and
$$\sum_{i=1}^m\bIP{ \P_n f_i, \mathcal{H}_{k,i} (b_{k,i}\P_nf_i) }_{L^2}\lesssim \norm{b_{k}}_{H^{s_0}}\norm{f}^2_{H^s}.$$
In conclusion we derive
\begin{align*}
|I_{2,k}|\lesssim \|b_{k}\|_{H^{s_0}}\norm{f}^2_{H^s}.
\end{align*}
Combining the estimates for $I_{i,k}$ with $i=1,2$ and then taking summation $\sum_{k\ge1}$, we obtain \eqref{cancellation-PB}.

\medskip

{\bf (2)} The proof for \eqref{cancellation-PB2} includes the following steps.

\begin{Step}\label{Step:reformulation} 
For $k,n\in\N$ and $1\le i\le m$, we let
\begin{align*}
\begin{cases}
\mathcal{Z}_{k,i}= [b_{k,i}\I,\B_{k,i}]+\H_{k,i}(b_{k,i}\mathbf{I}),\ \ &\mathcal{R}^{(i)}_{1,k}=[b_{k,i}\B_{k,i},\mathcal{Z}_{k,i}],\\ 
\mathcal{R}^{(i)}_{2,k,n}=\big[\P_n ,b_{k,i}\B_{k,i}\big],\ \ &\mathcal{R}^{(i)}_{3,k,n}=\big[\mathcal{R}^{(i)}_{2,k,n},b_{k,i}\B_{k,i}\big],
\end{cases}
\end{align*}
where $\H_k$ is given in \ref{Bk-Assum-H}. We claim that
\begin{align}
\bIP{ \P_n (b_k\B_k)^{2}f, \P_n f }_{L^2} 
+ \bIP{ \P_n (b_k\B_k) f, \P_n (b_k\B_k) f }_{L^2} 
=\, \sum_{j=1}^{6}\sum_{i=1}^m N_{j,i},\label{cancellation-identiy}
\end{align}
where
\begin{equation*}
\begin{cases}
N_{1,i}:=\, \bIP{ \mathcal{R}^{(i)}_{3,k,n}f_i, \P_n f_i }_{L^2},\ \ 
&N_{2,i}:=\, \bIP{ \mathcal{R}^{(i)}_{2,k,n}f_i, \mathcal{R}^{(i)}_{2,k,n} f_i }_{L^2} \\
N_{3,i}=\, \bIP{ \P_n f_i, \mathcal{Z}_{k,i}\mathcal{R}^{(i)}_{2,k,n}f_i }_{L^2},  \ \
&N_{4,i}:=\,-\frac{1}{2}\bIP{ \P_n f_i, \mathcal{R}^{(i)}_{1,k} \P_n f_i }_{L^2} \\
N_{5,i}:=\,\frac{1}{2}\bIP{ \P_n f_i, \mathcal{Z}_{k,i}^{2} \P_n f_i }_{L^2},  \ \
&N_{6,i}:=\,\bIP{ \mathcal{R}^{(i)}_{2,k,n}f_i, \mathcal{Z}_{k,i}\P_n f_i }_{L^2} 
\end{cases} 
\end{equation*}
\end{Step}
To simplify notation, we let 
$$\TT_k:={\rm diag}(\TT_{k,1},\,\cdots,\,\TT_{k,m}),\ \  \TT_{k,i}:=b_{k,i}\B_{k,i}=(b_{k,i}\I)\B_{k,i},\ \ 1\le i\le m.$$ 
By \ref{Bk-Assum-H}, one can immediately find that 
$\TT_{k,i}^*=-\TT_{k,i}+\mathcal{Z}_{k,i}.$
Therefore we arrive at
\begin{align*}
\bIP{ \P_n \TT_{k,i}^{2} f_i, \P_n f_i }_{L^2} 
=\,&\bIP{ \big(\TT_{k,i}\P_n +\mathcal{R}^{(i)}_{2,k,n}\big)\TT_{k,i} f_i, \P_n f_i }_{L^{2}}\notag\\
=\,&\bIP{ \P_n \TT_{k,i} f_i, \TT_{k,i}^* \P_n f_i }_{L^{2}}
+\bIP{ \mathcal{R}^{(i)}_{2,k,n} \TT_{k,i} f_i, \P_n f_i }_{L^{2}}\notag\\
=\,&-\bIP{ \P_n \TT_{k,i} f_i, \TT_{k,i} \P_n f_i }_{L^{2}}+ \bIP{ \P_n \TT_{k,i} f_i, \mathcal{Z}_{k,i} \P_n f_i }_{L^{2}}+ \bIP{ \mathcal{R}^{(i)}_{2,k,n} \TT_{k,i} f_i, \P_n f_i }_{L^{2}}\notag\\
=\,&- \bIP{ \P_n \TT_{k,i} f_i, \P_n \TT_{k,i} f_i }_{L^{2}}
+ \bIP{ \P_n \TT_{k,i} f_i, \mathcal{R}^{(i)}_{2,k,n} f_i }_{L^{2}}\notag\\
&+ \bIP{ \P_n \TT_{k,i} f_i, \mathcal{Z}_{k,i} \P_n f_i }_{L^{2}}+ \bIP{ \mathcal{R}^{(i)}_{2,k,n} \TT_{k,i} f_i, \P_n f_i }_{L^{2}},
\end{align*}
which means
\begin{align*}
&\bIP{ \P_n \TT_{k,i}^{2} f_i, \P_n f_i }_{L^2}
+ \bIP{ \P_n \TT_{k,i} f_i, \P_n \TT_{k,i} f_i }_{L^{2}} \\
=\,&
\bIP{ \P_n \TT_{k,i} f_i, \mathcal{R}^{(i)}_{2,k,n} f_i }_{L^{2}}
+ \bIP{ \P_n \TT_{k,i} f_i, \mathcal{Z}_{k,i} \P_n f_i }_{L^{2}}+ \bIP{ \mathcal{R}^{(i)}_{2,k,n} \TT_{k,i} f_i, \P_n f_i }_{L^{2}}.
\end{align*}
Note that $\P_n $ is of order $s$. Then $\P_n \TT_{k,i}$ is of order $s+\beta\ge s$. Similarly, the order of $\mathcal{R}^{(i)}_{2,k,n} \TT_{k,i}$ may be bigger than $s$. Therefore, by commuting $\P_n $ and $\TT_{k,i}$ and using $\TT^*_{k,i}$ again, we have
\begin{align}
& \bIP{ \P_n \TT_{k,i}^{2} f_i, \P_n f_i }_{L^2}
+ \bIP{ \P_n \TT_{k,i} f_i, \P_n \TT_{k,i} f_i }_{L^{2}}\notag\\ 
=\,& \bIP{ \TT_{k,i} \P_n f_i, \mathcal{R}^{(i)}_{2,k,n} f_i }_{L^{2}}+ \bIP{ \mathcal{R}^{(i)}_{2,k,n} f_i, \mathcal{R}^{(i)}_{2,k,n} f_i }_{L^{2}}
+ \bIP{ \mathcal{R}^{(i)}_{2,k,n} \TT_{k,i} f_i, \P_n f_i }_{L^{2}}+ \bIP{ \P_n \TT_{k,i} f_i, \mathcal{Z}_{k,i}\P_n f_i }_{L^{2}}\notag\\ 
=\,& - \bIP{ \P_n f_i,\TT_{k,i} \mathcal{R}^{(i)}_{2,k,n} f_i }_{L^{2}}+ \bIP{ \P_n f_i, \mathcal{Z}_{k,i} \mathcal{R}^{(i)}_{2,k,n} f_i }_{L^{2}}\notag\\
&+ \bIP{ \mathcal{R}^{(i)}_{2,k,n} f_i, \mathcal{R}^{(i)}_{2,k,n} f_i }_{L^{2}}+ \bIP{ \mathcal{R}^{(i)}_{2,k,n}\TT_{k,i} f_i, \P_n f_i }_{L^{2}} + \bIP{ \P_n\TT_{k,i} f_i, \mathcal{Z}_{k,i}\P_n f_i }_{L^{2}}\notag\\ 
=\,& \bIP{ \mathcal{R}^{(i)}_{3,k,n} f_i, \P_n f_i }_{L^{2}}+ \bIP{ \P_n f_i, \mathcal{Z}_{k,i} \mathcal{R}^{(i)}_{2,k,n} f_i }_{L^{2}}+ \bIP{ \mathcal{R}^{(i)}_{2,k,n} f_i, \mathcal{R}^{(i)}_{2,k,n} f_i }_{L^{2}} + \bIP{ \P_n\TT_{k,i} f_i, \mathcal{Z}_{k,i}\P_n f_i }_{L^{2}},\label{cancellation-rewrite}
\end{align}
where in the last step the fact that all the functions are real-valued is used to obtain 
$$- \bIP{ \P_n f_i, \TT_{k,i} \mathcal{R}^{(i)}_{2,k,n} f_i }_{L^{2}}+ \bIP{ \mathcal{R}^{(i)}_{2,k,n} \TT_{k,i} f_i, \P_n f_i }_{L^{2}}= \bIP{ \mathcal{R}^{(i)}_{3,k,n} f_i, \P_n f_i }_{L^{2}}.$$ 
Note that $\mathcal{Z}_{k,i}$ is self-adjoint. Then,
once again, we find
\begin{align*}
& \bIP{ \P_n\TT_{k,i} f_i, \mathcal{Z}_{k,i}\P_n f_i }_{L^{2}}\\
=\,&
\bIP{ \TT_{k,i}\P_nf_i, \mathcal{Z}_{k,i}\P_n f_i }_{L^{2}}+ \bIP{ \mathcal{R}^{(i)}_{2,k,n}f_i, \mathcal{Z}_{k,i}\P_n f_i }_{L^{2}}\\
=\,& - \bIP{ \P_n f_i, \TT_{k,i} \mathcal{Z}_{k,i} \P_n f_i }_{L^{2}}
+ \bIP{ \P_n f_i, \mathcal{Z}_{k,i}^{2} \P_n f_i }_{L^{2}}
+ \bIP{ \mathcal{R}^{(i)}_{2,k,n} f_i, \mathcal{Z}_{k,i} \P_n f_i }_{L^{2}}\\
=\,& - \bIP{ \P_n f_i, \mathcal{Z}_{k,i}\TT_{k,i} \P_n f_i }_{L^{2}}
- \bIP{ \P_n f_i, \mathcal{R}^{(i)}_{1,k} \P_n f_i }_{L^{2}}+ \bIP{ \P_n f_i, \mathcal{Z}_{k,i}^{2} \P_n f_i }_{L^{2}}
+ \bIP{ \mathcal{R}^{(i)}_{2,k,n} f_i, \mathcal{Z}_{k,i} \P_n f_i }_{L^{2}}\\
=\,& - \bIP{ \mathcal{Z}_{k,i}\P_n f_i, \TT_{k,i} \P_n f_i }_{L^{2}}
- \bIP{ \P_n f_i, \mathcal{R}^{(i)}_{1,k} \P_n f_i }_{L^{2}}+ \bIP{ \P_n f_i, \mathcal{Z}_{k,i}^{2} \P_n f_i }_{L^{2}}
+ \bIP{ \mathcal{R}^{(i)}_{2,k,n} f_i, \mathcal{Z}_{k,i} \P_n f_i }_{L^{2}}.
\end{align*}
Therefore, adding $ \bIP{ \P_n\TT_{k,i} f_i, \mathcal{Z}_{k,i}\P_n f_i }_{L^{2}}$ to both sides of the above equation and then using $\P_n\TT_{k,i}-\TT_{k,i} \P_n =\mathcal{R}^{(i)}_{2,k,n}$ give rise to
\begin{align}
2 \bIP{ \P_n\TT_{k,i} f_i, \mathcal{Z}_{k,i}\P_n f_i }_{L^{2}}=\,& 
- \bIP{ \P_n f_i, \mathcal{R}^{(i)}_{1,k} \P_n f_i }_{L^{2}}+ \bIP{ \P_n f_i, \mathcal{Z}_{k,i}^{2} \P_n f_i }_{L^{2}}
+2 \bIP{ \mathcal{R}^{(i)}_{2,k,n} f_i, \mathcal{Z}_{k,i} \P_n f_i }_{L^{2}}\label{cancellation:P L-k}
\end{align}
Combining \eqref{cancellation-rewrite} and \eqref{cancellation:P L-k} gives
\begin{align*}
& \bIP{ \P_n \TT_{k,i}^{2} f_i, \P_n f_i }_{L^2}
+ \bIP{ \P_n \TT_{k,i} f_i, \P_n \TT_{k,i} f_i }_{L^{2}}\notag\\ 
=\,& \bIP{ \mathcal{R}^{(i)}_{3,k,n} f_i, \P_n f_i }_{L^{2}}+ \bIP{ \P_n f_i, \mathcal{Z}_{k,i} \mathcal{R}^{(i)}_{2,k,n} f_i }_{L^{2}}+ \bIP{ \mathcal{R}^{(i)}_{2,k,n} f_i, \mathcal{R}^{(i)}_{2,k,n} f_i }_{L^{2}}\\
&-\frac12\bIP{ \P_n f_i, \mathcal{R}^{(i)}_{1,k} \P_n f_i }_{L^{2}}
+\frac12 \bIP{ \P_n f_i, \mathcal{Z}_{k,i}^{2} \P_n f_i }_{L^{2}}
+\bIP{ \mathcal{R}^{(i)}_{2,k,n} f_i, \mathcal{Z}_{k,i} \P_n f_i }_{L^{2}}\\
=\,& \sum_{j=1}^6 N_{j,i}.
\end{align*}
Taking summation $\sum_{i=1}^m$ to above identity gives \eqref{cancellation-identiy}.

\begin{Step}\label{Step:uniform bounds}
Now we claim that there is a constant $C>0$ independent of $n$ and $k$ such that for $1\le i\le m$,
\begin{equation}\label{Ek operator norm}
\|\mathcal{Z}_{k,i}\|_{\LL(L^2;L^2)}\leq C \norm{b_{k,i}}_{H^{s_0}},\ \ k\in\N,
\end{equation}
\begin{equation}\label{R1k-n f bound}
\sup_{n\ge1}\|\mathcal{R}^{(i)}_{2,k,n}\|_{\LL(H^{s+\beta-1};L^2)}\leq C\norm{b_{k,i}}_{H^{s_0}},\ \ k\ge1, 
\end{equation}
where $s_0$ is given in \eqref{s0 bk},and
\begin{equation}\label{[R1 Q] operator norm}
\sup_{k,n\ge1}\|[\mathcal{R}^{(i)}_{2,k,n},\B_{k,i}]\|_{\LL(H^{s+2\beta-2};L^2)}<\infty,\ 1\leq i\leq m.
\end{equation}
\end{Step}
For $\mathcal{R}^{(i)}_{2,k,n}$, it holds that
\begin{align*}
\|\mathcal{R}^{(i)}_{2,k,n}g\|_{L^2}= \|[\P_n ,b_{k,i}\B_{k,i}] g\|_{L^2}
\leq\,&
\|[\P_n,b_{k,i}\I]\B_{k,i} g\|_{L^2}
+ \|b_{k,i}[\P_n ,\B_{k,i}] g\|_{L^2}.
\end{align*}
Since the symbols of $\P_n$ is bounded in $\S^s$, by $\beta\le1$, Lemma \ref{commutator:Taylor 2-n} (with $\sigma=s_0$ and $q=0$) and \eqref{Pn Qj Tj operator norm}, we obtain that for sufficiently regular function $g$,
$$\|[\P_n,b_{k,i}\I]\B_{k,i} g\|_{L^2}\leq C \norm{b_{k,i}}_{H^{s_0}}\norm{\B_{k,i} g}_{H^{s-1}}\leq C \norm{b_{k,i}}_{H^{s_0}}\norm{g}_{H^{s+\beta-1}}.$$
Similarly, we infer from \ref{Bk-Assum-H} and Lemma \ref{Lemma:(pn qm) [P Q]} that $[\P_n ,\B_{k,i}]\in \OP\SS^{s+\beta-1}$ and its operator norm is bounded. Therefore, on account of Lemmas \ref{LOP} and \ref{Lemma:(pn qm) [P Q]}, for sufficiently regular function $g$, we have
$$\|b_{k,i}[\P_n ,\B_{k,i}] g\|_{L^2}\leq \norm{b_{k,i}}_{L^{\infty}}\norm{[\P_n ,\B_{k,i}]g}_{L^{2}} \leq C \norm{b_{k,i}}_{H^{s_0}} \norm{g}_{H^{s+\beta-1}}.$$
Collecting the above estimates, we obtain \eqref{R1k-n f bound}.

The estimate \eqref{Ek operator norm} may be obtained in much the same way as \eqref{R1k-n f bound}. Using Lemma \ref{commutator:Taylor 2-n} to $[\B_{k,i},b_{k,i}\I]g$, 
we arrive at
$$
\|[\B_{k,i},b_{k,i}\I] g\|_{L^2} \leq C\|b_{k,i}\|_{H^{s_0}}\|g\|_{H^{\beta-1}}\leq C\|b_{k,i}\|_{H^{s_0}}\|g\|_{L^{2}}.
$$
From this and \eqref{Pn Qj Tj operator norm}, it is easy to see that \eqref{Ek operator norm} holds.

Since
the symbol of $\mathcal{R}^{(i)}_{2,k,n}=[\P_n,b_{k,i}\B_{k,i}]$ 
can be explicitly written down (see for example \cite[(0.3.6)]{Taylor-1991-book} or \cite[Theorem 1.2.16]{Nicola-Rodino-2010-book}) and they form a bounded sequence in $\SS^{s+\beta-1}$. Then, by Lemma \ref{Lemma:(pn qm) [P Q]} once again, we obtain \eqref{[R1 Q] operator norm}.

\begin{Step}\label{Step-finish}
In this step we finish the proof.
\end{Step}
We recall that $s_0>\frac{d}{2}+1$ is given in \eqref{s0 bk}. 
We first note that
\begin{align*}
\sum_{i=1}^m|N_{1,i}| 
\lesssim\, &\sum_{i=1}^m \norm{[\mathcal{R}^{(i)}_{2,k,n},\TT_{k,i}]f_i}_{L^2}\norm{f_i}_{H^s} \\
\lesssim\,&\sum_{i=1}^m \left( \norm{[\mathcal{R}^{(i)}_{2,k,n},b_{k,i}\I]\B_{k,i} f_i }_{L^{2}}+ \norm{b_{k,i}[\mathcal{R}^{(i)}_{2,k,n},\B_{k,i}]f_i }_{L^{2}} \right)\norm{f_i}_{H^s}.
\end{align*}
As in \textbf{Step} \ref{Step:uniform bounds}, the sequence of symbols corresponding to $\{\mathcal{R}^{(i)}_{2,k,n}\}$ is bounded in $\S^{s+\beta-1}$. Applying
Lemma \ref{commutator:Taylor 2-n} to $\big[\mathcal{R}^{(i)}_{2,k,n},b_{k,i}\I\big]\B_{k,i} f_i$ (with $q=0$, $\sigma=s_0$ and $r=s+\beta-1\ge0$) yields
$$ \left\|\big[\mathcal{R}^{(i)}_{2,k,n},b_{k,i}\I\big]\B_{k,i} f_i\right\|_{L^2}\lesssim \|b_{k,i}\|_{H^{s_0}} \|\B_{k,i} f_i\|_{H^{s+\beta-2}}\lesssim \norm{b_{k,i}}_{H^{s_0}}\norm{f_i}_{H^{s}}.$$
Then we can infer from \eqref{[R1 Q] operator norm} and $\beta\leq1$ that
\begin{equation*}
\norm{b_{k,i}[\mathcal{R}^{(i)}_{2,k,n},\B_{k,i}]f_i }_{L^{2}}\leq \norm{b_{k,i}}_{L^{\infty}}\norm{\big[\mathcal{R}^{(i)}_{2,k,n},\B_{k,i}\big]f_i}_{L^{2}} \lesssim \norm{b_{k,i}}_{H^{s_0}} \norm{f_i}_{H^{s}}.
\end{equation*}
To sum up,
$$\sum_{i=1}^m|N_{1,i}| \leq C \norm{b_{k}}_{H^{s_0}}\norm{f}^2_{H^{s}}.$$
For $\sum_{i=1}^mN_{2,i}$, we simply use \eqref{R1k-n f bound} to find that
\begin{align*}
\sum_{i=1}^m|N_{2,i}| = \sum_{i=1}^m\|\mathcal{R}^{(i)}_{2,k,n}f_i\|^2_{L^2}\lesssim \norm{b_{k}}^2_{H^{s_0}}\norm{f}^{2}_{H^s}.
\end{align*}
Then we use \eqref{Pn Qj Tj operator norm}, \eqref{Ek operator norm} and \eqref{R1k-n f bound}  that
\begin{align*}
\sum_{i=1}^m|N_{3,i}|\leq C\sum_{i=1}^m\norm{\P_n f_i}_{L^2} \norm{b_{k,i}}_{H^{s_0}}\norm{\mathcal{R}^{(i)}_{2,k,n}f_i}_{L^2}
\lesssim \sum_{i=1}^m\norm{b_{k,i}}_{H^{s_0}}^2\norm{f_i}^{2}_{H^s}
\lesssim \norm{b_{k}}_{H^{s_0}}^2\norm{f}^{2}_{H^s}.
\end{align*} 
For $N_{4,i}$, we observe that
\begin{align}
[\TT_{k,i},\mathcal{Z}_{k,i}]
=\,&\left[(b_{k,i}\I)\B_{k,i},(b_{k,i}\I)\B_{k,i}-\B_{k,i} (b_{k,i}\I)
+\H_{k,i}(b_{k,i}\mathbf{I})\right]
=\left[(b_{k,i}\mathbf{I})\B_{k,i},\B_{k,i}^*(b_{k,i}\mathbf{I})\right].\label{K4 needs Assum-1}
\end{align}
Therefore \ref{Bk-Assum-[hB]} and \eqref{Pn Qj Tj operator norm} give rise to
\begin{align}
\sum_{i=1}^m|N_{4,i}|
\lesssim \sum_{i=1}^m\|b_{k,i}\|^2_{H^{ \sigma_0+1}}\|\P_nf_i\|^2_{L^{2}}
\lesssim \|b_{k}\|^2_{H^{ \sigma_0+1}}\|f\|^2_{H^{s}},\label{K4 needs Assum-2}
\end{align} 
where $\sigma_0$ is given in \ref{Bk-Assum-[hB]}.
Once again, \eqref{Pn Qj Tj operator norm}, \eqref{Ek operator norm} and \eqref{R1k-n f bound} enable us to derive
\begin{align*} 
\sum_{i=1}^m\left(|N_{5,i}|+|N_{6,i}| \right)
\leq\,& C \|b_{k}\|^2_{H^{s_0}}\norm{f}^{2}_{H^2}.
\end{align*} 
Collecting all these estimates for \eqref{cancellation-identiy}, we see that there is a constant $C>0$ independent of $n$ and $k$ such that
\begin{equation*}
\bIP{ \P_n (b_{k}\B_{k})^{2}f, \P_n f }_{L^2} + \bIP{ \P_n (b_{k}\B_{k}) f, \P_n (b_{k}\B_{k}) f }_{L^2} \leq C \left(\|b_{k}\|_{H^{s_0}}+\|b_{k}\|^2_{H^{s_0}}+\|b_{k}\|^2_{H^{\sigma_0+1}}\right)\norm{f}_{H^s}^2. 
\end{equation*} 
Hence we obtain \eqref{cancellation-PB2}. 
\end{proof}

From the above proof, we see that, if $\P_n\in\OP\S_0^s$, the cancellation properties hold true for another class of operators described by
\begin{Hypothesis}\label{H-cancel-Ak}
Let $d,m\ge1$, $\alpha\ge0$ and $a_k:={\rm diag}(a_{k,1},\,\cdots,\,a_{k,m})$, $\{a_{k,i}\}_{k\ge1}\in l^2$, $(1\le i\le m)$.
We assume
$\A_k:={\rm diag} (\A_{k,1},\cdots,\A_{k,m})$ with $k\ge 1$ 
such that  $\{\A_{k,i}\}_{k\ge1}\subset\OP\S_0^{\alpha}$ $(1\le i\le m)$ are bounded. Besides, suppose that 
\begin{equation*}
\A_{k}^*=\M_{k}-\A_{k},\ \ k\ge1,
\end{equation*} 
for  some $\M_k:={\rm diag} (\M_{k,1},\cdots,\M_{k,m})$ such that $\{\M_{k,i}\}_{k\ge1}\subset\OP\S_0^{0}$ $(1\le i\le m)$ are bounded.
\end{Hypothesis}

Indeed,  if $\P_n\in\OP\S_0^s$, repeating the proof for Theorem \ref{Cancel-Bk} with noting that in this case  $\A_k\in\OP\SS_0^{\alpha}$, we have
$[\P_n,\A_k]=[\P_n,a_k]=[\A_k,a_k]=0$
and hence  
\begin{equation*}
\bIP{\mathcal{P}_n(a_k\A_k) f, \mathcal{P}_nf}^2_{L^2}=\frac{a^2_{k}}{2}\bIP{ \P_n f, \M_k \P_nf }^2_{L^2},
\end{equation*}
and
\begin{equation*}
\Big|\bIP{ \P_n (a_k\A_{k})^{2}f, \P_n f }_{L^2}
+ \bIP{ \P_n (a_k\A_{k}) f, \P_n (a_k\A_{k})f }_{L^2} \Big|\leq  \frac{a_k^2}{2}\left|\IP{ \P_n f, \M_k^{2} \P_n f }_{L^2}\right|.
\end{equation*}
Therefore we have actually established the following cancellation properties for $x$-independence operators:

\begin{Theorem}\label{Cancel-Ak}
Let $s\ge 0$ and $\{\P_n\}_{n\ge1}\subset\OP\S_0^s$ be bounded.
If Hypothesis \ref{H-cancel-Ak} holds true, then there is a constant $C>0$ independent of $n$ such that 
\begin{equation*}
\sum_{k=1}^\infty\bIP{\mathcal{P}_n(a_k\A_k) f, \mathcal{P}_nf}^2_{L^2}\leq C\|a_k\|^2_{l^2}\norm{f}^4_{H^s},\ \ f\in H^{s+\alpha}(\R^d;\R^m),
\end{equation*}
and 
\begin{align*}
\sum_{k=1}^\infty\Big|\bIP{ \P_n (a_k\A_{k})^{2}f, \P_n f }_{L^2}
+ \bIP{ \P_n (a_k\A_{k}) f, \P_n (a_k\A_{k})f }_{L^2} \Big|\leq C \|a_k\|^2_{l^2}\norm{f}_{H^s}^2, \ \ f\in H^{s+2\alpha}(\R^d;\R^m).
\end{align*} 
\end{Theorem}

\begin{Remark}\label{Remark-Bk cancel}
A few comments are in order regarding Hypothesis \ref{H-cancel-Bk} and Theorem \ref{Cancel-Bk}. 

\textbf{(1)}  When we construct approximation scheme on \eqref{Cauchy problem-Ito} (see \eqref{GH-n} below), mollifiers $J_n$ can not commute with $\B_k$. Therefore, to obtain uniform estimate in $H^s$, we have to deal with $\P_n=\D^sJ_n$ (see Lemma \ref{Gn Hn Lemma} below). Therefore we state the uniform (in $n$) estimate for a sequence $\{\P_n\}_{n\ge1}$ rather than just one $\mathcal{P}$.  

\textbf{(2)} Let $b(x)\in H^{\infty}(\K^d;\R)$.
Now we consider the following question: 
Is there  a $q\ge0$ such that for all $\mathcal{P}\in\OP\S^s$ and $\Q\in\OP\S^{1}$,
\begin{equation}
\left\|\Big[\big[\mathcal{P} ,b(x)\Q\big],b(x)\Q\Big]\right\|_{\LL(H^s;L^2)}\lesssim \|b\|^2_{H^q} ?\label{Problem:commutator}
\end{equation}
So far we have only  been able to show (see the estimate for $L_1$ in \textbf{Step} \ref{Step-finish}
in the proof for  \eqref{cancellation-PB2}) that $$\left\|\Big[\big[\mathcal{P} ,b(x)\Q\big],b(x)\Q\Big]\right\|_{\LL(H^s;L^2)}\lesssim \|b\|_{H^{s_0}},$$
which is weaker than \eqref{Problem:commutator} in the case $\|b\|_{H^{s_0}}<1$. And this is why we have to assume $\sum_{k=1}^\infty \|b_k\|_{H^{s}}<\infty$, stronger than $\sum_{k=1}^\infty \|b_k\|^2_{H^{s}}<\infty$.
In \cite{Crisan-Flandoli-Holm-2018-JNS}, $\mathcal{P}_n=\P=\Delta$ and $\B_k$ is of the form \eqref{known transport noise}. Then Leibniz rule holds true. Without Leibniz rule, so far it is not clear how to verify \eqref{Problem:commutator}. 

\textbf{(3)} Now we explain \ref{Bk-Assum-[hB]}. On account of \ref{Bk-Assum-H} and Lemma \ref{LOP}, we see that for each $k\ge1$
\begin{align*}
[(h\mathbf{I})\B_{k,i},\B_{k,i}^*(h\mathbf{I})]
=\,&[(h\mathbf{I})\B_k,h\mathbf{I}\B_k+\B_k^*(h\mathbf{I})]\\
=\,&[(h\mathbf{I})\B_k,h\mathbf{I}\B_k-\B_k (h\mathbf{I})+\H_k(h\mathbf{I})]
=\big[(h\mathbf{I})\B_k,[h\mathbf{I},\B_k]\big]
+[(h\mathbf{I})\B_{k,i},\H_{k,i}(h\mathbf{I})]
\end{align*}
is actually an operator of order $0$ (since $\beta\leq 1$). Even though its operator norm can be independent of $k$ (via Hypothesis \ref{Bk-Assum-H} and Lemma \ref{Lemma:(pn qm) [P Q]}), it 
depends on $h$. Namely, for $k\ge1$, 
there is a constant $C=C(h)>0$ such that 
$$\norm{\left[(h\mathbf{I})\B_{k,i},\B_{k,i}^*(h\mathbf{I})\right]g}_{L^2}\leq C(h)\norm{g}_{L^2}.$$
Then \ref{Bk-Assum-[hB]} actually means that constant $C(h)$ can be replaced by $c\|h\|^2_{H^{\sigma}}$ with $\sigma\ge\sigma_0$ and $c>0$. This enables us to take summation $\sum_{k=1}^\infty$ (see \eqref{K4 needs Assum-1} and \eqref{K4 needs Assum-2} in the proof). Without \ref{Bk-Assum-[hB]}, even if $\sum_{k=1}^\infty \|b_k\|_{H^{s}}<\infty$, one can \textit{only} take summation of finitely many $k$ in \eqref{cancellation-PB2} since \eqref{K4 needs Assum-2} becomes
$\sum_{i=1}^m|N_{4,i}|
\leq C(b_{k})\|f\|^2_{H^{s}}$ for some constant $C(b_{k})>0$
and we do \textbf{not} \textit{a prior} know whether or not $\sum_{k=1}^\infty C(b_{k})<\infty$. For other qualitative research on second-order commutator in stochastic setting, we refer to
\cite{Holden-Karlsen-Pang-2022-arXiv}.

\end{Remark}

A broad class of operators satisfying \ref{Bk-Assum-[hB]}
are given by

\begin{Lemma}\label{Bk-Example-Lemma}
If $\B_k$ satisfies \ref{Bk-Assum-H} with $0\leq \beta\leq\frac12$ $\Longrightarrow$ $\B_k$ satisfies \ref{Bk-Assum-[hB]} with $\sigma_0=\frac{d}{2}+1$.
\end{Lemma}
\begin{proof}
Keep in mind that $1\le i\le m$ in the following. We note that
\begin{align*}
[h\mathbf{I}\B_{k,i},\B_{k,i}^*(h\mathbf{I})]
= -\left[h\mathbf{I}\B_{k,i},\B_{k,i}(h \mathbf{I})\right]
+\left[h\mathbf{I}\B_{k,i},\H_{k,i}(h \mathbf{I})\right]
:=\,&\Theta_1+\Theta_2.
\end{align*}
Direct computation shows that 
$$\Theta_1=
-h[\B_{k,i}^2,h \I]+[\B_{k,i},h \I](h\I\B_{k,i})+h [\B_{k,i},h\I]\B_{k,i}
:=\sum_{j=1}^3\Theta_{1,j},$$
$$\Theta_2=h[\B_{k,i}\H_{k,i},h\I ]+h^2 [\B_{k,i},\H_{k,i}]+
[h^2 \I,\H_{k,i}]\B_{k,i}:=\sum_{i=1}^3\Theta_{2,j}.$$
Again, \eqref{OP continuous} and \ref{Bk-Assum-H} give us
\begin{equation*}
\sup_{k\ge1}\norm{\B_{k,i}}_{\LL(H^{r+\beta};H^r)},\ \ 
\sup_{k\ge1}\norm{\H_{k,i}}_{\LL(H^r;H^r)}<\infty,\ \ r\in\R.
\end{equation*}
Remember the above estimate and let $\eta>\frac{d}{2}+1$.
By Lemmas \ref{LOP} and \ref{commutator:Taylor 2-n} (with $\sigma=\eta>r=2\beta$ and $q=0$) and $H^{\eta}\hookrightarrow W^{1,\infty}$, we see that
\begin{align*}
\|\Theta_{1,1}g\|_{L^2}\lesssim \norm{h}_{L^\infty}\norm{[\B_{k,i}^2,h \I]g}_{L^2}
\lesssim \norm{h}_{H^\eta}^2 \norm{g}_{L^{2}}.
\end{align*}
Similarly, using $H^{\eta}\hookrightarrow W^{1,\infty}$ and Lemma \ref{commutator:Taylor 2-n} (with $\sigma=\eta>r=\beta$ and $q=0$) and Lemma \ref{Lemma:product in Hs} (with $s_1=\beta-1$ and $s_2=\eta$), we arrive at
\begin{align*}
\|\Theta_{1,2}g\|_{L^2}\lesssim \norm{h }_{H^{\eta}}\norm{h \B_{k,i} g}_{H^{\beta-1}}
\lesssim \norm{h }_{H^{\eta}}\norm{h}_{H^{\eta}}\norm{\B_{k,i} g}_{H^{\beta-1}}\lesssim \norm{h}_{H^\eta}^2\norm{g}_{L^{2}},
\end{align*}
and 
\begin{align*}
\|\Theta_{1,3}g\|_{L^2}\lesssim \norm{h }_{L^{\infty}} \norm{h }_{H^{\eta}}\norm{ \B_{k,i} g}_{H^{\beta-1}}
\lesssim \norm{h}_{H^\eta}^2\norm{g}_{L^{2}}.
\end{align*}
Due to \ref{Bk-Assum-H}, as in the proof for Lemma \ref{Lemma:(pn qm) [P Q]}, the symbols of the product operator $\B_{k,i}\H_{k,i}$ is also bounded in $\S^{\beta}$. Hence we apply Lemma \ref{commutator:Taylor 2-n} (with $\sigma=\eta>r=\beta$ and $q=0$) to find
\begin{align*}
\|\Theta_{2,1}g\|_{L^2}\lesssim \norm{h}_{L^{\infty}}\norm{[\B_{k,i}\H_{k,i},h \I]g}_{L^{2}}
\lesssim \norm{h}_{H^{\eta}}\norm{h }_{H^{\eta}}\norm{g}_{H^{\beta-1}}\lesssim \norm{h}_{H^\eta}^2\norm{g}_{L^{2}}.
\end{align*}
Using Lemma \ref{Lemma:(pn qm) [P Q]} to $\Theta_{2,2}$ yields
\begin{align*}
\|\Theta_{2,2}g\|_{L^2}\lesssim \norm{h^2}_{L^{\infty}}\norm{[\B_{k,i},\H_{k,i}]g}_{L^{2}}
\lesssim \norm{h}_{H^{\eta}}^2\norm{g}_{H^{\beta-1}}\lesssim \norm{h}_{H^{\eta}}^2\norm{g}_{L^{2}}.
\end{align*}
Finally, Lemma \ref{commutator:Taylor 2-n} (with $\sigma=\eta>s=0$ and $q=0$) gives rise to
\begin{align*}
\|\Theta_{2,3}g\|_{L^2}\lesssim \norm{h^2}_{H^{\eta}}\norm{\B_{k,i} g}_{H^{-1}}
\lesssim \norm{h}_{H^\eta}^2\norm{g}_{H^{\beta-1}}\lesssim \norm{h}_{H^{\eta}}^2\norm{g}_{L^{2}}.
\end{align*}
Hence \ref{Bk-Assum-[hB]} holds true with $\sigma_0=\frac{d}{2}+1$. 
\end{proof}

\section{Local and global results}\label{Exsitence of regular solutions}
In this section we focus on \eqref{Cauchy problem-S} and we will use Theorems \ref{Cancel-Bk} and \ref{Cancel-Ak} with $m=d$. As before we simply write
$$H^s=H^s(\K^d;\R^d),\ \ \K=\R\ \text{or}\ \T.$$
We recall  the following estimates for  $F(\cdot)$:
\begin{Lemma}[\cite{Yan-Yin-2015-DCDS,Zhao-Yang-Li-2018-JMAA}]\label{F-EP Lemma}
Let $s>d/2$ with $d\geq 2$. 
The non-local term $F(\cdot)$ defined in \eqref{F-EP} satisfies
\begin{align*}
\|F(v)\|_{H^s}&\lesssim \|v\|_{W^{1,\infty}} \|v\|_{H^s},\ \ s>d/2+1,\ v\in H^s,\\
\|F(v_1)-F(v_2)\|_{H^s} &\lesssim\left(\|v_1\|_{H^s}+\|v_2\|_{H^s}\right)\|v_1-v_2\|_{H^s},\ \ s>d/2+1,\ v_1,\, v_2\in H^s\\
\|F(v_1)-F(v_2)\|_{H^s}
&\lesssim\left(\|v_1\|_{H^{s+1}}+\|v_2\|_{H^{s+1}}\right)\|v_1-v_2\|_{H^s},\ \ d/2+1\geq s> d/2, \ v_1,\, v_2\in H^{s+1}.
\end{align*}
\end{Lemma}

Let $J_n$ be the Friedrichs mollifier defined in  Appendix \ref{Section:Appendix} (cf. \eqref{Define Jn}). Then we have
\begin{Lemma}\label{Te uux+F(u)}
For all $\sigma>\frac{d}{2}+1$,
there is a constant $\varLambda=\varLambda(\sigma,d)>0$ such that 
\begin{align}
\left|\bIP{(u\cdot\nabla)u+F(u),u}_{H^{\sigma}}\right|\leq \varLambda \|u\|^2_{H^{\sigma}}\|u\|_{W^{1,\infty}},\ \ u\in H^{\sigma+1},\label{uux F 1}
\end{align}
\begin{align}
\left|\bIP{J_n [(u\cdot\nabla) u]+ J_n F(u), J_nu}_{H^{\sigma}}\right|\leq \varLambda \|u\|^2_{H^{\sigma}}\|u\|_{W^{1,\infty}},\ \ u\in  H^{\sigma}. \label{uux F 2}
\end{align}
\end{Lemma}
\begin{proof}
We only prove \eqref{uux F 2} since \eqref{uux F 1} can be proved in the same way.
Using Lemmas \ref{Lemma-Je}, \ref{Te commutator} and \ref{Kato-Ponce commutator estimate}, integration by parts and $H^s\hookrightarrow W^{1,\infty}$, we obtain that for some $\varLambda=\varLambda(\sigma,d)>0$,
\begin{align*}
&\left(\D^{\sigma}J_n\left[(u\cdot\nabla)u\right],\D^{\sigma}J_n u\right)_{L^2}\notag\\
=\,&\left(\left[\D^{\sigma},(u\cdot\nabla)\right]u,\D^{\sigma}J^2_n u\right)_{L^2}+
\left([J_n,(u\cdot\nabla)]D^{\sigma}u, \D^{\sigma}J_n u\right)_{L^2} +\left((u\cdot\nabla)\D^{\sigma}J_n u, \D^{\sigma}J_n u\right)_{L^2}\notag\\
\leq\, & \varLambda\left(\|u\|_{H^{\sigma}}\|\nabla u\|_{L^\infty}\|J_n u\|_{H^{\sigma}}
+\|u\|_{H^{\sigma}}\|\nabla u\|_{L^\infty}\|J_n u\|_{H^{\sigma}}
+\|J_n u\|^2_{H^{\sigma}}\|\nabla u\|_{L^\infty}\right)\notag\\
\leq\, & \varLambda\|u\|^2_{H^{\sigma}}\|u\|_{W^{1,\infty}}.
\end{align*}
Similarly, Lemma \ref{F-EP Lemma} implies
\begin{align*}
&\left(\D^{\sigma}J_n F(u),\D^{\sigma}J_n u\right)_{L^2}
\leq \varLambda\|u\|^2_{H^{\sigma}}\|u\|_{W^{1,\infty}}.
\end{align*}
Combining the above estimates gives \eqref{uux F 2}.
\end{proof}
To obtain a solution, we need the following 
\begin{Hypothesis}\label{H-Q}
Let $d\ge1$. 
We assume
\begin{equation}\label{define Qk}
\Q_k=a_k \A_k+b_k \B_k,\ \ a_kb_k=0,\ \ k\ge 1, 
\end{equation}
and $(b_k,\B_k)$ and $(a_k,\A_k)$ satisfy Hypotheses \ref{H-cancel-Bk} and \ref{H-cancel-Ak} with $m=d$, respectively.
\end{Hypothesis}

\begin{Hypothesis}\label{H-hk}
For all $k$,
$h_k:[0,\infty)\times H^s\ni (t,u)\mapsto h_k(t,u)\in H^s$ is continuous for $s>\frac{d}{2}+1$. Moreover, there is a function 
$K:[0,\infty)\times[0,\infty)\rightarrow(0,\infty)$ increasing in both variables such that
\begin{align*}
&\sum_{k=1}^{\infty}\|h_k(t,u)\|^2_{H^s}
\leq\, K(t,\|u\|_{\Wlip})(1+\|u\|^2_{H^s}),\ \ t\ge0,\ u\in H^s,\\
\sum_{k=1}^\infty \|h_k(t,&u)- h_k(t, v)\|_{ H^s}^2 
\le\, K(t, \|u\|_{ H^s}+\|v\|_{ H^s})\|u-v\|^2_{ H^s},\ \ t\ge0,\ u,v\in H^s.
\end{align*}
\end{Hypothesis}

Recall $(\beta, b_k)$ and $(\alpha,a_k)$ in Hypotheses \ref{H-cancel-Bk} and \ref{H-cancel-Ak}, respectively. Let
\begin{equation}\label{p0}
p_0:=\max\Big\{
\alpha{\bf 1}_{\{\|a_k\|_{l^2}>0\}},\, \beta{\bf 1}_{\{\sum_{k=1}^\infty\|b_k\|_{H^{s_0}}>0\}}
\Big\}.
\end{equation} 

The main results for \eqref{Cauchy problem-S} (or \eqref{Cauchy problem-Ito}) is the following
\begin{Theorem}\label{Thm-EP}
Let Hypotheses \ref{H-Q} and \ref{H-hk} be verified. Let $s>\frac{d}{2}+1+\max\{2p_0,1\}$ with $d\geq 2$. For any $H^s$-valued $\mathcal{F}_0$-measurable random variable $u_0$, 
\begin{enumerate}[label={ $\bf (\Roman*)$}]
\item \label{T1} \eqref{Cauchy problem-S} admits a unique maximal solution $(u,\tau^*)$ in the sense of Definitions \ref{pathwise solution definition}. Besides, $(u,\tau^*)$ satisfies $\p\big(u\in C([0,\tau^*);H^s)\big)=1$ and  
\begin{equation}\label{blow-up criterion}
\displaystyle{\bf 1}_{\left\{\limsup_{t\rightarrow \tau^*}\|u(t)\|_{H^s}=\infty\right\}}
={\bf 1}_{\left\{\limsup_{t\rightarrow \tau^*}\|u(t)\|_{W^{1,\infty}}=\infty\right\}}
\ \ \pas
\end{equation}

\item \label{T2} $u$ exists globally, i.e., $\p(\tau^*=\infty)=1$, if 
\begin{equation}\label{NE SPDE}
\limsup_{\|f\|_{H^{\eta}} \to \infty}
\frac{\Xi(T,f,\eta)}{2\varLambda\|f\|_{\Wlip}\|f\|_{H^{\eta}}^2} 
<-1,\ \ T\in(0,\infty),\ \ \eta\in\Big(\frac{d}{2}+1,\, s-\max\{2p_0,1\}\Big), 
\end{equation}
where $\varLambda$ is given in Lemma \ref{Te uux+F(u)} and
\begin{equation}\label{NE h-k}
\Xi(T,v,\sigma):=\sup_{t\in [0,T]}\sum_{k=1}^\infty \left(\left\|h_k(t,v)\right\|_{H^{\sigma}}^2 
- \frac{2\bIP{ h_k(t,v),v}_{H^{\sigma}}^2}{{\rm e}+ \|v\|_{H^{\sigma}}^2}\right),\ \ \sigma>\frac{d}{2}+1.
\end{equation}
\end{enumerate}

\end{Theorem}

The proof for Theorem \ref{Thm-EP} can be carried out in a way similar to \cite{Tang-Wang-2022-arXiv}. However, since the pseudo-differential operators in this paper are extended, here we also provide the details  and the proof is divided into three subsections.

\subsection{Approximation scheme and estimates}

For convenience, we recall that \eqref{Cauchy problem-S} is equivalent to \eqref{Cauchy problem-Ito}.
Because $a_kb_k=0$, we have 
$\Q_k^2=(a_k\A_k)^2+(b_k\B_k)^2$ and then we further rewrite \eqref{Cauchy problem-Ito} as

\begin{align}\label{Cauchy problem-Ito-AB}
\left\{\begin{aligned}
\d u=\,&\bigg\{-(u\cdot \nabla) u-F(u)+\frac12\sum_{k=1}^{\infty}\left[\left(a_k \A_k\right)^2 u+\left(b_k \B_k\right)^2 u\right] \bigg\}\d t\\
&+\sum_{k=1}^\infty \bigg\{\left(a_k\A_k \right)u \d \ol W_k 
+\left(b_k\B_k \right)u \d\hh W_k +  h_k(t,X(t))\d W_k(t)\bigg\},\ \  u\big|_{t=0}=u_0,\ \ t\ge0,
\end{aligned}\right.
\end{align} 
where $$\ol W_k(t)=\tt W_{2k-1}(t),\ \ \hh W_k(t)=\tt W_{2k}(t),\ \ k\ge1.$$
Let $\U$ be a separable Hilbert space with a complete orthonormal basis $\{e_k\}_{k\ge 1}$. Let  
\begin{equation}\label{GH}
\left\{\begin{aligned}
& G(u)=-(u\cdot\nn)u+ \frac12\sum_{k=1}^{\infty}\left[\left(a_k \A_k\right)^2 u+\left(b_k \B_k\right)^2 u\right],\ \ k\ge 1,\\
& H(t,u)e_{3k-2} := a_k\A_k u,\ \ H(t,u)e_{3k-1} := b_k \B_k X,\ \ h(t,u)e_{3k}:= h_k(t,u),\ \ k\ge 1,\\
& \W(t):=\sum_{k=1}^\infty \left(\ol W_k(t) e_{3k-2} +\hh W_k(t) e_{3k-1}+\tt W_k(t) e_{3k}\right).
\end{aligned}\right.
\end{equation}
With the above notations, \eqref{Cauchy problem-Ito-AB}  reduces to
\begin{equation}\label{Cauchy problem G H}
\d u=\,\left[G(u)-F(u)\right] \d t+H(t,u)\d \W,\ \
u\big|_{t=0}=\,u_{0},\ \ t>0.
\end{equation}
Let $d\geq 2$ and recall $p_0$ in \eqref{p0}. Let
$s>\frac{d}{2}+1+\max\{2p_0,1\}$. According to Hypotheses \ref{H-Q} and \ref{H-hk}, if $u\in H^s$, then $G(u)\in H^{(s-1)\wedge(s-2p_0)}$ and $H(t,u)\in \LL_2(\U;H^{s-p_0})$, while by Lemma \ref{F-EP Lemma}, $F(u)\in H^s$. To apply the theory for SDEs in Hilbert space, we need to mollify $G(u)$ and $H(t,u)$. To this end, we will use the mollifier $J_n$ defined in \eqref{Define Jn} and construct the following regularization
\begin{equation}\label{GH-n}
\left\{\begin{aligned}
& G_n(u):= -J_n [(J_n u\cdot\nn J_n u)]+\frac{1}{2} \sum_{k=1}^{\infty}J_n^3(a_k\A_k)^2J_n u+\frac{1}{2} \sum_{k=1}^{\infty}J_n^3(b_k\B_k)^2J_n u,\  k\ge 1,\\
& H_n(t,u)e_{3k-2} := J_n(a_k\A_k) J_n u,\ \ H_n(t,u)e_{3k-1} := J_n(b_k \B_k) J_n u,\ \ H_n(t,u)e_{3k}:=  h_k(t,u),\  k\ge 1.
\end{aligned}\right.
\end{equation} 

We also need a cut-off function to split the expectation. Hence for any $R>1$, we take a cut-off function $\chi_R\in C^{\infty}([0,\infty);[0,1])$ such that 
\begin{equation}\label{chi-R}
\chi_R(y)=1\ \text{for}\ |y|\le R,\ \text{and}\ \chi_R(y)=0\ \text{for}\ y>2R,
\end{equation}
and then we consider
\begin{align}\label{Appro Cut-off}
\d u=\, \chi^2_R\big(\|u(t)-u_0\|_{\Wlip}\big)\left[G_n(u)-F(u)\right] \d t+\chi^2_R\big(\|u(t)-u_0\|_{\Wlip}\big)H_n(t,u)\d \W,\ \ 
u\big|_{t=0}=\,u_{0}.\end{align}

Keep in mind that  $p_0$ is in \eqref{p0} and we have the following 

%

\begin{Lemma}\label{Gn Hn Lemma} Let $d\geq 2$ and $s>\frac{d}{2}+1+\max\{2p_0,1\}$.  Let Hypotheses \ref{H-Q} and 
\ref{H-hk} be verified.  For any $R>1$, $n\ge1$ and $\F_0$-measurable $H^s$-valued random variable $u_0$, \eqref{Appro Cut-off} has a unique global solution $u_n=u_n^{(R)}(t,x)\in C([0,\infty);H^s)$ such that for some function 
$V:[0,\infty)\times[0,\infty)\rightarrow(0,\infty)$ increasing in both variables,
\begin{align}
\sup_{n\ge1}\E\Big[\sup_{t\in[0,T]}\|u_n\|^2_{H^s}\big|\F_0\Big]
\leq V(T,2R+\|u_0\|_{\Wlip})(1+\|u_0\|_{H^s}^2),\ \ T,R>0.\label{un uniform bound} 
\end{align} 
\end{Lemma} 

\begin{proof} 
By Lemma \ref{Lemma-Je}, it is easy to see that $G_n:H^s\to H^s$ and $H_n:[0,\infty)\times H^s\to \LL_2(\U;H^s)$ is locally Lipschitz.  Hence for any deterministic initial data \eqref{Appro Cut-off} admits a unique solution,  and the solution is continuous in $H^s$  (See for instance \cite{Prevot-Rockner-2007-book,Wang-2013-Book}).  
Combining this and the fact that
$\F_0$ is independent of the equation, we see that for any $\F_0$-measurable $H^s$-valued random variable $u_0$, 
\eqref{Appro Cut-off} also admits a unique solution $u_n=u_n(t)$, which is continuous in $H^s$. 

Now we verify \eqref{un uniform bound}. To begin with, we can infer from
\eqref{GH-n}, Hypothesis  \ref{H-hk}, Lemma \ref{Lemma-Je}, Theorem \ref{Cancel-Bk} and \ref{Cancel-Ak} (with $\mathcal{P}_n\equiv \D^s$, $f=J_n u$) that
\begin{align*}
&\sum_{k=1}^\infty \IP{H_n(t,u_n)e_{k}, u_n }_{H^s}^2\\
= \, &\sum_{k=1}^\infty
\left(
\IP{ J_n(a_k\A_k)J_n u_n,u_n }_{H^s}
+\IP{ J_n(b_k\B_k)J_n u_n,u_n }_{H^s}
+\IP{ h_k(t,u_n), u_n }_{H^s}^2\right) \\
\lesssim \,& \left(1+K^2(t,\|u_n\|_{\Wlip})\right)(1+\|u_n\|^4_{H^s}).
\end{align*}
Besides, it follows from \eqref{GH-n} that
\begin{align*}
&2\IP{ G_n(u_n),u_n }_{H^s}+\|H_n(t,u_n)\|^2_{\LL_2(\U;H^s)}\\
=\,& -2\IP{ J_n[(J_nu_n\cdot\nn)J_n u_n],u_n }_{H^s}
+\sum_{k=1}^{\infty}\IP{ J_n^3(a_k\A_k)^2J_n u_n,u_n }_{H^s}
+\sum_{k=1}^{\infty}\IP{ J_n^3(b_k\B_k)^2J_n u_n,u_n }_{H^s}\\
&+\sum_{k=1}^{\infty}\|J_n(a_k\A_k)J_n u_n\|^2_{H^s}
+\sum_{k=1}^{\infty}\|J_n(b_k\B_k)J_n u_n\|^2_{H^s}
+\sum_{k=1}^{\infty}\|h_k(t,u_n)\|^2_{H^s}\\
:=\,& \sum_{i=1}^{6} I_i.
\end{align*}
On account of  Hypothesis \ref{H-hk}, Lemmas \ref{Lemma-Je} and \ref{Kato-Ponce commutator estimate}, it holds that
\begin{align*}
|I_1|
\leq 2\left|\IP{ [(J_nu_n\cdot\nn)J_n u_n], J_nu_n }_{H^s}\right|
\lesssim \|u_n\|_{\Wlip}\|u_n\|^2_{H^s},\ \ |I_6|\leq K(t,\|u_n\|_{\Wlip})(1+\|u_n\|^2_{H^s}).
\end{align*}
It follows from Theorem \ref{Cancel-Bk} and \ref{Cancel-Ak} (with $\mathcal{P}_n=\D^sJ_n$ and $f=J_n u_n$) that
\begin{align*}
&\left|I_2+I_4\right|
+\left|I_3+I_5\right|
\lesssim\,
\|J_n u\|_{H^s}^2\leq \|u_n\|_{H^s}^2. 
\end{align*}
By the above estimates and It\^o's formula, we obtain that for some  function $\tt V:[0,\infty)\times[0,\infty)\rightarrow(0,\infty)$,
\begin{align*} 
{\rm d}\|u_n(t)\|^2_{H^s}-\d M_n(t) 
= \chi^2_R \big(\|u_n(t)-u_0\|_{\Wlip} \big)\bigg\{ \sum_{i=1}^{6} I_i\bigg\} \d t 
\le  \tt V(t,2R+\|u_0\|_{\Wlip})(1+\|u_n(t)\|_{H^s}^2)\d t,\end{align*}
where 
$$ \d M_n(t):=\, 2\chi^2_R \big(\|u_n(t) - u_0\|_{\Wlip}\big)\bIP{u_n(t), \, H_n(t,u_n)\d \W(t)}_{H^s}$$ satisfies 
$$\d \bIP{M_n (t)} \le \tt V(t,2R+\|u_0\|_{\Wlip})(1+\|u_n(t)\|_{H^s}^4\big)\d t.$$
Define
$$\tau_n:=\lim_{N\to\infty}\tau_{n,N},\ \ 
\tau_{n,N}:=\inf\big\{t\ge 0: \|u_n(t)\|_{H^s}\ge N\big\},\ \ n,\, N\ge 1.$$
For any $T>0$,  we  use BDG's inequality to find constants $c_1,c_2>0$  such that for any $t\in [0,T]$ and $N\ge 1$, 
\begin{align*}
&\E\bigg[\sup_{t'\in[0,t\land\tau_{n,N}]}\|u_n(t)\|^2_{H^s}\Big|\F_0\bigg]- \|u_0\|^2_{H^s}\\
\le \,&
c_1 \E\bigg[\bigg(\int_0^{t\land\tau_{n,N}}\tt V(t',2R+\|u_0\|_{\Wlip})
\Big(1+\|u_n(t')\|_{H^s}^4\Big)\d t'\bigg)^{\frac 1 2}\bigg|\F_0\bigg] \\
&+ c_1 \E\bigg[\int_0^{t\land\tau_{n,N}}\tt V(t',2R+\|u_0\|_{\Wlip}) \Big(1+\|u_n(t')\|_{H^s}^2\Big)\d t'\bigg|\F_0\bigg]\\
\le\, & \frac 1 2 \E\Big[\sup_{t'\in [0,t\land\tau_{n,N}]} \|u_n(t')\|_{H^s}^2\Big|\F_0\Big] + c_2 + c_2 \int_0^t\tt V(t',2R+\|u_0\|_{\Wlip}) \E\bigg[\sup_{r\in [0,t'\land \tau_{n,N}]} \|u_n(r)\|_{H^s}^2\Big|\F_0\bigg]\d t'.
\end{align*}
By Gr\"{o}nwall's inequality, there exists a function $V: [0,\infty)\times [0,\infty)\to (0,\infty)$ increasing in both variables such that
\begin{equation}\label{EXN} \E\Big[\sup_{t\in[0,T\land\tau_{n,N}]}\|u_n(t)\|^2_{H^s}\Big|\F_0\Big]\le V(T,2R+\|u_0\|_{\Wlip})(1+\|u_0\|_{H^s}^2),\ \ n,N\ge 1.
\end{equation} 
This implies that for all $n,N\ge1$,
\begin{align*}
\p\left(\tau_{n,N}<T\big|\F_0\right)\le 
\p\bigg(\sup_{t\in[0,T\land\tau_{n,N}]}\|u_n(t)\|_{H^s} 
\ge N\bigg|\F_0\bigg) 
\le \frac{V(T,2R+\|u_0\|_{\Wlip})(1+\|u_0\|_{H^s}^2)}{N^2},
\end{align*} 
so that $\tau_n=\lim_{N\to\infty}\tau_{n,N}$ satisfies
$$\p\big(\tau_n<T\big|\F_0\big)\le \lim_{N\to\infty} \p(\tau_{n,N}<T|\F_0) =0.$$
Hence, $\p\big(\tau_n\ge T\big)= \E\big[\p(\tau_n\ge T|\F_0)\big]=1$ for all $T>0$, which means  $\p(\tau_n=\infty)=1$. Letting $N\to\infty$ in \eqref{EXN} yields 
\eqref{un uniform bound}. 
\end{proof}

\subsection{Solving the cut-off problem}
In this section we will take limit in \eqref{Appro Cut-off} to find a solution to the following cut-off problem
\begin{equation} \label{Cut-off problem}
\d u=\chi^2_R\big(\|u(t)-u_0\|_{\Wlip}\big)\left[G(u)-F(u)\right] \d t+\chi^2_R\big(\|u(t)-u_0\|_{\Wlip}\big)H(t,u)\d \W,\ \
u\big|_{t=0}=\,u_{0},\ \ t>0,
\end{equation}
where  $\chi_R$, $F$ and $(G,H)$ are given in \eqref{chi-R}, \eqref{F-EP} and \eqref{GH}, respectively.

\begin{Lemma}\label{Lemma:Convergence of X-n}
Let $u_n$ be the approximate solution as in  Lemma \ref{Gn Hn Lemma}.
For any $n,l\ge 1$, $\delta_0\in\big(\frac{d}{2}+1,s-\max\{2p_0,1\}\big)$ and $T,\, N>0$, let
$$\tau^{n,l,T}_N:= T\land \inf\big\{t\ge 0: \|u_n(t)\|_{H^s}\lor\|u_l(t)\|_{H^s}\ge N\big\}.$$
Then $\pas$,
\begin{equation} \label{Cauchy in E-M}
\lim_{n\rightarrow\infty}\sup_{l\ge n}
\E\bigg[\sup_{t\in[0,\tau^{n,l,T}_N]} \|u_n(t)-u_l(t)\|^2_{H^{\delta_0}}\bigg|\F_0\bigg]=0,\ \ \ T,\, N>0.
\end{equation}
\end{Lemma}

\begin{proof}
Let $v_{n,l}=u_n-u_l$ for $n,l\ge 1$. 
We have that
\begin{equation}\label{Z-n-m equation}
\d v_{n,l}(t)=\sum_{i=1}^4 A_i^{n,l}(t)\d t+\sum_{i=1}^2 B_i^{n,l}(t) {\rm d}\W(t),\ \ v_{n,l}(0)=0, 
\end{equation}
where
\begin{align*}
&A_1^{n,l}(t) := -\left[\chi^2_R\big(\|u_n(t)-u_0\|_{\Wlip} \big)
-\chi^2_R\big(\|u_l(t)-u_0\|_{\Wlip}\big)\right]F(t,u_n(t)),\\
&A_2^{n,l}(t):=-\chi^2_R\big(\|u_l(t)-u_0\|_{\Wlip}\big)\left[F(t,u_n(t))- F(t,u_l(t))\right],\\
&A_3^{n,l}(t):=\left[\chi^2_R\left(\|u_n(t)-u_0\|_{\Wlip}\right)
-\chi^2_R\big(\|u_l(t)-u_0\|_{\Wlip}\big)\right]G_n(t,u_n(t)),\\
&A_4^{n,l}(t):= \chi^2_R\big(\|u_l(t)-u_0\|_{\Wlip}\big) \left[G_n(t,u_n(t))- G_l(t,u_l(t))\right],\end{align*} and 
\begin{align*}
&B_{1}^{n,l}(t):= \left[\chi_R(\|u_n(t)-u_0\|_{\Wlip})-\chi_R\big(\|u_l(t)-u_0\|_{\Wlip}\big)\right]H_n(t,u_n(t)),\\
&B_2^{n,l}(t):= \chi_R\big(\|u_l(t)-u_0\|_{\Wlip}\big)
[H_n(t,u_n(t))-H_l(t,u_l(t))]. 
\end{align*}
By the It\^{o} formula, we obtain 
\begin{align*}
\d\left\|v_{n,l}(t)\right\|^2_{H^{\delta_0}} = \, & 2\sum_{i=1}^2\bIP{v_{n,l}(t),\ B_i^{n,l}(t) \d\W(t) }_{H^{\delta_0}}\\
&+ \bigg\{\sum_{i=1}^2\left\|B_i^{n,l}(t)\right\|_{\mathcal L_2(\U;H^{\delta_0})}^2 + 2 \sum_{i=1}^4 \bIP{A_i^{n,l}(t), v_{n,l}(t)}_{H^{\delta_0}}\bigg\}\d t. 
\end{align*}
\textbf{Claim:} There is  a function $Q: [0,\infty)\times [0,\infty)\to (0,\infty)$ increasing in both variables and a function $\lambda: \mathbb N\times\mathbb N \rightarrow[0,\infty)$ with $\displaystyle \lim_{n,l\to\infty} 
\lambda_{n,l}=0$ such that  for all $n,l\ge 1$,
\begin{align}
&\sum_{i=1}^2\sum_{k=1}^\infty \bIP{v_{n,l}(t), B_i^{n,l}(t) e_k}_{H^{\delta_0}}^2 \le Q(t,N) \left\|v_{n,l}(t)\right\|_{H^{\delta_0}}^2
\left\{\lambda_{n,l}+\left\|v_{n,l}(t)\right\|_{H^{\delta_0}}^2\right\},\ \ t\in [0,\tau^{n,l,T}_N],\label{Convergence-1}\\
\sum_{i=1}^2&\left\|B_i^{n,l}(t)\right\|_{\mathcal L_2(\U; H^{\delta_0})}^2 + 2 \sum_{i=1}^4 \bIP{A_i^{n,l}(t),\, v_{n,l}(t)}_{H^{\delta_0}}\le Q(t,N)\left\{\lambda_{n,l}+\left\|v_{n,l}(t)\right\|_{H^{\delta_0}}^2\right\}, \ \ t\in [0,\tau^{n,l,T}_N].
\label{Convergence-2}
\end{align}
If \eqref{Convergence-1} and \eqref{Convergence-2} hold true, then
we use BDG's inequality to \eqref{Z-n-m equation} to find constants $a_1,a_2>0$ depending on $N$ and $T$ such that for all $n,l\ge 1$ and $t\in [0,N]$, 
\begin{align} &\E\bigg[\sup_{t'\in [0,t\land \tau^{n,l,T}_N]} \left\|v_{n,l}(t')\right\|_{H^{\delta_0}}^2\bigg|\F_0\bigg]\notag\\
\le\, & a_1 \E \bigg[\int_0^{t\land \tau^{n,l,T}_N}Q(t',N)\Big\{\lambda_{n,l}+\left\|v_{n,l}(t')\right\|_{H^{\delta_0}}^2\Big\}\d t'\bigg|\F_0\bigg]\notag\\
&+ a_1 \E\bigg[\bigg(\int_0^{t\land \tau^{n,l,T}_N}Q(t',N)\left\|v_{n,l}(t')\right\|_{H^{\delta_0}}^2\Big\{\lambda_{n,l}
+\left\|v_{n,l}(t')\right\|_{H^{\delta_0}}^2\Big\}\d t'\bigg)^{\frac 1 2}\bigg|\F_0\bigg]\notag\\
\le\, & \frac 1 2 \E\bigg[\sup_{t'\in [0,t\land \tau^{n,l,T}_N]} \left\|v_{n,l}(t')\right\|_{H^{\delta_0}}^2\bigg|\F_0\bigg]+ a_2 \lambda_{n,l}\notag\\ 
&+ a_2  \int_0^{t} Q(t',N)\E\bigg[\sup_{r\in [0,t'\land\tau^{n,l,T}_N]} \|v_{n,l}^{(R)}(r)\|_{H^{\delta_0}}^2\bigg|\F_0\bigg]\d t'.\label{Convergence-Ito}
\end{align}
By Gr\"onwall's inequality and noting $\lambda_{n,l}\to 0$ as $n,l\to\infty$, we prove \eqref{Cauchy in E-M}. Therefore now it suffices to prove \eqref{Convergence-1} and \eqref{Convergence-2}.  

We only prove \eqref{Convergence-2} since \eqref{Convergence-1} can be verified similarly. We note that $\chi_R(\cdot)$ is bounded and Lipschitz, $F(\cdot)$ is locally Lipschitz (cf. Lemma \ref{F-EP Lemma}) and $H^{\delta_0}\hookrightarrow \Wlip$. Then we use Hypothesis \ref{H-Q}, \eqref{GH-n} and Lemma \ref{Lemma-Je} to obtain that for all $n,l\ge 1$,
\begin{equation*}
\left\|B_1^{n,l}(t)\right\|_{\mathcal L_2(\U; H^{\delta_0})}^2 + 2 \sum_{i=1}^3 \IP{A_i^{n,l}(t),\, v_{n,l}(t)}_{H^{\delta_0}}\le Q(t,N)\left\|v_{n,l}(t)\right\|_{H^{\delta_0}}^2,\ \ t\in [0,\tau^{n,l,T}_N]
\end{equation*}
for some increasing function $Q: [0,\infty)\times [0,\infty)\to (0,\infty)$ increasing in both variables. 
Once again, since $\chi_R(\cdot)\le 1$,   we only need to prove that for all $n,l\ge 1$ and $t\in [0,\tau^{n,l,T}_N]$, 
\begin{align}
2\bIP{ G_n(u_n)-G_l(u_l), v_{n,l} }_{H^{\delta_0}}+ 
\big\| H_n(t,u_n)-H_l(t,u_l)\big\|^2_{\LL_2(\U;H^{\delta_0})}  
\le\,  
Q(t,N)
\left\{\lambda_{n,l}+\left\|v_{n,l}(t)\right\|_{H^{\delta_0}}^2\right\}.\label{Convergence-2-Q}
\end{align}
To this end, we find
$$2\bIP{ G_n(u_n)-G_l(u_l), v_{n,l} }_{H^{\delta_0}}+ 
\big\| H_n(t,u_n)-H_l(t,u_l)\big\|^2_{\LL_2(\U;H^{\delta_0})} =\Psi_{1} +\sum_{k=1}^\infty \sum_{i=2}^6\Psi_{i,k},$$ where 
\begin{align*} 
\Psi_{1} =\Psi^{n,l}_{1} :=\, &2\IP{J_n[(J_n u_n\cdot\nn)J_n u_n]-J_l[ (J_l u_l\cdot\nn)J_l u_l],\, u_n-u_l}_{H^{\delta_0}}\\
\Psi_{2,k}=\Psi^{n,l}_{2,k}:=\, & \IP{J_n^3(a_k\A_k)^2J_n u_n-J_l^3(a_k\A_k)^2J_l u_l,\, u_n-u_l }_{H^{\delta_0}},\\ 
\Psi_{3,k}=\Psi^{n,l}_{3,k}:=\, & \IP{J_n^3(b_k\B_k)^2J_n u_n-J_l^3(b_k\B_k)^2J_l u_l,\,  u_n-u_l}_{H^{\delta_0}}, \\
\Psi_{4,k}=\Psi^{n,l}_{4,k}:=\, & \| h_n(t,u_n)e_{3k-2}-h_l(t,u_l)e_{3k-2}\|^2_{H^{\delta_0}},\\  
\Psi_{5,k}=\Psi^{n,l}_{5,k}:=\, & \| h_n(t,u_n)e_{3k-1}-h_l(t,u_l)e_{3k-1}\|^2_{H^{\delta_0}}, \\
\Psi_{6,k}=\Psi^{n,l}_{6,k}:=\, &\| h_k(t,u_n)-h_k(t,u_l)\|^2_{H^{\delta_0}}.  
\end{align*} 
For $\Psi_1$, one can show that for $\epsilon\in(0,s-1-\delta_0)$,
\begin{align*}
|\Psi_{1} |\lesssim
\left((\|u_n\|_{H^{s}}+\|u_l\|_{H^{s}})^4+1\right)\left(\|v_{n,l}\|^2_{H^{\delta_0}}
+(l\land n)^{-2(s-1-\delta_0-\epsilon)}\right).
\end{align*}
The proof for this estimate is similar to \cite[Lemma 3.1]{Tang-Yang-2022-AIHP}, 
and here we omit the details to save space.
It suffices to estimate the other two terms. To control  $\sum_{k=1}^\infty \left\{\Psi_{3,k}+\Psi_{5,k}\right\}$, we  find
$$
\Psi_{3,k}=\sum_{j=1}^{3}\Psi_{3,k,j},\ \ \ \Psi_{5,k}=\sum_{i,j=1}^{3}\bIP{\Psi_{5,k,i},\Psi_{5,k,j}}_{H^{\delta_0}},$$
where 
\begin{equation*}
\begin{cases}
\Psi_{3,k,1}:= \bIP{(J^3_n-J^3_l)(b_k\B_k)^2 J_n u_l, v_{n,l}}_{H^{\delta_0}},\ \ 
&\Psi_{5,k,1}:=\, (J_n-J_l)(b_k\B_k)J_n u_n,\\
\Psi_{3,k,2}:= \bIP{J^3_l(b_k\B_k)^2 (J_n-J_l) u_l, v_{n,l}}_{H^{\delta_0}},\ \
&\Psi_{5,k,2}:=\, J_l(b_k\B_k) (J_n-J_l) u_n,\\
\Psi_{3,k,3}:= \bIP{J^3_l(b_k\B_k)^2 J_l v_{n,l}, v_{n,l}}_{H^{\delta_0}},\ \
&\Psi_{5,k,3}:=\, J_l(b_k\B_k) J_l v_{n,l}.
\end{cases}
\end{equation*}
By Hypothesis \ref{H-Q} and Lemma \ref{Lemma-Je}, we have for any $\epsilon\in(0,s-2p_0-\delta_0)$,
\begin{equation*}         
\sum_{k=1}^\infty\Psi_{3,k,1},\ 
\sum_{k=1}^{\infty}\Psi_{3,k,2},\ 
\sum_{k=1}^{\infty}\sum_{i\in \{1,2\}}
\bIP{\Psi_{5,k,i},\Psi_{5,k,3}}_{H^{\delta_0}}
\lesssim\, (l\land n)^{-(s-2p_0-\delta_0-\epsilon)} 
\norm{u_n }_{H^s}\|v_{n,l}\|_{H^{\delta_0}},              
\end{equation*}
\begin{equation*}
\sum_{k=1}^{\infty}\sum_{i,j\in \{1,2\}}
\bIP{ \Psi_{5,k,i},\Psi_{5,k,j} }_{H^{\delta_0}} 
\lesssim\, (l\land n)^{-2(s_0-2p_0-\delta_0-\epsilon)} 
\norm{u_n}^2_{H^s},                            
\end{equation*}
Then we apply  Proposition \ref{Cancel-Bk} (with $s=\delta_0$, $\P_n=\D^{\delta_0}J_l$ and $f=J_l v_{n,l}$) to find 
$$\sum_{k=1}^{\infty}\left\{\Psi_{3,k,3}+\IP{\Psi_{5,k,3},\Psi_{5,k,3}}_{H^{\delta_0}}\right\}\lesssim \|v_{n,l}\|^2_{H^{\delta_0}}.$$
Hence  we find an increasing function $Q: [0,\infty)\times [0,\infty)\to (0,\infty)$ increasing in both variables such that
$$\sum_{k=1}^\infty \left\{\Psi_{3,k}+\Psi_{5,k}\right\}\lesssim Q(t,N)\big\{(l\land n)^{-(s_0-2p_0-\delta_0-\epsilon)}+\|v_{n,l}\|_{H^{\delta_0}}^2\big\},\ \ n,\,l\ge 1,\  t\in [0,\tau^{n,l,T}_N].$$
Similarly,   the same estimate holds for $\sum_{k=1}^{\infty}\left\{\Psi_{4,k}+\Psi_{6,k}\right\}$.  Obviously, the desired upper bound of $\Psi_{6,k}$ follows from Hypothesis \ref{H-hk}.
In conclusion, \eqref{Convergence-2-Q} holds true. 
\end{proof}

\begin{Lemma}\label{un-u pas}
Let$u_n$ be the approximate solution as in  Lemma \ref{Gn Hn Lemma}.
There exists an $\mathcal{F}_t$-progressive measurable $H^s$-valued process $u(t)=(u^{(R)}(t))_{t\ge 0}$ such that, up to a subsequence, $\p$-{\rm a.s.},
\begin{equation}\label{Xn to X}
u_n\xrightarrow[]{n\to \infty}u \ {\rm in}\ C([0,\infty);H^{\delta_0}).
\end{equation}
\end{Lemma}

\begin{proof} 
For any $T>0$, $N\ge 1$ and $\epsilon>0$, by using \eqref{un uniform bound} in Lemma \ref{Gn Hn Lemma} and Chebyshev's inequality, 
we have
\begin{align*} 
&\p(\tau^{n,l,T}_N<T|\F_0)\\
\le\, & \p\bigg(\sup_{t\in [0,T]} \|u_n(t)\|_{H^s}\ge N\bigg|\F_0\bigg)+ \p\bigg(\sup_{t\in [0,T]} \|u_l(t)\|_{H^s}\ge N\bigg|\F_0\bigg)\\
\le\, & \frac {2 V(T,2R+\|u_0\|_{\Wlip})(1+\norm{u_0}^2_{H^s})}{N^2}.
\end{align*} 
Since $\tau^{n,l,T}_N\le T$ $\p$-a.s.,
for any $T>0,\, N>\ge1$, $n,\, l\ge 1$, we have
\begin{align*}
&\p\bigg(\sup_{t\in[0,T]}\|u_n(t)-u_l(t)\|_{H^{\delta_0}}>\epsilon\bigg|\F_0\bigg)\\
\le\, & \p\left(\tau^{n,l,T}_N<T\big|\F_0\right) + \p\bigg(\sup_{t\in[0,\tau^{n,l,T}_N]}\|u_n(t)-u_l(t)\|_{H^{\delta_0}}>\epsilon\bigg|\F_0\bigg)\\
\leq\, &\frac {2 V(T,2R+\|u_0\|_{\Wlip})(1+\norm{u_0}^2_{H^s})}{N^2}
+\p\bigg(\sup_{t\in[0,\tau^{n,l,T}_N]}\|u_n(t)-u_l(t)\|_{H^{\delta_0}}>\epsilon\bigg|\F_0\bigg).
\end{align*} On account of Lemma \ref{Lemma:Convergence of X-n}, we first let $n,l\to\infty$ and then $N\to\infty$ to find
$$ \lim_{n,l\rightarrow\infty} \p\bigg(\sup_{t\in[0,T]}\|u_n(t)-u_l(t)\|_{H^{\delta_0}}>\epsilon\bigg|\F_0\bigg)=0,\ \ \epsilon,\,T>0.$$ 
According to the reverse Fatou lemma, this gives rise to
\begin{align*} &\limsup_{n,l\rightarrow\infty} \p\bigg(\sup_{t\in[0,T]}\|u_n(t)-u_l(t)\|_{H^{\delta_0}}>\epsilon\bigg)\\
= \, &\limsup_{n,l\rightarrow\infty} \E\bigg[\p\bigg(\sup_{t\in[0,T]}\|u_n(t)-u_l(t)\|_{H^{\delta_0}}>\epsilon\bigg|\F_0\bigg)\bigg]\\
\le \, & \E\bigg[\limsup_{n,l\rightarrow\infty} \p\bigg(\sup_{t\in[0,T]}\|u_n(t)-u_l(t)\|_{H^{\delta_0}}>\epsilon\bigg|\F_0\bigg)\bigg]=0,\ \ \epsilon,\, T>0.
\end{align*} 
Therefore, up to a subsequence, \eqref{Xn to X} holds for certain progressively measurable process $u$ on $H^s$.
\end{proof}

\begin{Lemma}\label{cut-off solution} Let $d\geq 2$ and $s>\frac{d}{2}+1+\max\{2p_0,1\}$.  Let Hypotheses \ref{H-Q} and 
\ref{H-hk} hold.  For any $R>1$, $n\ge1$ and $\F_0$-measurable $H^s$-valued random variable, \eqref{Cut-off problem} has a unique global solution $u=u^{(R)}$ such that for any $T>0$, 
\begin{equation}\label{limit continuous}
\p\big(u\in C([0,T];H^s)\big)=1,
\end{equation}
and
\begin{equation}\label{X L2 bound}
\E\Big[\sup_{t\in [0,T]} \|u(t)\|_{H^s}^2\big|\F_0\Big]\le V(T,2R+\|u_0\|_{\Wlip})(1+\norm{u_0}^2_{H^s}),
\end{equation} 
where $V$ is given in Lemma {\rm \ref{Gn Hn Lemma}}.
\end{Lemma}

\begin{proof}
For any $R\ge 1$,
by Lemma \ref{un-u pas}, we can take limit to see that the limit process $u$ obtained in
Lemma \ref{un-u pas} is a solution to \eqref{Cut-off problem}. Uniqueness of $u$ can be obtained in the same way as we estimate \eqref{Convergence-Ito}.
Besides, \eqref{X L2 bound} comes from
\eqref{Xn to X} and
\eqref{un uniform bound}.

Now we prove \eqref{limit continuous}.
By \eqref{Xn to X}, we know that $u\in C([0,T];H^{\delta_0})$, which, together with the fact that $H^s\hookrightarrow H^{\delta_0}$ is dense, means that $u$ is weakly continuous in $H^{s}$. In order to prove \eqref{limit continuous}, 
we only need to prove that $[0,T]\ni t\mapsto\|u(t)\|_{H^s}$ is continuous almost surely. Let
\begin{equation}\label{TNN}
\tau_N:=N\land \inf\big\{t\ge 0: \|u(t)\|_{H^s}\ge N\big\},\ \ N\ge 1.\end{equation}
Note that \eqref{X L2 bound} implies $\lim_{N\to\infty} \tau_N=\infty$ $\pas$
It suffices to prove 
\begin{equation}\label{CTT} 
\|u(\cdot)\|_{H^s}\in C([0,\tau_N\wedge T];\R),\ \ \ N\ge 1.
\end{equation} 
However, since we only know $u\in H^s$ (by \eqref{X L2 bound}), one cannot use  It\^{o}'s formula to $\|u\|^2_{H^s}$ since $\IP{H(t,u)e_{k}, u }_{H^s}$ and $\bIP{\left[G(u)-F(u)\right], u }_{H^s}+\|H(t,u)\|^2_{\LL_2(\U;H^s)}$ are not well-defined.
Then we apply Lemma \ref{Te uux+F(u)}, 
\eqref{GH}, Hypothesis  \ref{H-hk}, Lemma \ref{Lemma-Je}, Theorem \ref{Cancel-Bk} and \ref{Cancel-Ak} (with $\mathcal{P}_n= \D^sJ_n$, $f=u$) to find
\begin{align*}
&\sum_{k=1}^\infty \IP{J_n H(t,u)e_{k}, J_n u }_{H^s}^2\\
= \, &\sum_{k=1}^\infty
\left(
\IP{ J_n(a_k\A_k) u, J_nu }_{H^s}
+\IP{ J_n(b_k\B_k) u,J_n u }_{H^s}
+\IP{ J_n h_k(t,u), J_n u }_{H^s}^2\right) \\
\lesssim \,& \left(1+K^2(t,\|u\|_{\Wlip})\right)(1+\|u\|^4_{H^s}),
\end{align*}
and 
\begin{align*}
&\left|2\bIP{J_n \left[G(u)-F(u)\right],J_n u }_{H^s}+\|J_n H(t,u)\|^2_{\LL_2(\U;H^s)}\right|\\
\leq\,& 2\varLambda \|u\|^2_{H^s}\|u\|_{W^{1,\infty}}
+\left|\sum_{k=1}^{\infty}\IP{ J_n(a_k\A_k)^2 u, J_nu }_{H^s}
+\sum_{k=1}^{\infty}\|J_n(a_k\A_k)u\|^2_{H^s}\right|\\
&+\left|\sum_{k=1}^{\infty}\IP{ J_n(b_k\B_k)^2 u, J_n u }_{H^s}
+\sum_{k=1}^{\infty}\|J_n(b_k\B_k) u\|^2_{H^s}\right|
+\sum_{k=1}^{\infty}\|h_k(t,u)\|^2_{H^s}\\
\lesssim\,&  \left(1+\|u\|_{\Wlip}+K^2(t,\|u\|_{\Wlip})\right)(1+\|u\|^4_{H^s}).
\end{align*}
Therefore,  by It\^{o}'s formula to $\|J_n u\|^2_{H^s}$ for \eqref{Cut-off problem}, for any $n,\,N\ge 1$ we find a martingale $M_t^{(n)}$ such that for some constant  $Q_N>0$ such that 
\begin{equation}\label{C0T} -Q_N \d t\le \d \|J_n u(t)\|_{H^s}^2 + \d M^{(n)}(t) \le Q_N \d t,\ \
\d \bIP{M^{(n)}}(t)\le Q_N \d t,\ \ t\in [0,\tau_N],\ \ n\ge 1.\end{equation}
That is to say, for some constant $C_N>0$, 
$$\E\Big[\big|\|J_n u(t\land\tau_N)\|_{H^s}^2-\|J_n u(t'\land\tau_N)\|_{H^s}^2\big|^4\Big]\le C_N|t-t'|^2,\ \ \ t,t'\ge 0,\  \ n\ge 1.$$
By Lemma \ref{Lemma-Je} and Fatou's lemma with $n\to\infty$, we derive 
$$\E\Big[\big|\| u(t\land\tau_N)\|_{H^s}^2-\|  u(t'\land\tau_N)\|_{H^s}^2\big|^4\Big]\le C_N|t-t'|^2,\ \ t,s\ge 0.$$
From this and Kolmogorov's continuity theorem, we obtain \eqref{CTT}.  
\end{proof} 

\subsection{Finish the proof for Theorem \ref{Thm-EP}}

Now we are in the position to prove Theorem \ref{Thm-EP}.  

\begin{proof}[Proof for Theorem \ref{Thm-EP}] \ref{T1}.  
Let $u=u^{(R)}$ be the solution to \eqref{Cut-off problem} as in Lemma \ref{cut-off solution}. Now we remove the cut-off. To this end,  we let 
\begin{equation}
\tau^{(R)}:= \inf\big\{t\ge 0: \|u^{(R)}(t)-u_0\|_{\Wlip}\ge R\big\}.\label{tau-R-M}
\end{equation}
By the continuity of $u^{(R)}(t)$ in $H^{\delta_0}$ and $H^{\delta_0}\hookrightarrow \Wlip$, we have $\p(\tau^{(R)}>0)=1$ for any $R>0$. Since $\chi_R^2\big(\|u^{(R)}(t)-u_0\|_{\Wlip}\big)=1$ for $t\le \tau$, 
$(u^{(R)},\tau^{(R)})$ is a local solution to \eqref{Cauchy problem-Ito} (or equivalently, \eqref{Cauchy problem-Ito-AB}). The uniqueness of $u^{(R)}$ 
implies
$$u^{(R)}(t)= u^{(R+1)}(t),\ \ t\le \tau^{(R)}, \ R\ge 1\ \ \p\text{-a.s.}$$
Let $\tau^* :=\lim_{R\to\infty} \tau^{(R)}$, $\tau^{(0)}:=0$ and we define
\begin{align} 
u(t):=\sum_{R=1}^\infty \textbf{1}_{[\tau^{(R-1)}, \tau^{(R)})}(t)u^{(R)}(t),\ \ t\in [0,\tau^*).\label{tau-R-M-X} 
\end{align}
Then one can conclude that $(u,\tau^*)$ is a local solution to \eqref{Cauchy problem-Ito}. 
Again, by the uniqueness of $u^{(R)}$ and \eqref{limit continuous}, $\p\big(u\in C([0,\tau^*);H^s)\big)=1$.
Moreover,  the construction of $\tau^*$ and \eqref{X L2 bound} immediately tell us $$\limsup_{t\to\tau^*}\|u(t)\|_{\Wlip}=\limsup_{t\to\tau^*}\|u(t)\|_{H^s}=\infty\ \text{on}\ \{\tau^*<\infty\}\ \ \pas,$$ which gives \eqref{blow-up criterion}. 

\medskip

\ref{T2}. Recalling \eqref{GH}, and then
using It\^o's formula to 
$\log({\rm e}+\|u(t)\|_{H^{\eta}}^2)$ with noting \eqref{uux F 1} in Lemma \ref{Te uux+F(u)} and \eqref{GH}, we arrive at
\begin{align}
&\d \log({\rm e}+\|u(t)\|_{H^{\eta}}^2)\notag\\
=\,&\frac{1}{{\rm e}+\|u(t)\|_{H^{\eta}}^2}\Big\{2\bIP{G(u(t))+F(u(t)), u(t)}_{H^{\eta}}+ \|H(t,u(t))\|_{\mathcal L_2(\U;H^\eta)}^2\Big\}\d t\notag\\
&-\frac{2}{\left({\rm e}+\|u(t)\|_{H^{\eta}}^2\right)^2}
\sum_{k=1}^\infty\bIP{H(t,u(t))e_k,u(t)}^2_{H^{\eta}}\d t+\d M_t\notag\\
\leq\, &\frac{1}{{\rm e}+\|u(t)\|_{H^{\eta}}^2}\bigg\{ 2\varLambda \|u(t)\|^2_{H^{\eta}}\|u(t)\|_{W^{1,\infty}}+ 
\sum_{k=1}^\infty\bigg|\IP{(a_k\A_k)^2 u(t), u(t)}_{H^{\eta}} + \left\|a_k\A_k u(t)\right\|_{H^{\eta}}^2\bigg| \notag\\
&\qquad\qquad\qquad\quad+ \sum_{k=1}^\infty\bigg|\IP{(b_k\B_k)^2 u(t), u(t)}_{H^{\eta}} + \left\|b_k\B_k u(t)\right\|_{H^{\eta}}^2\bigg| +
\sum_{k=1}^\infty\|h_k(t,u(t))\|_{H^\eta}^2\bigg\}\d t\notag\\
&-\frac{2}{\left({\rm e}+\|u(t)\|_{H^{\eta}}^2\right)^2}
\sum_{k=1}^\infty\bIP{h_k(t,u(t)),u(t)}^2_{H^{\eta}}\d t+\d M_t,\ \ 
t\in [0,\tau^*),\label{global V estimate}
\end{align}
where $M_t$ is a martingale up to $\tau_{N}$ defined in \eqref{TNN} for any $N\ge 1$. 
According to 
\eqref{NE SPDE} and  \eqref{NE h-k}, one can find a bounded function $Q:[0;\infty)\to(0,\infty)$ such that 
\begin{align*}
\frac{1}{{\rm e}+\|u(t)\|_{H^{\eta}}^2}\bigg\{ &2\varLambda \|u\|^2_{H^{\eta}}\|u\|_{W^{1,\infty}}+ C\|u(t)\|^2_{H^{\eta}} \\
&+
\sum_{k=1}^\infty\|h_k(t,u(t))\|_{H^\eta}^2-\frac{2}{\left({\rm e}+\|u(t)\|_{H^{\eta}}^2\right)}
\sum_{k=1}^\infty\bIP{h_k(t,u(t)),u(t)}^2_{H^{\eta}}\bigg\}\leq Q(t),\ \ t\in [0,\tau^*).
\end{align*}
Using this, Theorems \ref{Cancel-Bk} and \ref{Cancel-Ak} in \eqref{global V estimate} yields
\begin{align*}
&\d \log({\rm e}+\|u(t)\|_{H^{\eta}}^2)\\
\leq\, &\frac{1}{{\rm e}+\|u(t)\|_{H^{\eta}}^2}\bigg\{ 2\varLambda \|u\|^2_{H^{\eta}}\|u\|_{W^{1,\infty}}+ C\|u(t)\|^2_{H^{\eta}} +
\sum_{k=1}^\infty\|h_k(t,u(t))\|_{H^\eta}^2\bigg\}\d t\\
&-\frac{2}{\left({\rm e}+\|u(t)\|_{H^{\eta}}^2\right)^2}
\sum_{k=1}^\infty\bIP{h_k(t,u(t)),u(t)}^2_{H^{\eta}}\d t+\d M_t\\
\leq\,& Q(t)\d t+\d M_t,\ \ 
t\in [0,\tau^*),
\end{align*}
which means that for some function 
$V:[0,\infty)\times[0,\infty)\rightarrow(0,\infty)$ increasing in both variables,
\begin{align*}
\E\Big[\log({\rm e}+\|u(t\wedge\tau_N)\|_{H^{\eta}}^2)\big|\F_0\Big]
\leq V(t,\|u_0\|_{H^s}),\ \ 
t\ge0,\ \ N\ge1.
\end{align*}
Consequently, by the continuity of $u$ in $H^s$ (hence also in $H^{\eta}$), we derive 
\begin{align*} \p\big(\tau^*<t\big|\F_0\big) & \le \p\big(\tau_{N}<t\big|\F_0\big) \le \frac{ \E\Big[\log({\rm e}+\|u(t\wedge\tau_N)\|_{H^{\eta}}^2)\big|\F_0\Big]}{\log({\rm e}+N^2)} \le \frac{V(t,\|u_0\|_{H^s})}{\log({\rm e}+N^2)},\ \ N\ge 1,\ t>0.
\end{align*} 
Letting $N\to\infty$ and then $t\to\infty$ we see that $\p(\tau^*<\infty|\F_0)=0$ and hence, $\p(\tau^*<\infty)=0.$ 
\end{proof}

\section{Noise effect on the dependence on initial data}\label{Non uniform section}

In this section, we consider the problem \eqref{EP non stable Eq} on $\T^d$. For simplicity, we fix a separable Hilbert space $\U$ with the complete orthonormal basis $\{e_k\}_{k\ge1}$. Then we reformulate \eqref{EP non stable Eq} on $\T^d$ as
\begin{equation}\label{SEP non stable BdW}
\left\{\begin{aligned}
{\rm d}u +\left[\left(u\cdot\nabla\right) u+F(u)\right]\d t
=\,&B(t,u)\d\W(t),\ \ t>0,\ \ u\big|_{t=0}=u_0,\ \ x\in \T^d,\\
\W(t):=\, &\sum_{k=1}^\infty W_ke_k,\\
B(t,u)e_k:=\, &h_k(t,u).
\end{aligned}\right.
\end{equation}

We assume that  $h_k(t,\cdot)$ is controlled by $F$ in the following sense:
\begin{Hypothesis}\label{H-hk<F}
For all $k$, 
$h_k:[0,\infty)\times H^s\ni (t,u)\mapsto h_k(t,u)\in H^s$ is continuous for $s>\frac{d}{2}$, and
\begin{equation}\label{assumption G}
\sum_{k=1}^{\infty}\|h_k(t,u)\|^2_{H^s}\leq \|F(u)\|^2_{H^s},\ \ \sum_{k=1}^\infty \|h_k(t,u)- h_k(t, v)\|_{ H^s}^2 \leq \|F(u)-F(v)\|^2_{H^s},
\end{equation}
where $F$ is defined in \eqref{F-EP}.
\end{Hypothesis}

With the above notations at hand, Hypothesis \ref{H-hk<F} is equivalent to
\begin{ManualHypo}{$\bf H_5^\prime$}\label{H-B<F}
$B:(t,u) \mapsto B(t,u)\in \LL_2(\U; H^s)$ is continuous for $s>\frac{d}{2}$ and
\begin{equation}
\|B(t,u)\|_{\LL_2(\U; H^s)}\leq \|F(u)\|_{H^s},\ \ \|B(t,u)-B(t,v)\|_{\LL_2(\U;H^s)}\leq \|F(u)-F(v)\|_{H^s},\ \ s>\frac{d}{2}.
\end{equation}
\end{ManualHypo}

For \eqref{SEP non stable BdW}, we 
have the following 
\begin{Proposition}\label{SEP non stable BdW-T}
Let $s>\frac{d}{2}+1$. Let Hypothesis \ref{H-B<F} $($equivalently, Hypothesis \ref{H-hk<F}$)$ hold. If $u_0$ is an $H^s$-valued $\mathcal{F}_0$-measurable random variable with $\E\|u_0\|^2_{H^s}<\infty$, then there is a unique maximal solution $(u,\tau^*)$ to \eqref{SEP non stable BdW} in the sense of Definitions \ref{pathwise solution definition}, and $(u,\tau^*)$ satisfies \eqref{blow-up criterion}.
\end{Proposition}

\begin{proof}
Since Hypothesis \ref{H-hk<F} implies Hypothesis \ref{H-hk}, 
existence, uniqueness and the blow-up criterion \eqref{blow-up criterion} in $H^{s}$ with $s>\frac{d}{2}+2$ come from Theorem \ref{Thm-EP}. The extension from $s>\frac{d}{2}+3$ to $s>\frac{d}{2}+1$ can be done, as in \cite{Miao-Rohde-Tang-2021-arXiv,GlattHoltz-Vicol-2014-AP}, by mollifying initial data and then passing to the limit, as in the same way. Here we omit the
details to avoid redundancy.
\end{proof}

For the noise effect on the solution map $u_0\mapsto (u,\tau)$, we consider \eqref{EP non stable Eq} and we have

\begin{Theorem}\label{SEP non uniform}
Let $s>d/2+1$ with $d\geq2$. Let Hypothesis \ref{H-B<F} be satisfied. Then there is at least one of the following properties holding true for the problem \eqref{SEP non stable BdW}:
\begin{enumerate}[label={\bf (\roman*)}]

\item For any $R\gg1$, the $R$-exiting time is \textbf{not} strongly stable at the zero solution in the sense of Definition {\rm\ref{Stability on exiting time}}.

\item The solution map $u_0\mapsto u$ defined by \eqref{EP non stable Eq} is \textbf{not} uniformly continuous, as a map from $L^p(\Omega,H^s)$ $(p\in[1,\infty])$ into $L^1\left(\Omega; C\left([0,T];H^s\right)\right)$ for any $T>0$.
More precisely, there exist two sequences of solutions $u_{1,n}(t)$ and $u_{2,n}(t)$, and two sequences of stopping times $\tau_{1,n}$ and $\tau_{2,n}$, such that
\medskip
\begin{enumerate}[label={\bf (\alph*)}]

\item\label{NU depend 1} $\p\{\tau_{i,n}>0\}=1$ for each $n>1$ and $i=1,2$. Besides,
\begin{equation*}
\lim_{n\rightarrow\infty} \tau_{1,n}=\lim_{n\rightarrow\infty} \tau_{2,n}=\infty\ \ \pas
\end{equation*}

\item\label{NU depend 2} For $i=1,2$, $u_{i,n}\in C([0,\tau_{i,n}];H^s)$ $\pas$, and
\begin{equation*}
\Big\|\sup_{t\in[0,\tau_{1,n}]}\|u_{1,n}(t)\|_{H^{s}}\Big\|_{L^p(\Omega)}+
\Big\|\sup_{t\in[0,\tau_{2,n}]}\|u_{2,n}(t)\|_{H^{s}}\Big\|_{L^p(\Omega)}\lesssim 1,\ \ p\in[1,\infty].
\end{equation*}

\item\label{NU depend 3} At  $t=0$,
\begin{align*}
\lim_{n\rightarrow\infty}\|u_{1,n}(0)-u_{2,n}(0)\|_{L^p(\Omega;H^{s})}=0,\ \ p\in[1,\infty].
\end{align*}
\item\label{NU depend 4} For any $T>0$, we have
\begin{align*}
\liminf_{n\rightarrow\infty}\E
\sup_{t\in[0,T\wedge\tau_{1,n}\wedge\tau_{2,n}]}\|u_{1,n}(t)-u_{2,n}(t)\|_{H^{s}} \gtrsim \sup_{t\in[0,T]}|\sin 2\pi t|.
\end{align*}
\end{enumerate}

\end{enumerate}

\end{Theorem}

\begin{Remark} We give the following remarks concerning Theorem \ref{SEP non uniform}.

{\bf (1)} It is worthwhile noting that in deterministic cases, the issue of the optimal dependence of solutions (for example, the solution map is continuous but not uniformly continuous) to various nonlinear dispersive and integrable equations has been the subject of many papers. One of the first results of this type dates back at least as far as 
to Kato \cite{Kato-1975-ARMA}, where Kato proved that the solution map $H^s(\T)\ni u_{0}\mapsto u$ ($s>3/2$) given by the inviscid Burgers equation is not H\"{o}lder continuous regardless of the H\"{o}lder exponent.
Since then different techniques have been successfully applied to various problems. 
Particularly, for the incompressible Euler equation, we refer to \cite{Himonas-Misiolek-2010-CMP,Tang-Liu-2014-JMP}, and for CH type equations, we refer to \cite{Himonas-Kenig-2009-DIE,Himonas-Kenig-Misiolek-2010-CPDE,Tang-Liu-2015-ZAMP,Tang-Shi-Liu-2015-MM,Tang-Zhao-Liu-2014-AA} and the references therein. 

{\bf (2)} To prove Theorem \ref{SEP non uniform}, we assume that for some $R_0\gg1$, the $R_0$-exiting time of the zero solution is strongly stable. Then we will construct an example to show that the solution map $u_0\mapsto u$ defined by \eqref{EP non stable Eq} is not uniformly continuous. This example involves the construction (for each $s>d/2+1$) of two sequences of solutions which are converging at time zero but remain far apart at any later time. Actually, we will first construct two sequences of approximation solutions $u^{l,n}(l\in\{-1,1\})$ such that the actual solutions $u_{l,n}(l\in\{-1,1\})$ starting from $u_{l,n}(0)=u^{l,n}(0)$ satisfy that as $n\rightarrow\infty$,
\begin{align}
\lim_{n\rightarrow\infty}\E\sup_{[0,\tau_{l,n}]}\|u_{l,n}-u^{l,n}\|_{H^s}^2
=0,\label{non uniform remark equ}
\end{align}
where $u_{l,n}$ exists at least on $[0,\tau_{l,n}]$. Due to the lack of life span estimate in stochastic setting, in order to obtain \eqref{non uniform remark equ}, we first connect the property $\inf_{n}\tau_{l,n}>0$ with the stability property of the exiting time of the zero solution. In deterministic case, we have uniform lower bounds for the existence times of a sequence of solutions (see (4.7)--(4.8) in \cite{Tang-Shi-Liu-2015-MM} and (3.8)--(3.9) in \cite{Tang-Zhao-Liu-2014-AA} for example). If \eqref{non uniform remark equ} holds true, then we can estimate the approximation solutions instead of the actual solutions and obtain \ref{NU depend 4} by showing that the error in $H^{2s-\sigma}$ behaves like $n^{s-\sigma}$, but the error in $H^{\sigma}$ is $O(1/n^{r_s})$, where $d/2<\sigma<s-1$ and $-r_s+s-\sigma<0$. These two estimates and interpolation give \eqref{non uniform remark equ}. Theorem \ref{SEP non uniform} is proved for $d\geq 2$. However, the proof holds true also for $d=1$, namely the stochastic CH equation case (see Remark \ref{SEP non uniform remark}). 
%

{\bf (3)} Theorem \ref{SEP non uniform} implies that for the issue of the dependence on initial data, we cannot expect  the multiplicative noise (in It\^{o} sense) to improve the stability of the exiting time of the zero solution, and simultaneously improve the continuity of the dependence on initial data. Formally speaking, the ``regularization by (It\^{o} sense) noise'' actually preserves the hyperbolic structure of the equations.
As for the noise in the sense of Stratonovich, whether it can improve the dependence on initial data is our future work.

\end{Remark}

Now we proceed to prove Theorem \ref{SEP non uniform}. We assume that for some $R_0\gg1$, the $R_0$-exiting time is strongly stable at the zero solution. Then we will show that the solution map $u_0\mapsto u$ defined by \eqref{EP non stable Eq} is not uniformly continuous. We will firstly assume that the dimension $d\geq 2$ is even.

\subsection{Estimates on the errors}

Let $l\in\{-1,1\}$. Define divergence-free vector field as
\begin{equation}\label{approximate solutions definition 1}
u^{l, n}=(l n^{-1}+n^{-s}\cos\theta_{1},\, l n^{-1}+n^{-s}\cos\theta_{2},\,\cdots,\, l n^{-1}+n^{-s}\cos\theta_{d}),
\end{equation}
where $\theta_{i}=2\pi (nx_{d+1-i}-l t)$ with $1\leq i\leq d$ and $n\ge1$.
Substituting $u^{l,n}$ into \eqref{EP non stable Eq}, we see that the error $\mathcal{E}^{l,n}(t)$ can be defined as
\begin{align}\label{error definition}
\mathcal{E}^{l,n}(t)=\,&u^{l, n}(t)-u^{l, n}(0)
+\int_{0}^{t}\left[(u^{l,n}\cdot\nabla) u^{l,n}+F(u^{l,n})\right]{\rm d}t'-\int_{0}^{t}B(t',u^{l,n}){\rm d}\W.
\end{align}
Now we analyze the error as follows.

\begin{Lemma}\label{error estimate lemma}
Let $d\geq 2$ be even and $s>1+\frac{d}{2}\geq2$. For $\sigma\in\left(\frac{d}{2},\min\left\{s-1,\frac{d}{2}+1\right\}\right)$, we have that for any $T>0$ and $n\gg 1$,
\begin{align}
\E\sup_{t\in[0,T]}\|\mathcal{E}^{l,n}(t)\|^2_{H^\sigma}\leq
Cn^{-2r_s},\ \ C=C(T),\label{error estimate d even}
\end{align}
where
\begin{equation*}
r_{s}=\begin{cases}
2s-\sigma-1 \ \ \ & {\rm if} \ 1+\frac{d}{2}< s\leq 3,\\
s-\sigma+2 \ \ \ & {\rm if} \ s>3.
\end{cases}
\end{equation*}
\end{Lemma}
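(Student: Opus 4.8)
The plan is to exploit the explicit, divergence--free structure of $u^{l,n}$ in \eqref{approximate solutions definition 1} and to reduce the estimate of $E^{l,n}$ to two ingredients: a pointwise cancellation in the transport term, and a \emph{sharp} $H^\sigma$ bound for $F(u^{l,n})$ that beats the generic product estimate of Lemma \ref{F-EP Lemma}. Rewriting \eqref{error definition} via $u^{l,n}(t)-u^{l,n}(0)=\int_0^t\partial_{t'}u^{l,n}\,{\rm d}t'$, I would split
$$E^{l,n}(t)=\int_0^t\left[\partial_{t'}u^{l,n}+(u^{l,n}\cdot\nabla)u^{l,n}+F(u^{l,n})\right]{\rm d}t'-\int_0^tQ(t',u^{l,n})\,{\rm d}\W,$$
and treat the deterministic drift and the stochastic integral separately, noting that since $\theta_i=nx_{d+1-i}-lt$ enters only as a spatial phase shift, every $H^\sigma$--norm below is independent of $t$.

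First I would carry out the transport cancellation. Since $d$ is even, $k=d+1-k$ has no solution, so $\mathrm{div}\,u^{l,n}=0$, and a direct computation gives $\partial_{t}u^{l,n}_j=l\,n^{-s}\sin\theta_j$ while $[(u^{l,n}\cdot\nabla)u^{l,n}]_j=-l\,n^{-s}\sin\theta_j-n^{1-2s}\cos\theta_{d+1-j}\sin\theta_j$. The $O(n^{-s})$ terms cancel, leaving a residual of amplitude $O(n^{1-2s})$ oscillating at frequency $\sim n$; by Lemma \ref{cos sin approximate estimate} its $H^\sigma$--norm is $\lesssim n^{\sigma+1-2s}$.

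The heart of the argument is the sharp bound on $F(u^{l,n})$, for which the crude estimate $\|F(v)\|_{H^\sigma}\lesssim\|v\|_{W^{1,\infty}}\|v\|_{H^\sigma}$ is \emph{not} strong enough (it loses one power of $n$). Instead I would use the smoothing structure $F=(I-\Delta)^{-1}\mathrm{div}\,F_1+(I-\Delta)^{-1}F_2$ from \eqref{F-EP}. Because $\nabla u^{l,n}$ has no zero--frequency part, each entry of $F_1(u^{l,n})$ is a product of two first derivatives, hence a trigonometric polynomial of amplitude $O(n^{2-2s})$ with frequencies in $\{0,n,2n\}$; applying $\mathrm{div}$ (a factor $\sim n$ on nonzero modes) and then $(I-\Delta)^{-1}$ (a factor $\sim n^{-2}$) annihilates the constant part and yields $\|(I-\Delta)^{-1}\mathrm{div}\,F_1(u^{l,n})\|_{H^\sigma}\lesssim n^{2-2s}\cdot n^{-1}\cdot n^{\sigma}=n^{\sigma+1-2s}$. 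For $F_2(u^{l,n})=u^{l,n}\cdot\nabla (u^{l,n})^T$ (the divergence--free reduction of \eqref{F-EP}), the leading contribution comes from the constant drift $l\,n^{-1}$ multiplying a first derivative $O(n^{1-s})$, producing an amplitude--$O(n^{-s})$ term at frequency $\sim n$; here only $(I-\Delta)^{-1}$ acts, giving a factor $\sim n^{-2}$, so $\|(I-\Delta)^{-1}F_2(u^{l,n})\|_{H^\sigma}\lesssim n^{-s}\cdot n^{-2}\cdot n^{\sigma}=n^{\sigma-s-2}$. Combining, $\|F(u^{l,n})\|_{H^\sigma}\lesssim n^{\sigma+1-2s}+n^{\sigma-s-2}$, and the same bound controls the transport residual.

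Finally, for the stochastic term I would apply the BDG inequality together with Hypothesis \ref{H-3} (admissible since $\sigma>d/2$):
$$\E\sup_{t\in[0,T]}\left\|\int_0^tQ(t',u^{l,n})\,{\rm d}\W\right\|^2_{H^\sigma}\le C\int_0^T\|Q(t',u^{l,n})\|^2_{\LL_2(U;H^\sigma)}\,{\rm d}t'\le C\int_0^T\|F(u^{l,n})\|^2_{H^\sigma}\,{\rm d}t',$$
while the drift is bounded deterministically by $T^2\sup_t\|\partial_tu^{l,n}+(u^{l,n}\cdot\nabla)u^{l,n}+F(u^{l,n})\|^2_{H^\sigma}$. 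Both are therefore $\lesssim(n^{\sigma+1-2s}+n^{\sigma-s-2})^2$, and since $2s-\sigma-1\le s-\sigma+2\iff s\le 3$, the dominant power is $n^{-(2s-\sigma-1)}$ when $1+\tfrac d2<s\le3$ and $n^{-(s-\sigma+2)}$ when $s>3$, which is exactly \eqref{error estimate d even} with the stated $r_s$. I expect the main obstacle to be precisely this sharp $H^\sigma$ estimate of $F(u^{l,n})$: one must resolve the frequency content of $F_1,F_2$ and track the two distinct gains of the multipliers $(I-\Delta)^{-1}\mathrm{div}$ versus $(I-\Delta)^{-1}$, since it is this dichotomy — and not any generic norm bound — that generates the case split in $r_s$.
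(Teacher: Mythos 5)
Your proposal is correct and follows essentially the same route as the paper: the same cancellation of the $O(n^{-s})$ terms between $\partial_t u^{l,n}$ and $(u^{l,n}\cdot\nabla)u^{l,n}$, and the same sharp bound $\|F(u^{l,n})\|_{H^\sigma}\lesssim n^{-(2s-\sigma-1)}+n^{-(s-\sigma+2)}$ obtained from the two-derivative gain of $(I-\Delta)^{-1}$ — the paper just carries out your frequency bookkeeping fully explicitly, computing $F_1(u^{l,n})$, ${\rm div}F_1(u^{l,n})$ and $F_2(u^{l,n})$ in closed form (including the fact that ${\rm div}$ kills the constant part of $F_1$) and then invoking Lemma \ref{cos sin approximate estimate}, which produces exactly your exponents and the case split $2s-\sigma-1\le s-\sigma+2\iff s\le 3$. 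The only deviation is the final assembly: the paper applies the It\^{o} formula to $\|E^{l,n}\|^2_{H^\sigma}$, estimates the martingale term via BDG together with \eqref{assumption G}, and closes with Gr\"{o}nwall, whereas you bound the Bochner integral deterministically by $T^2\sup_t\|\cdot\|^2_{H^\sigma}$ and the stochastic integral directly by BDG with $p=2$ and \eqref{assumption G}; since $u^{l,n}$ is deterministic this is perfectly valid and in fact slightly more economical, giving the same $C(T)n^{-2r_s}$.
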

\begin{proof}
Direct computation shows that
\begin{equation*}
(u^{l, n}\cdot\nabla) u^{l, n}
=\left(
-2\pi l n^{-s}\sin \theta_{i}-2\pi n^{-2s+1}\sin \theta_{i}\cos \theta_{d+1-i}
\right)_{1\leq i\leq d},
\end{equation*}
which means that
\begin{align*}
u^{l, n}(t)-u^{l, n}(0)+&\int_{0}^{t}(u^{l,n}\cdot\nabla) u^{l,n}{\rm d}t'
=2\pi\int_{0}^{t}\left(-n^{-2s+1}\sin\theta_i\cos\theta_{d+1-i}\right)_{1\leq i\leq d}{\rm d}t'.
\end{align*}
Then we have
\begin{equation}
\mathcal{E}^{l,n}(t)+
\int_{0}^{t}\left[\left({2\pi}n^{-2s+1}\sin\theta_i\cos\theta_{d+1-i}\right)_{1\leq i\leq d}-F(u^{l,n})\right]{\rm d}t'
+\int_{0}^{t}B(t,u^{l,n}){\rm d}\W=0.\label{Error equation}
\end{equation}
We note that by Lemma \ref{cos sin approximate estimate},
\begin{align}
\Big\|\left(-2\pi n^{-2s+1}\sin\theta_i\cos\theta_{d+1-i}\right)_{1\leq i\leq d}\Big\|_{H^{\sigma}}
\leq C\sum^{d}_{i=1}\left\|n^{-2s+1}\sin\theta_i\cos\theta_{d+1-i}\right\|_{H^{\sigma}}
\lesssim\, & n^{-2s+1+\sigma}\lesssim n^{-r_s}.\label{error 1}
\end{align}
For $F(\cdot)=(\I-\Delta)^{-1} {\rm div}F_{1}(u)+(\I-\Delta)^{-1}F_{2}(u)$ given by \eqref{F-EP}, some calculations reveal that $F_{1}(u^{l,n})$ is a diagonal matrix such that
\begin{align*}
F_{1}(u^{l,n})=\,&{4\pi^{2}}n^{-2s+2}\times{\rm diag} (\kappa_{1},\,\cdots,\,\kappa_d), \nonumber\\
\kappa_{i}:=\,&\sin \theta_{i}(\sin \theta_{i}+\sin\theta_{d+1-i})
-\sin^{2}\theta_{d+1-i}+{\frac{1}{2}\left(\sin^{2}\theta_{1}+\cdot\cdot\cdot+\sin^{2}\theta_{d}\right)},\ \ 1\le i\le d.
\end{align*}
Therefore
\begin{align*}
{\rm div}F_{1}(u^{l, n})={8\pi^{3}}n^{-2s+3}
\big(\sin\theta_{i}\cos\theta_{d+1-i}
-\sin\theta_{d+1-i}\cos\theta_{d+1-i}\big)_{1\leq i\leq d}.
\end{align*}
Similarly, since ${\rm div}u^{l,n}=0$, we have
\begin{align*}
F_{2}(u^{l, n})
=\,&\left(-{2\pi}l n^{-s}\sin\theta_{d+1-i}-{2\pi}n^{-2s+1}\sin\theta_{d+1-i}\cos\theta_{d+1-i}\right)_{1\leq i\leq d}.
\end{align*}
Therefore
\begin{align*}
F(u^{l,n})=\left((\I-\Delta)^{-1}\Gamma_i\right)_{1\leq i\leq d},
\end{align*}
where $$\Gamma_i= 
\left({8\pi^{3}}n^{-2s+3}\sin\theta_{i}\cos\theta_{d+1-i}-
(\pi n^{-2s+1}+{4\pi^{3}}n^{-2s+3})\sin2\theta_{d+1-i}
-{2\pi}l n^{-s}\sin \theta_{d+1-i}\right).$$
Since $(\I-\Delta)^{-1}$ is bounded from $H^\sigma$ to $H^{\sigma+2}$, we can use Lemma \ref{cos sin approximate estimate} to derive that
\begin{align}
\big\|F(u^{l,n})\big\|_{H^\sigma}
\leq\,& C\sum^{d}_{i=1}\left(\left\|n^{-2s+3}\sin\theta_i\cos\theta_{d+1-i}\right\|_{H^{\sigma-2}}
+\left\|{n^{-2s+3}}\sin2\theta_{d+1-i}\right\|_{H^{\sigma-2}}\right)\nonumber\\
&+C\sum^{d}_{i=1}\left(\left\|{n^{-2s+1}}\sin2\theta_{d+1-i}\right\|_{H^{\sigma-2}}
+\|n^{-s}\sin \theta_{d+1-i}\|_{H^{\sigma-2}}\right)\nonumber\\
\lesssim\, & n^{-2s+3+\sigma-2}+n^{-2s+1+\sigma-2}+n^{-s+\sigma-2}\lesssim n^{-r_s}.\label{error 2}
\end{align}
Then we can use the It\^{o} formula to \eqref{Error equation} to find that for any $T>0$ and $t\in[0,T]$,
\begin{align*}
\E\sup_{t\in[0,T]}\|\mathcal{E}^{l,n}(t)\|^2_{H^\sigma}\leq\, &
\E\sup_{t\in[0,T]}\left|-2\int_0^{t}\IP{B(t',u^{l,n}){\rm d}\W,\mathcal{E}^{l,n}(t')}_{H^\sigma}\right|
+\sum_{i=2}^4\int_0^{T}\E|P_i|{\rm d}t,\label{error estimate}
\end{align*}
where
\begin{align*}
P_2&=-2 \IP{D^\sigma \left(n^{-2s+1}\sin\theta_i\cos\theta_{d+1-i}\right)_{1\leq i\leq d},
D^\sigma \mathcal{E}^{l,n} }_{L^2},\\
P_3&=2\bIP{D^\sigma F(u^{l,n}),D^\sigma \mathcal{E}^{l,n}}_{L^2},\
P_4=\|B(t,u^{l,n})\|_{\LL_2(\U;H^\sigma)}^2.
\end{align*}
Using \eqref{assumption G} and the BDG inequality, we find that
\begin{align}
\E\sup_{t\in[0,T]}\left|\int_0^{t}\IP{-2B(t',u^{l,n}){\rm d}\W,\mathcal{E}^{l,n}}_{H^\sigma}\right|
\leq\, & \frac12\E\sup_{t\in[0,T]}\|\mathcal{E}^{l,n}(t)\|_{H^\sigma}^2
+CTn^{-2r_s}.
\end{align}
We use \eqref{error 1} and \eqref{error 2} to find that,
\begin{align*}
\int_0^{T}\E|P_2|{\rm d}t
\leq\,& C\int_0^{T}\E\left\|(-n^{-2s+1}\sin\theta_i\cos\theta_{d+1-i})_{1\leq i\leq d}\right\|^2_{H^{\sigma}}{\rm d}t
+C\int_0^{T}\E\|\mathcal{E}^{l,n}(t)\|_{H^\sigma}^2{\rm d}t\notag\\
\leq\,& CTn^{-2r_s}+C\int_0^{T}\E\|\mathcal{E}^{l,n}(t)\|^2_{H^\sigma}{\rm d}t,
\end{align*}
\begin{align*}
\int_0^{T}\E|P_3|{\rm d}t
\leq\,& C\int_0^{T}\E\left(\|F(u^{l,n})\|_{H^{\sigma}}
\|\mathcal{E}^{l,n}(t)\|_{H^\sigma}\right){\rm d}t
\leq\, CT n^{-2r_s}
+C\int_0^{T}\E\|\mathcal{E}^{l,n}(t)\|^2_{H^\sigma}{\rm d}t,
\end{align*}
and
\begin{align*}
\int_0^{T}\E|P_4|{\rm d}t
\leq\,& C\int_0^{T}\E\|F(u^{l,n})\|^2_{H^{\sigma}}{\rm d}t\leq CT n^{-2r_s}.
\end{align*}
Collecting the above estimates into \eqref{error estimate}, we arrive at
\begin{align*}
\E\sup_{t\in[0,T]}\|\mathcal{E}^{l,n}(t)\|^2_{H^\sigma}\leq
CT n^{-2r_s}
+C\int_0^{T}\E\sup_{t'\in[0,t]}\|\mathcal{E}^{l,n}(t')\|^2_{H^\sigma}{\rm d}t.
\end{align*}
Then it follows from the Gr\"{o}nwall inequality that
\begin{align*}
\E\sup_{t\in[0,T]}\|\mathcal{E}^{l,n}(t)\|^2_{H^\sigma}\leq
Cn^{-2r_s},\ \ C=C(T),
\end{align*}
which is the desired result.
\end{proof}

\subsection{Construction of actual solutions}

Now we consider the problem \eqref{EP non stable Eq} with deterministic initial data $u^{l,n}(0,x)$, i.e.,
\begin{equation}\label{appro Cauchy problem}
\left\{\begin{aligned}
&{\rm d}u+\left[\left(u\cdot\nabla\right) u+F(u)\right]{\rm d}t
=B(t,u){\rm d}\W,\qquad t>0,\ x\in \T^d,\\
&u(0,x)=u^{l,n}(0,x), \qquad x\in \T^d,
\end{aligned} \right.
\end{equation}
where
\begin{align*}
u^{l,n}(0,x)
=\big(l n^{-1}+n^{-s}\cos n(2\pi x_{d+1-i})\big)_{1\leq i\leq d}.
\end{align*}
Then Proposition \ref{SEP non stable BdW-T} means that for each $n$, \eqref{appro Cauchy problem} has a unique maximal solution $(u_{l,n},\tau^*_{l,n})$.

\subsection{Estimates on the error}

\begin{Lemma}\label{v eta n estimate lemma}
Let $d\geq 2$ be even, $s>1+\frac{d}{2}$,
$\sigma\in\left(\frac{d}{2},\min\left\{s-1,\frac{d}{2}+1\right\}\right)$ and $r_{s}>0$ be given in Lemma \ref{error estimate lemma}. For $R\gg1$, we define
\begin{align}
\tau^R_{l,n}:=\inf\left\{t\geq0:\|u_{l,n}\|_{H^s}> R\right\},\ \ l\in\{-1,1\}.
\label{u eta n stopping time}
\end{align}
Then for any $T>0$ and $n\gg 1$, we have that for $l\in\{-1,1\}$,
\begin{align}
\E\sup_{t\in[0,T\wedge\tau^R_{l,n}]}\|u^{l,n}-u_{l,n}\|^2_{H^{\sigma}}\leq Cn^{-2r_s},\ \ C=C(R,T),\label{v sigma norm}
\end{align}
and
\begin{align}
\E\sup_{t\in[0,T\wedge\tau^R_{l,n}]}\|u^{l,n}-u_{l,n}\|^2_{H^{2s-\sigma}}\leq Cn^{2s-2\sigma},\ \ C=C(R,T).\label{v 2s-sigma norm}
\end{align}
\end{Lemma}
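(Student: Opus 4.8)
The plan is to study the difference $v^{l,n}:=u^{l,n}-u_{l,n}$ between the approximate profile \eqref{approximate solutions definition 1} and the genuine solution $(u_{l,n},\tau^*_{l,n})$ of \eqref{appro periodic Cauchy problem}, which share the same initial datum, so that $v^{l,n}(0)=0$. Subtracting the equation for $u_{l,n}$ from the defining relation \eqref{Error equation} for the residual $E^{l,n}$, and splitting the transport term as $(u^{l,n}\cdot\nabla)u^{l,n}-(u_{l,n}\cdot\nabla)u_{l,n}=(v^{l,n}\cdot\nabla)u^{l,n}+(u_{l,n}\cdot\nabla)v^{l,n}$, I obtain that $v^{l,n}$ solves a transport-type SPDE that is linear in $v^{l,n}$ and forced by the residual $E^{l,n}$. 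The two bounds \eqref{v sigma norm} and \eqref{v 2s-sigma norm} will be proved by two different energy estimates carried out up to the stopping time $\tau^R_{l,n}$ from \eqref{u eta n stopping time}, on which $\|u_{l,n}\|_{H^s}\le R$ and hence $\|u_{l,n}\|_{W^{1,\infty}}\lesssim R$.

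For the low-regularity bound \eqref{v sigma norm} I would apply the It\^{o} formula to $\|v^{l,n}\|^2_{H^\sigma}$, justified as in the proof of Lemma \ref{Blow-up criterion lemma} by first mollifying with $T_\e$ and passing to the limit, since $(u_{l,n}\cdot\nabla)v^{l,n}$ loses one derivative. The top-order contributions are treated exactly as in Lemma \ref{local time Lipschitz}: the commutator $[D^\sigma,u_{l,n}\cdot\nabla]v^{l,n}$ together with integration by parts (Lemma \ref{Kato-Ponce commutator estimate}) gives a term $\lesssim\|u_{l,n}\|_{H^s}\|v^{l,n}\|^2_{H^\sigma}\le R\|v^{l,n}\|^2_{H^\sigma}$, while $(v^{l,n}\cdot\nabla)u^{l,n}$ is controlled by the algebra property of $H^\sigma$ together with $\|u^{l,n}\|_{H^{\sigma+1}}\lesssim 1$ (Lemma \ref{cos sin approximate estimate}, using $\sigma+1<s$). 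The decisive point is that every genuinely inhomogeneous forcing term is of size $n^{-r_s}$: by \eqref{error 1} and \eqref{error 2} one has $\|(n^{-2s+1}\sin\theta_i\cos\theta_{d+1-i})_{1\le i\le d}\|_{H^\sigma}+\|F(u^{l,n})\|_{H^\sigma}\lesssim n^{-r_s}$, and by Hypothesis \ref{H-3}, \eqref{assumption G} and Lemma \ref{F-EP Lemma} the noise coefficient obeys $\|Q(t,u_{l,n})\|_{\LL_2(U;H^\sigma)}\le\|F(u_{l,n})\|_{H^\sigma}\lesssim n^{-r_s}+R\|v^{l,n}\|_{H^\sigma}$. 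Using the BDG inequality on the stochastic term (absorbing $\tfrac12\E\sup\|v^{l,n}\|^2_{H^\sigma}$) and then Gronwall's inequality, the $R\|v^{l,n}\|^2_{H^\sigma}$ contributions are absorbed and the residual forcing yields $\E\sup_{[0,T\wedge\tau^R_{l,n}]}\|v^{l,n}\|^2_{H^\sigma}\lesssim C(R,T)n^{-2r_s}$, which is \eqref{v sigma norm}.

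For the high-regularity bound \eqref{v 2s-sigma norm} smallness is no longer available, so I would instead estimate $u^{l,n}$ and $u_{l,n}$ separately through the triangle inequality. On the one hand, Lemma \ref{cos sin approximate estimate} gives $\|u^{l,n}\|_{H^{2s-\sigma}}\lesssim n^{-s}n^{2s-\sigma}=n^{s-\sigma}$. On the other hand, since the initial datum $u^{l,n}(0)$ is a trigonometric polynomial (hence in $H^{2s-\sigma}$, with $2s-\sigma>s>d/2+1$), the persistence of regularity underlying Theorem \ref{Blow-up iff condition} shows $u_{l,n}\in C([0,\tau^R_{l,n}];H^{2s-\sigma})$ almost surely; running the usual $H^{2s-\sigma}$ energy estimate for $u_{l,n}$ (again mollifying, and using Lemma \ref{Kato-Ponce commutator estimate}, Lemma \ref{F-EP Lemma}, Hypothesis \ref{H-1} for $Q$, and $\|u_{l,n}\|_{W^{1,\infty}}\le R$ on $[0,\tau^R_{l,n}]$), followed by BDG and Gronwall, yields $\E\sup_{[0,T\wedge\tau^R_{l,n}]}\|u_{l,n}\|^2_{H^{2s-\sigma}}\lesssim C(R,T)\,\E\|u^{l,n}(0)\|^2_{H^{2s-\sigma}}\lesssim C(R,T)n^{2s-2\sigma}$. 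Combining the two estimates gives \eqref{v 2s-sigma norm}.

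I expect the main obstacle to be the correct bookkeeping of the stochastic term in the $H^\sigma$ estimate. Although $u^{l,n}$ is deterministic, its residual $E^{l,n}$ carries the stochastic integral $-\int_0^tQ(t',u^{l,n})\,{\rm d}\W$, so the martingale part of $v^{l,n}$ reorganizes into $-\int_0^tQ(t',u_{l,n})\,{\rm d}\W$; the point is to recognize that its quadratic variation is nonetheless of the good order $n^{-2r_s}$ (modulo an absorbable $R\|v^{l,n}\|^2_{H^\sigma}$), precisely because $F$ gains two derivatives and thereby damps the $O(n)$ frequencies of $u^{l,n}$, exactly as recorded in \eqref{error 2}. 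Securing this cancellation, together with the derivative-loss justification via $T_\e$, is the crux; the remaining steps are routine variants of Lemmas \ref{local time Lipschitz} and \ref{error estimate lemma}.
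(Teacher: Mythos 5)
Your proposal is correct and follows essentially the same route as the paper: derive the linear transport-type equation for $v^{l,n}=u^{l,n}-u_{l,n}$ forced by the residual, run an $H^\sigma$ energy estimate up to $\tau^R_{l,n}$ in which the noise term $Q(t,u_{l,n})$ is controlled via \eqref{assumption G} and Lemma \ref{F-EP Lemma} by $n^{-r_s}+C_R\|v^{l,n}\|_{H^\sigma}$ and closed by BDG plus Gr\"onwall, and obtain the $H^{2s-\sigma}$ bound by the triangle inequality together with a high-norm energy estimate for $u_{l,n}$ seeded by $\|u^{l,n}(0)\|_{H^{2s-\sigma}}\lesssim n^{s-\sigma}$. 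The only differences are cosmetic: you split the transport difference as $(v\cdot\nabla)u^{l,n}+(u_{l,n}\cdot\nabla)v$ rather than the paper's $(u^{l,n}\cdot\nabla)v+(v\cdot\nabla)u_{l,n}$ (which merely swaps where the commutator versus algebra estimates are applied), and your $T_\e$ mollification in the $H^\sigma$ step is unnecessary (since $\sigma\le s-1$ the drift is already $H^\sigma$-valued) though harmless.
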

\begin{proof}
We first note that by Lemma \ref{cos sin approximate estimate}, for $l\in\{1,-1\}$,
\begin{equation}\label{appro solution bounded}
\|u^{l,n}(t)\|_{H^s}\lesssim 1,\ \ t\ge0,\ n\ge1,
\end{equation}
which means $\p\{\tau^R_{l,n}>0\}=1$ for any $n\ge1$ and $l\in\{-1,1\}$.
Let $v=v^{l,n}=u^{l,n}-u_{l,n}$. In view of \eqref{error definition}, \eqref{Error equation} and \eqref{appro Cauchy problem}, we see that $v$ satisfies
\begin{align*}
v(t)+&\int_{0}^{t}\left[(u^{l,n}\cdot\nabla)v+(v\cdot\nabla)u_{l,n}
+\left(-F(u_{l,n})\right)\right]{\rm d}t'\\
&=\int_{0}^{t}\left[-B(t',u_{l,n})\right]{\rm d}\W
-{2\pi}\int_{0}^{t}\left[\left(n^{-2s+1}\sin\theta_i\cos\theta_{d+1-i}\right)_{1\leq i\leq d}\right]{\rm d}t'.
\end{align*}
For any $T>0$, we use the It\^{o} formula on $[0,T\wedge\tau^R_{l,n}]$, take a supremum over $t\in[0,T\wedge\tau^R_{l,n}]$ and use the BDG inequality to find
\begin{align*}
\E\sup_{t\in[0,T\wedge\tau^R_{l,n}]}\|v\|^2_{H^{\sigma}}
\leq\,&
2\E\sup_{t\in[0,T\wedge\tau^R_{l,n}]}\left|\int_0^{t}\IP{-B(t',u_{l,n}){\rm d}\W,v}_{H^{\sigma}}\right|
+\sum_{i=2}^6\E\int_0^{T\wedge\tau^R_{l,n}}|N_i|{\rm d}t,
\end{align*}
where
\begin{align*}
N_2&=2\IP{D^\sigma \left(-{2\pi}n^{-2s+1}\sin\theta_i\cos\theta_{d+1-i}\right)_{1\leq i\leq d},
	D^\sigma v}_{L^2},\\
N_3&=-2 \bIP{D^\sigma [(v\cdot\nabla)u_{l,n}],D^\sigma v}_{L^2},\ \
N_4=-2 \bIP{D^\sigma [(u^{l,n}\cdot\nabla)v],D^\sigma v }_{L^2},\\
N_5&=2 \bIP{D^\sigma F(u_{l,n}),D^\sigma v }_{L^2},\ \
N_6=\|B(t,u_{l,n})\|_{\LL_2(\U;H^\sigma)}^2.
\end{align*}
We can first infer from Lemma \ref{F-EP Lemma} that
\begin{align*}
\|F(u_{l,n})\|_{H^\sigma}^2\lesssim\, &
\left(\|F(u^{l,n})-F(u_{l,n})\|_{H^\sigma}
+\|F(u^{l,n})\|_{H^\sigma}\right)^2
\lesssim\,  (\|u^{l,n}\|_{H^{s}}+\|u_{l,n}\|_{H^{s}})^2
\|v\|^2_{H^{\sigma}}+\|F(u^{l,n})\|^2_{H^\sigma}.
\end{align*}
From the above estimate, \eqref{assumption G}, the BDG inequality, \eqref{error 2}, \eqref{u eta n stopping time} and \eqref{appro solution bounded}, we have
\begin{align*}
&\E\sup_{t\in[0,T\wedge\tau^R_{l,n}]}\left|\int_0^{t}\IP{-2B(t',u_{l,n}){\rm d}\W,v}_{H^s}\right|\notag\\
\leq\, & 2\E
\left(\int_0^{T\wedge\tau^R_{l,n}}\|v\|_{H^\sigma}^2
\|F(u_{l,n})\|_{H^\sigma}^2{\rm d}t\right)^{\frac12}\notag\\
\leq\, & C\E \left(\sup_{t\in[0,T\wedge\tau^R_{l,n}]}\|v\|_{H^{\sigma}}^2
\int_0^{T\wedge\tau^R_{l,n}} (\|u^{l,n}\|_{H^{s}}+\|u_{l,n}\|_{H^{s}})^2
\|v\|^2_{H^{\sigma}}{\rm d}t\right)^{\frac12}\notag\\
&+C\E \left(\sup_{t\in[0,T\wedge\tau^R_{l,n}]}\|v\|_{H^{\sigma}}^2
\int_0^{T\wedge\tau^R_{l,n}}
\|F(u^{l,n})\|^2_{H^\sigma}{\rm d}t\right)^{\frac12}\notag\\
\leq\, & \frac12\E\sup_{t\in[0,T\wedge\tau^R_{l,n}]}\|v\|_{H^{\sigma}}^2
+C_R\E\int_0^{T}\sup_{t'\in[0,t\wedge\tau^R_{l,n}]}\|v(t')\|^2_{H^\sigma}{\rm d}t
+CT n^{-2r_s}.
\end{align*}
Applying Lemma \ref{F-EP Lemma}, $H^{\sigma}\hookrightarrow L^{\infty}$, integration by parts and \eqref{error 1}, we have
\begin{align*}
|N_2|\lesssim \left\|(n^{-2s+1}\sin\theta_i\cos\theta_{d+1-i})_{1\leq i\leq d}\right\|^2_{H^{\sigma}}+\|v\|^2_{H^{\sigma}}\lesssim n^{-2r_s}+\|v\|^2_{H^{\sigma}},
\end{align*}
\begin{align*}
|N_3|\lesssim\, &\|(v\cdot\nabla)u_{l,n}\|_{H^{\sigma}}\|v\|_{H^{\sigma}}
\lesssim \|v\|^2_{H^{\sigma}}\|u_{l,n}\|_{H^s},
\end{align*}
\begin{align*}
|N_5|\lesssim(\|u^{l,n}\|_{H^{s}}+\|u_{l,n}\|_{H^{s}})
\|v\|^2_{H^{\sigma}}
+\|F(u^{l,n})\|^2_{H^\sigma}+\|v\|^2_{H^\sigma},
\end{align*}
and
\begin{align*}
|N_6|\lesssim(\|u^{l,n}\|_{H^{s}}+\|u_{l,n}\|_{H^{s}})^2
\|v\|^2_{H^{\sigma}}
+\|F(u^{l,n})\|^2_{H^\sigma}.
\end{align*}
With Lemma \ref{Kato-Ponce commutator estimate} at hand, we consider the following two cases:
\begin{align*}
|N_4|\lesssim\, &\|u^{l,n}\|_{W^{\sigma,\frac{2d}{d-2}}}\|\nabla v\|_{L^d}\|v\|_{H^{\sigma}}
+\|\nabla u^{l,n}\|_{L^\infty}\|v\|^2_{H^{\sigma}}\lesssim
\|u^{l,n}\|_{H^s}\|v\|^2_{H^{\sigma}}\ \ \text{for even}\ d\geq 4,
\end{align*}
and
\begin{align*}
|N_4|\lesssim\, &\|u^{l,n}\|_{W^{\sigma,q}}\|\nabla v\|_{L^p}\|v\|_{H^{\sigma}}
+\|\nabla u^{l,n}\|_{L^\infty}\|v\|^2_{H^{\sigma}}
\ \ \text{for}\ d=2,
\end{align*}
where in the case $d=2$, $p$ will be chosen such that $\sigma-\frac{d}{2}=\sigma-1>1-\frac{2}{p}>0$ and $q$ is determined by $\frac{1}{2}=\frac{1}{q}+\frac{1}{p}$. We use
$H^s\hookrightarrow H^{\sigma+1}\hookrightarrow W^{\sigma,\frac{2d}{d-2}}$, $H^{\sigma}\hookrightarrow W^{1,d}$ for the case $d\geq4$ and use $H^{s}\hookrightarrow W^{\sigma+\frac{2}{q},q}\hookrightarrow W^{\sigma,q}$ and $H^{\sigma}\hookrightarrow W^{1,p}$ for the case $d=2$ to obtain
\begin{align*}
|N_4|\lesssim
\|u^{l,n}\|_{H^s}\|v\|^2_{H^{\sigma}}.
\end{align*}
Therefore we can infer from Lemma \ref{F-EP Lemma}, \eqref{error 2}, \eqref{u eta n stopping time} and \eqref{appro solution bounded} that
\begin{align*}
\E\int_0^{T\wedge\tau^R_{l,n}}\left(|N_2|+|N_5|+|N_6|\right){\rm d}t
\leq\, & CTn^{-2r_s}
+C_R\int_0^{T}\E
\sup_{t'\in[0,t\wedge\tau^R_{l,n}]}\|v(t')\|^2_{H^{\sigma}}{\rm d}t,
\end{align*}
and
\begin{align*}
\E\int_0^{T\wedge\tau^R_{l,n}}\left(|N_3|+|N_4|\right){\rm d}t
\leq C_R\int_0^{T}\E
\sup_{t'\in[0,t\wedge\tau^R_{l,n}]}\|v(t')\|^2_{H^{\sigma}}{\rm d}t.
\end{align*}
Over all, we arrive at
\begin{align*}
\E\sup_{t\in[0,T\wedge\tau^R_{l,n}]}\|v(t)\|^2_{H^{\sigma}}\leq CTn^{-2r_s}
+C_R\int_0^{T}\E
\sup_{t'\in[0,t\wedge\tau^R_{l,n}]}\|v(t')\|^2_{H^{\sigma}}{\rm d}t.
\end{align*}
Via the Gr\"{o}nwall inequality, we have
\begin{align*}
\E\sup_{t\in[0,T\wedge\tau^R_{l,n}]}\|v(t)\|^2_{H^{\sigma}}\leq Cn^{-2r_s},\ \ C=C(R,T),
\end{align*}
which is \eqref{v sigma norm}. For \eqref{v 2s-sigma norm}, we first note
that $u_{l,n}$ is the unique solution to \eqref{appro Cauchy problem} and $2s-\sigma>d/2+1$. For each fixed $n\ge1$, similarly, we use \eqref{u eta n stopping time} to find
\begin{align*}
\E\sup_{t\in[0,T\wedge\tau^R_{l,n}]}\|u_{l,n}(t)\|^2_{H^{2s-\sigma}}
\leq\, & 2\E\|u^{l,n}(0)\|^2_{H^{2s-\sigma}}+ C_{R,T}\int_0^{T}
\left(\E\sup_{t'\in[0,t\wedge\tau^R_{l,n}]}\|u(t')\|_{H^{2s-\sigma}}^2\right){\rm d}t.
\end{align*}
From the above estimate, we can use the Gr\"{o}nwall inequality and Lemma \ref{cos sin approximate estimate} to infer
\begin{align*}
\E\sup_{t\in[0,T\wedge\tau^R_{l,n}]}\|u_{l,n}(t)\|^2_{H^{2s-\sigma}}
\leq C\E\|u^{l,n}(0)\|^2_{H^{2s-\sigma}}
\leq C n^{2s-2\sigma},\ \ C=C(R,T).
\end{align*}
Then it follows from Lemma \ref{cos sin approximate estimate} that for some $C=C(R,T)$ and $l\in\{-1,1\}$,
\begin{align*}
\E\sup_{t\in[0,T\wedge\tau^R_{l,n}]}\|v\|^2_{H^{2s-\sigma}}
\leq C\E\sup_{t\in[0,T\wedge\tau^R_{l,n}]}\|u_{l,n}\|^2_{H^{2s-\sigma}}
+C\E\sup_{t\in[0,T\wedge\tau^R_{l,n}]}\|u^{l,n}\|^2_{H^{2s-\sigma}}
\leq C n^{2s-2\sigma},
\end{align*}
which is \eqref{v 2s-sigma norm}.
\end{proof}

\subsection{Proof for Theorem \ref{SEP non uniform}}

\begin{Lemma}\label{tau eta n lower bound}
Let $d\geq 2$ be even and $B(t,u)$ satisfy Hypothesis \ref{H-hk<F}. If for some $R_0\gg1$, the $R_0$-exiting time is strongly stable at the zero solution to \eqref{EP non stable Eq}, then for $l\in\{1,-1\}$, we have
\begin{align}
\lim_{n\rightarrow\infty} \tau^{R_0}_{l,n}=\infty\ \ \pas,\label{tau eta n lower>0}
\end{align}
where $\tau^{R_0}_{l,n}$ is given in \eqref{u eta n stopping time}.
\end{Lemma}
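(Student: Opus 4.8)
The plan is to recognize \eqref{tau eta n lower>0} as a direct consequence of the strong stability hypothesis in Definition \ref{Stability on exiting time}, once the relevant reference solution is identified and the initial data are shown to converge in the correct topology. First I would observe that, under Hypothesis \ref{H-3}, the zero function is the unique solution to \eqref{EP non stable Eq} issued from the zero datum: indeed $F(0)=0$, and taking $u=0$ in the first inequality of \eqref{assumption G} gives $\|Q(t,0)\|_{\LL_2(U;H^s)}\le\|F(0)\|_{H^s}=0$, so that $u\equiv0$ solves the equation with $u_0=0$. For this zero solution the $R_0$-exiting time is
\begin{equation*}
\tau^{R_0}=\inf\{t\ge0:\|0\|_{H^s}>R_0\}=\infty\quad \p\text{-a.s.},
\end{equation*}
since $R_0>0$. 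Thus the right-hand side of \eqref{tau eta n lower>0} is precisely $\tau^{R_0}$, and it only remains to verify the hypothesis that triggers strong stability.

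The key computation is to show that the deterministic initial data $u^{l,n}(0,x)=(ln^{-1}+n^{-s}\cos nx_{d+1-i})_{1\le i\le d}$ from \eqref{approximate solutions definition 1} converge to $0$ in $H^{s'}$ for every $s'<s$. Splitting into the constant and oscillatory parts, I would invoke Lemma \ref{cos sin approximate estimate}: the oscillatory contribution satisfies $\|n^{-s}\cos nx_{d+1-i}\|_{H^{s'}}\approx n^{-s}\cdot n^{s'}=n^{s'-s}$, while the constant part obeys $\|ln^{-1}\|_{H^{s'}}\lesssim n^{-1}$. Hence $\|u^{l,n}(0)\|_{H^{s'}}\to0$ as $n\to\infty$ for each fixed $s'<s$. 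Since the data are deterministic this convergence holds pointwise in $\omega$, hence almost surely, and each $u^{l,n}(0)$ is trivially $\mathcal{F}_0$-measurable with $\E\|u^{l,n}(0)\|^2_{H^s}<\infty$.

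With these two ingredients the conclusion is immediate: taking $u_{0,n}=u^{l,n}(0)$ and $u_0=0$ in Definition \ref{Stability on exiting time}, the strong stability of the $R_0$-exiting time at the zero solution converts the convergence $u^{l,n}(0)\to0$ in $H^{s'}$ (for all $s'<s$) into $\lim_{n\to\infty}\tau^{R_0}_{l,n}=\tau^{R_0}=\infty$ almost surely, which is exactly \eqref{tau eta n lower>0}. There is essentially no analytic obstacle here; the only point worth stressing is conceptual. The amplitude $n^{-s}$ is chosen precisely so that $u^{l,n}(0)$ converges to zero in every $H^{s'}$ with $s'<s$ but \emph{not} in $H^s$, where $\|n^{-s}\cos nx_{d+1-i}\|_{H^s}\approx1$. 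This is exactly why ordinary stability would be insufficient and why the \emph{strong} stability assumption is the natural hypothesis to invoke; it is the same mechanism that will be exploited to produce two converging-yet-separating sequences of solutions in the proof of Theorem \ref{SEP non uniform}.
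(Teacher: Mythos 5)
Your proof is correct and follows essentially the same route as the paper's: both arguments reduce \eqref{tau eta n lower>0} to the strong stability hypothesis by noting that the zero function is the unique solution from the zero datum (so its $R_0$-exiting time is $\infty$) and that $u^{l,n}(0)\rightarrow 0$ in $H^{s'}$ for every $s'<s$. Your write-up is in fact slightly more complete than the paper's, since you explicitly verify $Q(t,0)=0$ via \eqref{assumption G} and carry out the $H^{s'}$ norm computation with Lemma \ref{cos sin approximate estimate}, steps the paper leaves implicit.
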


\begin{proof}
Since $F(0)=0$, it is clear that zero is the unique solution to \eqref{EP non stable Eq} with zero initial data under Hypothesis \ref{H-hk<F}.
Due to \eqref{appro Cauchy problem}, it follows that
\begin{align*}
\lim_{n\rightarrow\infty} \|u_{l,n}(0)-0\|_{H^{s'}}=
\lim_{n\rightarrow\infty} \|u^{l,n}(0)\|_{H^{s'}}=0 \ \ \ \forall\ s'<s.
\end{align*}
note that the $R_0$-exiting time at the zero solution is $\infty$. Therefore we see that if the $R_0$-exiting time is strongly stable at the zero solution to \eqref{EP non stable Eq}, then \eqref{tau eta n lower>0} holds true.
\end{proof}

With the above result at our disposal, now we can prove Theorem \ref{SEP non uniform}.

\begin{proof}[Proof for Theorem \ref{SEP non uniform}]
Let us first consider the case $d\geq 2$ is even. We will show that, if the $R_0$-exiting time is strongly stable at the zero solution for some $R_0\gg1$, then $(u_{-1,n},\tau_{-1,n})$ and $(u_{1,n},\tau_{1,n})$ satisfy
\ref{NU depend 1}--\ref{NU depend 4} in Theorem \ref{SEP non uniform}.

\ref{NU depend 1} For each $n>1$, for $l\in\{1,-1\}$ and for the fixed $R_0\gg1$, Lemma \ref{cos sin approximate estimate} and \eqref{u eta n stopping time} give us $\p\{\tau_{l,n}^{R_0}>0\}=1$ and Lemma \ref{tau eta n lower bound} implies \ref{NU depend 1}.

\ref{NU depend 2}
Besides, Theorem \ref{Thm-EP} and \eqref{u eta n stopping time} show that $u_{l,n}\in C([0,\tau_{l,n}];H^s)$ $\pas$ and
$$
\sup_{t\in[0,\tau_{l,n}^{R_0}]}\|u_{l,n}\|_{H^{s}}\leq R_0\ \ \pas,
$$
which gives \ref{NU depend 2}.

\ref{NU depend 3} Since $u^{-1,n}(0)$ and $u^{1,n}(0)$ are deterministic and
$$
\|u_{-1,n}(0)-u_{1,n}(0)\|_{H^{s}}=\|u^{-1,n}(0)-u^{1,n}(0)\|_{H^{s}}\lesssim n^{-1},
$$
we obtain \ref{NU depend 3} holds.

\ref{NU depend 4}
For any $T>0$, using the interpolation inequality and Lemma \ref{v eta n estimate lemma}, we see that for $l\in\{-1,1\}$ and $v=v^{l,n}=u^{l,n}-u_{l,n}$,
\begin{align*}
\left(\E\sup_{t\in[0,T\wedge\tau_{l,n}^{R_0}]}
\|v\|_{H^{s}}\right)^2
\leq\,&\left(\E\sup_{t\in[0,T\wedge\tau_{l,n}^{R_0}]}
\|v\|^2_{H^{\sigma}}\right)^{\frac12}
\left(\E\sup_{t\in[0,T\wedge\tau_{l,n}^{R_0}]}
\|v\|^2_{H^{2s-\sigma}}\right)^{\frac12}
\lesssim\,  n^{-r_{s}+(s-\sigma)}.
\end{align*}
It follows from
\begin{equation*}
0>-r_{s}+s-\sigma=\begin{cases}
1-s \ \ \ & {\rm if} \ 1+\frac{d}{2}< s\leq 3,\\
-2 \ \ \ & {\rm if} \ s>3,
\end{cases}
\end{equation*}
that for $l\in\{-1,1\}$,
\begin{equation}
\lim_{n\rightarrow\infty}
\E\sup_{t\in[0,T\wedge\tau_{l,n}^{R_0}]}\|u^{l,n}-u_{l,n}\|_{H^{s}}
=0.\label{difference tends to 0}
\end{equation}
For any given $T>0$, on account of \eqref{difference tends to 0}, Lemmas \ref{cos sin approximate estimate} and \ref{tau eta n lower bound},
we have
\begin{align*}
&\liminf_{n\rightarrow \infty}
\E\sup_{t\in[0,T\wedge\tau_{-1,n}^{R_0}\wedge\tau_{1,n}^{R_0}]}
\|u_{-1,n}(t)-u_{1,n}(t)\|_{H^s}\notag\\
\gtrsim \, &\liminf_{n\rightarrow \infty}
\E\sup_{t\in[0,T\wedge\tau_{-1,n}^{R_0}\wedge\tau_{1,n}^{R_0}]}
\|u^{-1,n}(t)-u^{1,n}(t)\|_{H^s}\notag\\
\gtrsim \, &\liminf_{n\rightarrow \infty}
\E\sup_{t\in[0,T\wedge\tau_{-1,n}^{R_0}\wedge\tau_{1,n}^{R_0}]}
\left\|n^{-s}\cos\big(2\pi n x_{d+1-i}+{2\pi}t\big)
-n^{-s}\cos\big(2\pi nx_{d+1-i}-{2\pi}t\big)\right\|_{H^s}\nonumber\\
\gtrsim \, &\liminf_{n\rightarrow \infty}
\E\sup_{t\in[0,T\wedge\tau_{-1,n}^{R_0}\wedge\tau_{1,n}^{R_0}]}
\left(n^{-s}\|\sin 2\pi n x_{d+1-i}\|_{H^s}|\sin {2\pi} t|-\|2n^{-1}\|_{H^s}\right).
\end{align*}
Using the Fatou's lemma, we arrive at
\begin{align*}
\liminf_{n\rightarrow \infty}
\E\sup_{t\in[0,T\wedge\tau_{-1,n}^{R_0}\wedge\tau_{1,n}^{R_0}]}
\|u_{-1,n}(t)-u_{1,n}(t)\|_{H^s}
\gtrsim \, & \sup_{t\in[0,T]}|\sin  {2\pi} t| ,
\end{align*}
which implies \ref{NU depend 4}.

Now we consider the case that $d\geq 3$ is odd. Instead of \eqref{approximate solutions definition 1}, we define the following divergence--free vector field as
\begin{align*}
u^{l, n}=(l n^{-1}+n^{-s}\cos\theta_{1},\, l n^{-1}+n^{-s}\cos\theta_{2},\,\cdots,\, ln^{-1}+n^{-s}\cos\theta_{d-1},\,0),
\end{align*}
where $\theta_{i}=2\pi (nx_{d-i}-l t)$ with $1\leq i\leq d-1$, $n\ge1$, $l\in\{-1,1\}$. In this case, $d-1$ is even and we can repeat the proof for Lemma \ref{error estimate lemma} to find that the error $\mathcal{E}^{l,n}(t)$ also enjoys \eqref{error estimate d even}. Moreover,
for the pathwise solutions $u_{l,n}$ to \eqref{appro Cauchy problem} with $$u_{l,n}(0)=u^{l, n}(0)=\left(l n^{-1}+n^{-s}\cos 2\pi nx_{d-i},\ 0\right)_{1\leq i\leq d-1},$$
we can basically repeat the previous procedure to show that
Lemmas \ref{v eta n estimate lemma} and \ref{tau eta n lower bound} also hold true. Therefore one can establish \ref{NU depend 1}--\ref{NU depend 4} for $u_{l,n}$ similarly.

In conclusion, we see that if for some $R_0\gg1$, the $R_0$-exiting time is strongly stable at the zero solution, then the solution map defined by \eqref{EP non stable Eq} is not uniformly continuous when $B(t,\cdot)$ satisfies Hypothesis \ref{H-hk<F}.
\end{proof}

\begin{Remark}\label{SEP non uniform remark}
From the above proof for Theorem \ref{SEP non uniform}, it is clear that if $d=1$, one can use
\begin{equation*}
u^{l, n}=l n^{-1}+n^{-s}\cos 2\pi(n x-  l t),\ n\geq 1
\end{equation*}
as a sequence of approximation solutions and repeat the other part of the proof correspondingly to obtain the similar statements in $d=1$. Therefore Theorem \ref{SEP non uniform} also holds true for $d=1$, namely the stochastic CH equation case.
\end{Remark}

\section*{Declaration} The author declare that data sharing is not applicable to this article  since no datasets were generated or analyzed during the current study.

\section*{Acknowledgement}
The author would also like to record his indebtedness to Professor Feng-Yu Wang and Professor Christian Rohde. 


\appendix


\section{Auxiliary results}\label{Section:Appendix}
In this appendix we recall and establish some auxiliary results from analysis employed in the proofs above.
We begin with introducing mollifiers. For $n\ge1$, we define the Friedrichs mollifier $J_n$ as 
\begin{align}
J_n :=\OP\big(j(\cdot/n)\big),\ \ n\ge1, \label{Define Jn}
\end{align}
where $j\in\mathscr{S}(\R^d;\R)$ (the Schwarz space of rapidly decreasing $C^\infty$ functions on $\R^d$) satisfies $0\leq j(y)\leq1$ for all $y\in \R^d$ and $j(y)=1$ for any $|y|\leq1$.  

\begin{Lemma}[\cite{Li-Liu-Tang-2021-SPA,Tang-2018-SIMA,Taylor-1991-book}]\label{Lemma-Je}
The following properties for $J_n$ hold true:
\begin{align*}
\|\I-J_n\|_{\LL(H^s;H^r)}\lesssim\,& \frac{1}{n^{s-r}},\ \ r< s,\\
\|J_n\|_{\LL(H^s;H^r)}\sim\,& O(n^{r-s}),\ \ r> s,
\end{align*}
and
\begin{align*}
[\D^s,J_n]=0,\ \ 
\IP{J_nf, g}_{L^2}=\IP{f, J_ng}_{L^2},\ \ 
\|J_n\|_{\LL(L^\infty;L^\infty)}\lesssim1,\ \
\|J_n\|_{\LL(H^s;H^s)}\leq 1,\ \ n\ge1,\ s\ge0.
\end{align*}
\end{Lemma}

\begin{Lemma}[Page 3 in \cite{Taylor-2011-PDEbook3}]\label{Te commutator} 
Let $d\geq1$ and $f,g:\K^d\rightarrow\R^d$ such that $g\in W^{1,\infty}$ and $f\in L^2$. Then for some $C=C(d)>0$,
\begin{align*}
\|[J_n, (g\cdot \nabla)]f\|_{L^2}
\leq C\|g\|_{\Wlip}\|f\|_{L^2},\ \ n\ge1.
\end{align*}
\end{Lemma}

Then we recall some estimates in Sobolev spaces $H^s$.

\begin{Lemma}[\cite{Danchin-2001-DIE,Bahouri-Chemin-Danchin-2011-book}]
\label{Lemma:product in Hs} 
For any $s>0$, and $s_1,s_2\in \R$ with $s_1+s_2>0$ and $s_1<\frac{d}{2} <s_2$, 
\begin{align*} 
&\|fg\|_{H^{s_1}}\lesssim \|f\|_{H^{s_1}}\|g\|_{H^{s_2}},\ \ f\in H^{s_1}, g\in H^{s_2}.
\end{align*}
\end{Lemma}

\begin{Lemma}[\cite{Kato-Ponce-1988-CPAM,Kenig-Ponce-Vega-1991-JAMS}]\label{Kato-Ponce commutator estimate}
If $f,g\in H^s\bigcap W^{1,\infty}$ with $s>0$, then for $p,p_i\in(1,\infty)$ with $i=2,3$ and
$\frac{1}{p}=\frac{1}{p_1}+\frac{1}{p_2}=\frac{1}{p_3}+\frac{1}{p_4}$, we have
$$\|\left[\D^s,f\I\right]g\|_{L^p}\leq C_s(\|\nabla f\|_{L^{p_1}}\|\D^{s-1}g\|_{L^{p_2}}+\|\D^sf\|_{L^{p_3}}\|g\|_{L^{p_4}}),$$
and $$\|\D^s(fg)\|_{L^p}\leq C_s(\|f\|_{L^{p_1}}\|\D^s g\|_{L^{p_2}}+\|\D^s f\|_{L^{p_3}}\|g\|_{L^{p_4}}).$$
\end{Lemma}


We also recall the following
\begin{Lemma}[\cite{Tang-Liu-2015-ZAMP,Zhao-Yang-Li-2018-JMAA}]\label{cos sin approximate estimate}
Let $\sigma, \alpha\in\R$. If $n\gg 1$, then
\begin{align*}
\|\sin(n (2\pi x)-\alpha)\|_{H^\sigma(\T;\R)}, \ 
\|\cos(n (2\pi x)-\alpha)\|_{H^\sigma(\T;\R)},\ 
\|\cos(n (2\pi x)-\alpha)\sin(n (2\pi y)-\alpha)\|_{H^\sigma(\T^2;\R)}
\approx n^{\sigma}. 
\end{align*}
\end{Lemma}

The following lemmas with single $\P$ and a pair $(\P_1,\P_2)$ on $\R^d$ are well known in the literature, and they can be easily extended to the case on $\T^d$ (cf. see for example \cite[Theorem 4.5.3, Corollaries 4.5.7 and 4.6.13]{Ruzhansky-Turunen-2010-Book} and  \cite{McLean-1991-MN}). 

\begin{Lemma}[\cite{Taylor-1974-note,Taylor-1991-book,Benzoni-Gavage-Serre-2007-Book}]\label{LOP} Let $\P\in \OP\SS^{s}$, $s\in \R$. Then $\P^*\in\OP\SS^{s}$, and $\P\in \LL(H^q;H^{q-s})$ for any $q\in\R$.  For any $\P_i\in\OP\SS^{r_i}, r_i\in \R, i=1,2$, 
$$\P_1\P_2\in \OP\SS^{r_1+r_2}.$$
Moreover, if their symbols are commuting matrices, then
$$[\P_1,\P_2]\in \OP\SS^{r_1+r_2-1}.$$ \end{Lemma} 


\begin{Lemma}[Proposition 4.2 in \cite{Taylor-2003-PAMS}]\label{commutator:Taylor 2}
Let $\P \in \OP\S^{r} $ with $r \geq 0$. For any $\sigma>1+\frac{d}{2}$ and $q\in [0, \sigma-r]$, 
$$
\left\|\left[\P, g\I\right] u\right\|_{H^{q}} \lesssim \|g\|_{H^{\sigma}}\|u\|_{H^{q+r-1}},\ \ g\in H^{\sigma},\ u\in H^{q+r-1}.
$$

\end{Lemma}

\begin{Lemma}\label{Lemma:(pn qm) [P Q]}
Let $r_1,r_2\in\R$. Assume that $\{(\mathfrak{p}_n,\mathfrak{q}_k)\}_{n,k\ge1}\subset \SS^{r_1}\times \SS^{r_2}$ is bounded. If for all $n,k\ge1$, $\mathfrak{p}_n$ and $\mathfrak{q}_k$ are commuting matrices,
then 
\begin{equation*} 
\sup_{n,k}\|\left[{\rm OP}(\mathfrak{p}_n),{\rm OP}(\mathfrak{q}_k)\right]\|_{\LL(H^{r_1+r_2-1};L^2)}<\infty.
\end{equation*}
\end{Lemma}

\begin{proof}
Since $\SS^{r_1}\times \SS^{r_2}$ is a Fr\'{e}chet space, it suffices to show that the mapping $(\mathfrak{p},\mathfrak{q})\to \left[{\rm OP}(\mathfrak{p}),{\rm OP}(\mathfrak{q})\right]$ is bilinear and continuous from $\SS^{r_1}\times \SS^{r_2}$ to $\LL(H^{r_1+r_2-1};L^2)$. Bilinearity is obvious, and now we prove the continuity. To this end, we denote by $\mathfrak{p}_1\# \mathfrak{p}_2$  the symbol of the operator product ${\rm OP}(\mathfrak{p}_1){\rm OP}(\mathfrak{p}_2)$, i.e., ${\rm OP}(\mathfrak{p}_1){\rm OP}(\mathfrak{p}_2)={\rm OP}(\mathfrak{p}_1\#\mathfrak{p}_2)$. 

On one hand, for two symbols $\mathfrak{p}_1$ and $\mathfrak{p}_2$, it is well-known that the mapping $\SS^{r_1}\times \SS^{r_2}\ni (\mathfrak{p}_1,\mathfrak{p}_2)\mapsto \mathfrak{p}_1\# \mathfrak{p}_2\in \SS^{r_1+r_2}$ is continuous (cf. \cite[Theorem 1.2.16]{Nicola-Rodino-2010-book}, \cite[Page 72]{Abels-2012-Book}).

On the other hand, when $\mathfrak{p}_1$ and $\mathfrak{p}_2$ are commuting matrices,  some direct computations (cf. \cite[Corollary 4.1]{Alinhac-Gerard-2007-Book} or \cite[Theorem C.3]{Benzoni-Gavage-Serre-2007-Book}) yield
$\mathfrak{p}_1\# \mathfrak{p}_2-\mathfrak{p}_2\# \mathfrak{p}_1\in \SS^{r_1+r_2-1}$. 

Therefore, the mapping
\begin{align*}
\textbf{T}:\SS^{r_1}\times \SS^{r_2}\ni (\mathfrak{p}_1,\mathfrak{p}_2)\mapsto \textbf{T}(\mathfrak{p}_1,\mathfrak{p}_2):=\mathfrak{p}_1\# \mathfrak{p}_2- \mathfrak{p}_2\# \mathfrak{p}_1\in \SS^{r_1+r_2-1}
\end{align*} 
is continuous, which together with \eqref{OP continuous} implies that
$(\mathfrak{p},\mathfrak{q})\to \left[{\rm OP}(\mathfrak{p}),{\rm OP}(\mathfrak{q})\right]={\rm OP}(\textbf{T}(\mathfrak{p},\mathfrak{q}))$ is continuous from $\SS^{r_1}\times \SS^{r_2}$ to $\LL(H^{r_1+r_2-1};L^2)$. 
\end{proof}


\begin{Lemma}\label{commutator:Taylor 2-n}
Let $r \geq 0$, $\sigma>d/2+1$ and $q \in[0,\sigma-r]$. 
If $\{\mathfrak{p}_n\}_{n\ge1}\subset\S^{r}$ is bounded and $g\in H^{\sigma}$, then there is a constant $C>0$ independent of $n$ such that
\begin{equation*}
\|[{\rm OP}(\mathfrak{p}_n),g\I] u\|_{H^{q}} \leq C\|g\|_{H^{\sigma}}\|u\|_{H^{q+r-1}},\ \ u\in H^{q+r-1}.
\end{equation*} 
\end{Lemma}
\begin{proof}

To begin with, we define a sequence of linear operator $\mathfrak{T}:\OP\S^{s}\ni \P\mapsto [\P,g\I]$. Then Lemma \ref{commutator:Taylor 2} shows
\begin{equation*}
\|[\mathfrak{T}(\P)]u\|_{H^q}\leq
C\|g\|_{H^{\sigma}}\|u\|_{H^{q+r-1}},\ \ C=C(\P),
\end{equation*}
which means 
\begin{equation*}
\|\mathfrak{T}(\P)\|_{\LL(H^{q+r-1};H^{q})}
=\sup_{\|u\|_{H^{q+r-1}}\ne 0}\frac{\|[\mathfrak{T}(\P)]u\|_{H^q}}{\|u\|_{H^{q+r-1}}}
\leq C \|g\|_{H^{\sigma}}, \ \ C=C(\P).
\end{equation*}
The above estimate and \eqref{OP continuous} imply that under the above assumptions for $r,\sigma$ and $q$,
\begin{equation*}
\mathfrak{T}{\rm OP}:\S^r\to 
\LL(H^{q+r-1};H^{q})\ \text{is continuous},
\end{equation*}
which implies the desired estimate.
\end{proof}

%
%
%

\end{document}